\crefname{equation}{}{}
\pgfplotsset{compat=1.12}
\numberwithin{equation}{section}
\newtheorem{theorem}{Theorem}[section]
\newtheorem{proposition}[theorem]{Proposition}
\newtheorem{lemma}[theorem]{Lemma}
\crefname{claim}{Claim}{Claims}
\newtheorem*{question*}{Question}
\theoremstyle{definition}
\newtheorem{definition}[theorem]{Definition}
\newtheorem*{definition*}{Definition}
\newtheorem{fact}[theorem]{Fact}
\newtheorem*{fact*}{Fact}
\crefname{fact}{Fact}{Facts}
\theoremstyle{remark}
\newcommand{\R}{\mathbb{R}}
\renewcommand{\Pr}{\mathbb{P}}
\newcommand{\E}{\mathbb{E}}
\def\calE{\mathcal{E}}
\def\calF{\mathcal{F}}
\def\calS{\mathcal{S}}
\def\epsilon{\varepsilon}
\newcommand\xx{\boldsymbol{\mathit{x}}}
\let\originalleft\left
\let\originalright\right
\renewcommand{\left}{\mathopen{}\mathclose\bgroup\originalleft}
\renewcommand{\right}{\aftergroup\egroup\originalright}
\definecolor{ForestGreen}{rgb}{0.0333,0.4451,0.0333}
\definecolor{DarkRed}{rgb}{0.65,0,0}
\definecolor{NavyBlue}{rgb}{0,0,0.5}
\title{Counting Perfect Matchings in Dirac Hypergraphs}
\author[Kwan]{Matthew Kwan}
\address{Institute of Science and Technology Austria (ISTA).}
\email{matthew.kwan@ist.ac.at}
\author[Safavi]{Roodabeh Safavi}
\address{Institute of Science and Technology Austria (ISTA)}
\email{roodabeh.safavi@ist.ac.at}
\author[Wang]{Yiting Wang}
\address{Institute of Science and Technology Austria (ISTA).}
\email{yiting.wang@ist.ac.at}
\thanks{
This project has received funding from the European Research Council (ERC), via grant agreements ``MoDynStruct'' No.\ 101019564 and ``RANDSTRUCT'' No.\ 101076777. In particular, the former grant agreement is under the European Union's Horizon 2020 research and innovation programme \includegraphics[width=4.5mm, height=3mm]{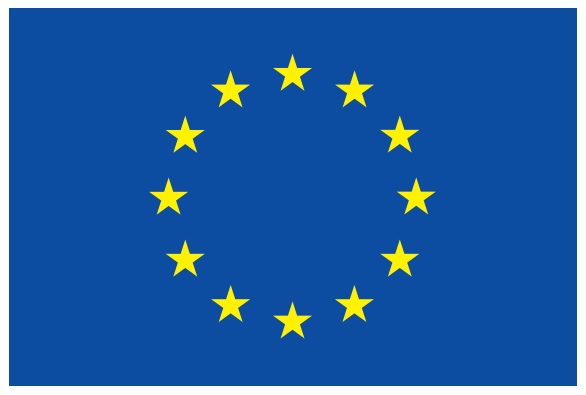}. This work was further supported by the Austrian Science Fund (FWF) and netIDEE SCIENCE project P 33775-N.
For Open Access purposes, the authors have applied a CC BY public copyright license to any author
accepted manuscript version arising from this submission.}
\begin{document}

\begin{abstract}
One of the foundational theorems of extremal graph theory is\emph{
Dirac's theorem}, which says that if an $n$-vertex graph $G$ has
minimum degree at least $n/2$,
then $G$ has a Hamilton cycle, and therefore a perfect matching (if
$n$ is even). Later work by S\'arkozy, Selkow and Szemer\'edi showed
that in fact Dirac graphs have \emph{many} Hamilton cycles and perfect
matchings, culminating in a result of Cuckler and Kahn that gives
a precise description of the numbers of Hamilton cycles and perfect
matchings in a Dirac graph $G$ (in terms of an entropy-like parameter
of $G$).

In this paper we extend Cuckler and Kahn's result to perfect matchings
in hypergraphs. For positive integers $d<k$, and for $n$ divisible
by $k$, let $m_{d}(k,n)$ be the minimum $d$-degree that ensures
the existence of a perfect matching in an $n$-vertex $k$-uniform hypergraph.
In general, it is an open question to determine (even asymptotically)
the values of $m_{d}(k,n)$, but we are nonetheless able to prove
an analogue of the Cuckler--Kahn theorem, showing that if an
$n$-vertex $k$-uniform hypergraph $G$ has minimum $d$-degree at least
$(1+\gamma)m_{d}(k,n)$ (for any constant $\gamma>0$), then the number
of perfect matchings in $G$ is controlled by an entropy-like parameter
of $G$. This strengthens cruder estimates arising from work of Kang--Kelly--K\"uhn--Osthus--Pfenninger and Pham--Sah--Sawhney--Simkin. 
\end{abstract}
\maketitle

\section{Introduction}

\subsection{Counting perfect matching in Dirac graphs}
\emph{Dirac's theorem} is one of the foundational theorems in extremal
graph theory. It says that if an $n$-vertex graph $G$ has minimum
degree at least $n/2$, then $G$ has a Hamilton cycle, and therefore
has a perfect matching if $n$ is even (taking every second edge of the
Hamilton cycle). A huge number of variants and extensions of Dirac's
theorem have been considered over the years; in particular, one important
direction of research (seemingly first proposed by Bondy~\cite[p.~79]{Bondy-book})
is to \emph{count} Hamilton cycles and perfect matchings, given the
same minimum degree condition. As one of the first applications of
the so-called \emph{regularity method}, S\'arkozy, Selkow and Szemer\'edi~\cite{SSS}
resolved this problem up to an $\exp(O(n))$ error term: every graph
on $n$ vertices with minimum degree at least $n/2$ has at least $n^{n}\exp(-O(n))$
Hamilton cycles and at least $n^{n/2}\exp(-O(n))$ perfect matchings.

A much sharper estimate was later proved in influential work of Cuckler
and Kahn~\cite{CK-entropy,CK-counting}: they managed to resolve this problem up to an
$\exp(o(n))$ error term, characterising the number of Hamilton cycles and perfect matchings
in terms of a parameter they called \emph{graph entropy}. We state the perfect matching case of their result, after introducing some notation and definitions.

\begin{definition}[Fractional perfect matchings]\label{def:fractional-PM}
A \emph{fractional perfect matching} $\xx$ of $G$ is an assignment of a weight $\xx[e] \in [0,1]$ to each edge of $G$, such that $\sum_{e\ni v}\xx[e] = 1$ for each vertex $v$ in $G$.
\end{definition}

\begin{definition}[Graph entropy]\label{def:graph-entropy}
    For a fractional perfect matching $\xx$ of a graph $G$, the \emph{entropy} of $\xx$ is defined as
    \[
    h(\xx) \coloneqq \sum_{e\in E(G)}\xx[e] \ln(1/\xx[e])\,,
    \]
    where we use the convention $0\ln (1/0)=0$. The entropy of $G$ is defined as
    \[
    h(G) \coloneqq \sup_{\xx} h(\xx)\,,
    \]
    where the supremum is taken over all fractional perfect matchings in $G$. 
\end{definition}
\begin{theorem}[\cite{ CK-entropy,CK-counting}]\label{thm:CK-1}
    Let $G$ be an $n$-vertex graph with minimum degree at least $n/2$. Then the number of perfect matchings in $G$ is
    \[
        \mathrm{e}^{h(G)}(1/\mathrm{e}+o(1))^{n/2},
    \]
    where asymptotics are as $n\to \infty$ (along a sequence of even integers).
\end{theorem}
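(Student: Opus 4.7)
I would prove matching upper and lower bounds of the form $\log N(G) = h(G) - n/2 \pm o(n)$, where $N(G)$ denotes the number of perfect matchings of $G$. The upper bound is a Br\'egman / Kahn--Lov\'asz-type entropy inequality that does not use the Dirac condition; the lower bound is the substantial part, and is where $\delta(G)\ge n/2$ enters.

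\textbf{Upper bound.} The plan is to prove the pointwise inequality $\log N(G)\le h(\xx)-n/2$ for every fractional PM $\xx$ of $G$, which upon maximizing over $\xx$ yields $\log N(G)\le h(G)-n/2$. Let $M$ be a uniformly random PM, so $\log N(G)=H(M)$, and let $p_{e}:=\Pr[e\in M]$; the marginals $(p_e)$ form a fractional PM of $G$. Reveal $M$ by processing the vertices in a uniformly random order and apply the chain rule for entropy. A simple subadditivity bound already gives $H(M)\le h(p)\le h(G)$; to obtain the sharper $-n/2$ term one applies the conditional-entropy argument at the heart of Br\'egman's theorem and its non-bipartite generalization due to Kahn and Lov\'asz: at a ``new'' vertex $v$ (whose matching partner has not yet been revealed) the conditional entropy of $e_{v}$ is at most $H(e_v)-1+o(1)$. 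Since exactly half of the vertices (namely, the earlier endpoint of each matching edge) are ``new'', the cumulative entropy loss totals $n/2$.

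\textbf{Lower bound.} For the lower bound, let $\xx^{\ast}$ be a maximum-entropy fractional PM of $G$, so $h(\xx^{\ast})=h(G)$. The goal is to produce a random PM $M^{\ast}$ with $H(M^{\ast})\ge h(G)-n/2-o(n)$, so that $\log N(G)\ge H(M^{\ast})$ gives the matching lower bound. Following the Cuckler--Kahn strategy, the natural implementation is to analyze a ``switch''/alternating-cycle Markov chain on the set of PMs and prove rapid mixing via a canonical-paths / multicommodity-flow argument; the Dirac condition then guarantees Hall-type expansion in every residual subgraph encountered along a canonical path, which translates into low congestion and hence a spectral gap. An alternative, more direct route is a vertex-by-vertex randomized construction, guided at each step by a maximum-entropy fractional PM of the subgraph induced on the currently unmatched vertices.

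\textbf{Main obstacle.} Either implementation hinges on the same \emph{robustness} statement: after an arbitrary partial matching on a constant fraction of $V(G)$ has been removed, the subgraph induced on the remaining vertices still admits a fractional PM of essentially the same entropy density as $G$. Establishing this is the chief technical obstacle, and is precisely where $\delta(G)\ge n/2$ does essential work, via Hall-type estimates on the fractional matching polytope. It is also this robustness property that is the most delicate to generalize to the hypergraph setting, which is consistent with the main theorem of the paper requiring a $(1+\gamma)$-factor slack above the threshold degree $m_{d}(k,n)$.
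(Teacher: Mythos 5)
This theorem is quoted from~\cite{CK-entropy,CK-counting}; the paper does not reprove it, but its proof of the hypergraph generalization (\cref{thm:num-of-PM}) follows the same template, so I will compare against that. For the upper bound your intent (a Kahn--Lov\'asz/Br\'egman-style conditional-entropy argument, which is indeed what \cite{CK-entropy}, and \cite{Kahn-Shamir} for hypergraphs, do) is correct, but the stated logic is backwards: if $\log N(G)\le h(\xx)-n/2$ held for \emph{every} fractional PM $\xx$, maximizing over $\xx$ would not give an upper bound on $\log N(G)$ — it is minimizing that would — and the pointwise claim is in any case false for a generic $\xx$. The correct chain is to prove $H(M)\le h(\pp)-n/2+o(n)$ only for the marginal vector $\pp[e]=\Pr[e\in M]$ of a \emph{uniform} random PM $M$, and then use $h(\pp)\le h(G)$; no maximization over $\xx$ is ever performed.

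For the lower bound, your primary proposal (an alternating-cycle Markov chain analyzed via canonical paths) is not the Cuckler--Kahn approach and does not obviously yield the needed $H(M^\ast)\ge h(G)-n/2-o(n)$ bound; your ``alternative'' route is much closer, but it mischaracterizes the construction in a way that changes what must be proved. The process is guided by a single \emph{fixed} fractional PM $\xx$, chosen up front to be simultaneously near-maximum-entropy and $O(1)$-well-distributed, with edges sampled from the residual graph with probability proportional to $\xx[e]$ — not by re-solving an entropy maximization on each residual subgraph. Consequently your ``main obstacle'' (robustness of the max-entropy value on arbitrary residual subgraphs) is not what the argument requires: what is actually needed is (i) the existence of a well-distributed near-optimal $\xx$ — for graphs this follows from a weight-shifting lemma on the optimizer $\xx^\star$ in \cite{CK-counting}, and the paper's stated chief difficulty in the hypergraph case is precisely that this weight-shifting argument breaks down, which is what the annealing technique fixes — together with (ii) martingale concentration showing the tracked statistics (total $\xx$-weight, $\sum \xx[e]\ln(1/\xx[e])$, and residual degrees) stay on trajectory, so that the greedy phase runs to near-completion and the residual graph still exceeds the Dirac threshold so the matching can be finished. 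Condition (ii) is both weaker and of a different character than the robustness statement you propose, and it is a concentration claim about a fixed $\xx$ under a random process rather than an existential claim about every residual subgraph.
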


Note that $h(G)$, as the solution to a convex optimisation problem, can be efficiently computed.

In addition to proving \cref{thm:CK-1}, Cuckler and Kahn also studied the minimum possible value of $h(G)$ among all graphs with specified minimum degree, and as a result they managed to deduce the following rather appealing theorem: for a given minimum degree (exceeding $n/2$), a \emph{random} graph of the appropriate density has essentially the minimum possible number of perfect matchings.

\begin{theorem}[\cite{CK-entropy, CK-counting}] \label{thm:CK-2}
Let $G$ be an $n$-vertex graph with minimum degree at least $pn$, for some $p\in [1/2,1]$. Then the number of perfect matchings in $G$ is at least 
    \[
        \Phi(K_n)\cdot (p+o(1))^{n/2},
    \]
    where $\Phi(K_n)=n!/((n/2)! 2^{n/2})$ is the number of perfect matchings in the complete graph on $n$ vertices. (Here asymptotics are as $n\to \infty$, along a sequence of even integers).
\end{theorem}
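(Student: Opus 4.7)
The plan is to reduce \cref{thm:CK-2} to \cref{thm:CK-1} by establishing a sufficiently strong lower bound on $h(G)$. A direct computation gives $h(K_n)=(n/2)\ln(n-1)$: the uniform assignment $\xx[e]=1/(n-1)$ is a fractional perfect matching of $K_n$ and is entropy-maximising by the strict concavity of $h(\cdot)$ together with the edge-transitivity of $K_n$. Applying \cref{thm:CK-1} to both $G$ and $K_n$ and taking the ratio, the task reduces to proving the entropy lower bound
\[
    h(G)\ \ge\ h(K_n) + \tfrac{n}{2}\ln p + o(n)\ =\ \tfrac{n}{2}\ln(pn) + o(n).
\]

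The crux is the following auxiliary lemma: \emph{every $n$-vertex graph $G$ with minimum degree at least $\delta\ge n/2$ admits a fractional perfect matching $\xx^*$ with $\xx^*[e]\le 1/\delta$ for every edge~$e$.} Granted this (applied with $\delta=\delta(G)\ge pn$), a one-line calculation finishes the reduction:
\[
    h(G)\ \ge\ h(\xx^*)\ =\ \sum_{e\in E(G)}\xx^*[e]\ln\frac{1}{\xx^*[e]}\ \ge\ \ln\delta(G)\sum_{e\in E(G)}\xx^*[e]\ =\ \tfrac{n}{2}\ln\delta(G)\ \ge\ \tfrac{n}{2}\ln(pn),
\]
using the uniform bound $\xx^*[e]\le 1/\delta(G)$ in the middle inequality and $\sum_{e}\xx^*[e]=\tfrac12\sum_v\sum_{e\ni v}\xx^*[e]=n/2$ in the subsequent equality.

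To prove the lemma, I would set $w_e:=\delta\cdot\xx^*[e]$ and rephrase it as the existence of a fractional $\delta$-regular edge-weighting of $G$: a $w\in[0,1]^{E(G)}$ with $\sum_{e\ni v}w_e=\delta$ for every~$v\in V(G)$. This is a fractional $b$-factor LP whose feasibility, by LP duality (Farkas), can be characterised via a Hall-type inequality. In our constant-target, unit-capacity setting the condition reduces to $\delta|S|\le \sum_{v\in S}d_G(v)$ for every $S\subseteq V(G)$, which is immediate from the minimum degree hypothesis $\delta(G)\ge\delta$.

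The main obstacle is the auxiliary lemma: although the underlying LP duality is standard, verifying that the Hall-type indicator condition really certifies feasibility against all real-valued dual vectors (and not merely indicator vectors) requires a short but non-trivial reduction, which could alternatively be packaged by appealing to the classical Hakimi/Lov\'asz theory of fractional $f$-factors. Once the lemma is in place, the rest of the argument is the short entropy calculation above combined with a black-box invocation of \cref{thm:CK-1}.
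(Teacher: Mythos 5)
Your overall strategy---reducing \cref{thm:CK-2} to the entropy lower bound $h(G)\ge\tfrac n2\ln(pn)+o(n)$ and then applying \cref{thm:CK-1}---is correct and is essentially the route Cuckler and Kahn take (and the one this paper follows in \cref{section:lower-bound-for-upper range} for the hypergraph analogue). But the auxiliary lemma on which your reduction rests is false, and the LP-duality argument you sketch for it contains the error that makes it look plausible.

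\textbf{The auxiliary lemma is false.} Take $n=6$ and let $G$ be the union of two copies of $K_4$ sharing exactly two vertices, say on vertex sets $\{1,2,3,4\}$ and $\{1,2,5,6\}$ (equivalently, $K_6$ minus the four edges between $\{3,4\}$ and $\{5,6\}$). Then $\delta(G)=3=n/2$, so $\delta=3$. A fractional perfect matching with all weights at most $1/\delta=1/3$ is the same as an edge-weighting $w\in[0,1]^{E(G)}$ with $\sum_{e\ni v}w_e=3$ at every vertex. At each of $3,4,5,6$ the degree equals exactly $3$, so every incident edge is forced to have $w_e=1$; but then at vertex $1$ we get $w_{12}+w_{13}+w_{14}+w_{15}+w_{16}=w_{12}+4=3$, forcing $w_{12}=-1<0$. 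So no such $w$ exists. (The construction generalises: two copies of $K_{m+2}$ glued along a $K_2$ gives $n=2m+2$, $\delta(G)=n/2$, and the same contradiction for every $m\ge2$.) The lemma therefore fails for graphs meeting the hypothesis of \cref{thm:CK-2} with $p=1/2$.

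\textbf{Why the proposed duality check is insufficient.} The correct Farkas condition for feasibility of $\{Bw=\delta\mathbf 1,\ 0\le w\le 1\}$ (with $B$ the incidence matrix) is that for \emph{every} $y\in\R^{V}$ one has $\delta\sum_v y_v\ge\sum_{uv\in E}\min(0,y_u+y_v)$. Restricting to $y=-\mathbf 1_S$ yields exactly your condition $\delta|S|\le\sum_{v\in S}d_G(v)$; but the dual vectors one must check are not only indicator vectors (there is no total unimodularity here to appeal to, since $B$ is the incidence matrix of a non-bipartite graph). In the example above, the vector $y$ with $y_1=y_2=1$ and $y_3=\dots=y_6=-1$ gives $\delta\sum_v y_v=3(2-4)=-6$ while $\sum_{uv}\min(0,y_u+y_v)=-2-2=-4$ (only the edges $34$ and $56$ contribute), so $-6<-4$ and this $y$ is a genuine infeasibility certificate. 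Your claim that ``the condition reduces to'' the indicator case is where the argument breaks.

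\textbf{What is actually done.} Cuckler and Kahn do not obtain the entropy lower bound from a bounded-max-weight fractional perfect matching---as the counterexample shows, such a matching need not exist. Instead they prove the bipartite bound $h(H)\ge n\ln\delta(H)$ for bipartite graphs $H$ on $n+n$ vertices with $\delta(H)\ge n/2$ via an explicit linear-algebraic construction (quoted here as \cref{lem:non-negative-bipartite}), and transfer it to the non-bipartite setting. This is precisely what this paper does in \cref{section:lower-bound-for-upper range} for the hypergraph analogue: it builds an auxiliary bipartite graph, invokes \cref{lem:non-negative-bipartite}, and folds the resulting fractional perfect matching back, using concavity of $t\mapsto-t\ln t$ to control the entropy. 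If you wish to salvage your outline, replacing the false lemma by this bipartite entropy bound (applied to the bipartite double cover of $G$, then folded) gives a correct proof.
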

(Note that $\Phi(K_n) \cdot p^{n/2}$ is the expected number of perfect matchings in a random graph on $n$ vertices in which every edge is present with probability $p$. It is not hard to show that such random graphs certify tightness of the bound in \cref{thm:CK-2}).

\subsection{Extensions to hypergraphs}
A \emph{$k$-uniform hypergraph} (which we sometimes abbreviate as a \emph{$k$-graph}) is a generalisation of a graph where each edge is a set of exactly $k$ vertices (so, graphs are 2-uniform hypergraphs). A central program 
in combinatorics is to understand the extent to which classical theorems about graphs can be generalised to hypergraphs, and there has been particular interest in hypergraph generalisations of Dirac's theorem. However, despite intensive effort over the last 20 years, even the most basic questions largely remain unanswered (see the surveys \cite{RR-survey,Zhao-survey} for much more than we will be able to discuss here).

The notion of a perfect matching generalises in the obvious way to hypergraphs (there are various ways to generalise the notion of a Hamilton cycle, too, though we will not consider these in the present paper). The study of Dirac-type theorems for hypergraphs generally concerns the following parameters.
\begin{definition}[Hypergraph degrees]\label{def:min-degrees}
For nonnegative integers $d<k$, the \emph{minimum $d$-degree} $\delta_d(G)$ of an $n$-vertex $k$-uniform hypergraph $G$ is the minimum, over all sets of $d$ vertices $S$, of the degree of $S$ (i.e., the number of edges $e\supset S$).
\end{definition}

\begin{definition}[Hypergraph Dirac threshold]\label{def:dirac-threshold}
     For $n$ divisible
by $k$, let $m_{d}(k,n)$ be the minimum $d$-degree that ensures
the existence of a perfect matching in a $k$-uniform hypergraph on $n$ vertices. Noting that a set of $d$ vertices in an $n$-vertex $k$-uniform hypergraph can have degree at most $\binom{n-d}{k-d}$, let
\[\alpha_d(k) \coloneqq \lim_{n\rightarrow \infty} \frac{m_d(k,n)}{\binom{n-d}{k-d}}\]be the ``asymptotic Dirac threshold'' for $d$-degree in $k$-uniform hypergraphs (it was proved by Ferber and Kwan~\cite[Theorem~1.2]{FM-Dirac-threshold} that this limit exists).
\end{definition}
In general, the values of $\alpha_d(k)$ are unknown: determining them is one of the most important open problems in extremal hypergraph theory (with surprising connections to certain inequalities in probability theory~\cite{AFHRRS}). Apart from sporadic values of $(d,k)$ (discussed in the survey \cite{Zhao-survey}), the most general result is due to Frankl and Kupavskii~\cite{Erdos-matching-conj}, who showed that $\alpha_d(k) = 1/2$ for $d \geq 3k/8$.

Despite the fact that the values of $\alpha_d(k)$ are in general not known, recently Kang, Kelly, K\"uhn, Osthus and Pfenninger~\cite{spread-measure} and Pham, Sah, Sawhney and Simkin~\cite{PSS23} (see also \cite{kelly2023optimal})
were able to prove estimates on the number of perfect matchings above the Dirac threshold, as a consequence of their work on so-called \emph{spread measures}\footnote{Actually, similar estimates are also available using the so-called \emph{weak regularity lemma} for hypergraphs, and the so-called \emph{absorption method} (see \cite{FM-Dirac-threshold} for more on both), though the results in \cite{spread-measure,kelly2023optimal,PSS23} have much better quantitative aspects.}. Specifically, for an $n$-vertex $k$-graph $G$ (with $n$ divisible by $k$) which has minimum $d$-degree at least $(\alpha_d(k)+\gamma)\binom{n-d}{k-d}$ (for some positive $\gamma>0$), the number of perfect matchings in $G$ is
\[n^{(1-1/k)n}\exp(-O(n))\]
(where the implicit constant in the big-Oh notation is allowed to depend on $d$, $k$ and $\gamma$). This can be viewed as an analogue of the S\'arkozy--Selkow--Szemer\'edi bound in the graph case (though the fact that the implicit constant depends on $\gamma$ is not very desirable).

In the particular case where $d=k-1$ (where the Dirac threshold is known exactly, thanks to influential work of R\"odl, Ruci\'nski and Szemer\'edi~\cite{RRS09}), a hypergraph analogue of Cuckler and Kahn's \cref{thm:CK-2} is available. Indeed, it follows from results of Ferber, Hardiman and Mond~\cite{Ferber-counting} (improving earlier bounds of Ferber, Krivelevich and Sudakov~\cite{FKS-counting-hypergraph})
that if a $k$-uniform hypergraph $G$ satisfies $\delta_{k-1}(G)\ge (\alpha_{k-1}(k)+\gamma)(n-(k-1))=(1/2+\gamma)(n-(k-1))$ for any constant $\gamma>0$, then the number of perfect matchings in $G$ is at least
\begin{equation}\Phi(K_n^{(k)}) \cdot (p+o(1))^{n/k}\,,\label{eq:like-random}\end{equation}
where $p=\delta_{k-1}(G)/n$, and $\Phi(K_n^{(k)})=n!/((n/k)! (k!)^{n/k})$ is the number of perfect matchings in the complete $k$-uniform hypergraph on $n$ vertices.

Ferber, Hardiman and Mond also raised the problem of proving similar results for all $d<k$, but it was observed by Sauermann (see \cite[Section~5]{Ferber-counting}) that the particular bound in \cref{eq:like-random} does \emph{not} generalise to all $d$. Specifically, there is a 3-uniform hypergraph $G$ satisfying $\delta_{1}(G)\ge (\alpha_{1}(3)+0.01)(n-1)$ which has exponentially fewer than $\Phi(K_n^{(k)})(\alpha_{1}(3)+0.01)^{n/k}$ perfect matchings.

\subsection{Main results}
In this paper, we extend \cref{thm:CK-1} (i.e., the estimate in terms of graph entropy) to hypergraphs, for all $d<k$, and we deduce a hypergraph analogue of \cref{thm:CK-1} for $d\ge k/2$ (due to the above observation of Sauermann, some assumption on $d$ is necessary for this result).

Note that the definition of graph entropy (\cref{def:graph-entropy}) extends naturally to hypergraphs. First, our hypergraph analogue of \cref{thm:CK-1} is as follows.
\begin{theorem} \label{thm:num-of-PM}
    Fix constants $1\le d<k$ and $\gamma > 0$.
    Let $G$ be an $n$-vertex $k$-uniform hypergraph with minimum d-degree at least $(\alpha_d(k)+\gamma)\binom{n-d}{k-d}$ (recalling \cref{def:dirac-threshold}). Then the number of perfect matchings in $G$ is
    \[
        \mathrm{e}^{h(G)}(1/\mathrm{e}+o(1))^{(1-1/k)n}\,,
    \]
    where asymptotics are as $n\to \infty$ (along a sequence of integers divisible by $k$).
\end{theorem}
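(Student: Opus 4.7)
The plan is to prove the theorem by separately establishing matching upper and lower bounds on $\ln\Phi(G)$, each accurate up to an additive $o(n)$ error.

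\textbf{Upper bound.} I would show $\ln\Phi(G)\le h(G)-(1-1/k)n+o(n)$ by an entropy argument. Take $M$ uniform among perfect matchings of $G$, so $H(M)=\ln\Phi(G)$, and let $\xx[e]\coloneqq\Pr[e\in M]$, which is a fractional perfect matching satisfying $h(\xx)\le h(G)$. The goal then reduces to $H(M)\le h(\xx)-(1-1/k)n+o(n)$. I would use a Br\'egman--Stirling style chain-rule argument: fix a uniformly random ordering $\sigma$ of the vertices, reveal $M$ vertex-by-vertex along $\sigma$, and apply the entropy chain rule. By symmetry each edge of $M$ contributes exactly one ``reveal'' per ordering (at the step when its earliest vertex in $\sigma$ appears, which is a given endpoint with probability $1/k$), and the Dirac hypothesis ensures the residual hypergraph at each such step is dense enough that the conditional-entropy estimate via Stirling applied to an ``available candidate'' count is tight up to $o(1)$ per step. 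Summing, and relating the result to $h(\xx)$ by averaging over $\sigma$, yields the claimed upper bound.

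\textbf{Lower bound.} Here the plan is to exhibit a distribution $\mu$ on perfect matchings of $G$ with $H(\mu)\ge h(G)-(1-1/k)n-o(n)$; then $\ln\Phi(G)\ge H(\mu)$ finishes. Let $\xx^{*}$ be a fractional perfect matching attaining $h(\xx^{*})=h(G)$. The natural candidate for $\mu$ is a Gibbs-type measure $\mu(M)\propto\prod_{e\in M}w_{e}$ with positive edge weights $w_{e}$ chosen (by convex duality, feasibility supplied by the Dirac hypothesis) so that the edge-marginals of $\mu$ match $\xx^{*}$; one then has the identity $H(\mu)=\ln Z(w)-\sum_{e}\xx^{*}[e]\ln w_{e}$, where $Z(w)=\sum_{M}\prod_{e\in M}w_{e}$ is the matching partition function. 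The task reduces to lower-bounding $\ln Z(w)$, which I would do by adapting the spread-measure and absorption frameworks of Kang--Kelly--K\"uhn--Osthus--Pfenninger and Pham--Sah--Sawhney--Simkin to the \emph{weighted} setting: build random almost-perfect matchings via an $\xx^{*}$-biased random greedy process, using the Dirac condition to keep the process on track, and close off the remaining $o(n)$ vertices with the known absorbing-set technology. Spreadness of the resulting distribution then lower-bounds its support size (hence $Z(w)$) via a Park--Pham style volume count, and careful bookkeeping matches the constant $(1-1/k)n$ on the nose.

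\textbf{Main obstacle.} The principal technical difficulty is the lower bound. Existing spread-measure frameworks produce \emph{some} perfect matching with no marginal control, so the essentially new ingredient is to engineer them to produce a random perfect matching whose edge-marginals are tilted toward an arbitrary prescribed fractional target $\xx^{*}$, without destroying the $\Omega(1)$-spread property that is needed for the entropy count. A related subtlety is matching the sharp constant $(1-1/k)n$ on both sides with no $\gamma$-dependent slack, which rules out crude applications of the existing spread-measure bounds (whose errors depend on $\gamma$). The upper bound is comparatively routine, provided the hypergraph Br\'egman--Stirling setup is correctly localised so as to recover $h(\xx)$ rather than a looser degree-based surrogate.
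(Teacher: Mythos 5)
Your upper bound is in the same spirit as the paper's: the paper deduces it from a technical lemma of Kahn (Theorem~4.2 of~\cite{Kahn-Shamir}), which formalises exactly the kind of vertex-by-vertex chain-rule estimate you sketch. Your sketch elides the nontrivial correction term (Kahn's $\gamma_v(Y)$ quantity, bounded via~\cite[Lemma~4.3]{Kahn-Shamir}), but the plan is recognisably the right one.

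Your lower bound, however, has two genuine gaps and misses the paper's central difficulty. First, the Gibbs-measure detour introduces problems you would then have to solve: it is not clear that edge weights $w_e>0$ with marginals matching an arbitrary prescribed $\xx^\star$ exist for hypergraph perfect matchings (this is a nonlinear moment-matching problem with no obvious feasibility guarantee), and the step ``spreadness lower-bounds $Z(w)$ via a volume count'' does not follow---spreadness of a measure controls its marginals, not the \emph{weighted} sum $Z(w)=\sum_M\prod_{e\in M}w_e$, and the $w_e$ are exactly the quantities you have no uniform control over. Second, and more fundamentally, you propose guiding a random greedy process by $\xx^\star$, but the paper's key obstruction is that the entropy-maximising $\xx^\star$ need \emph{not} be well-distributed: it may have edges with weight far above $O(1/n^{k-1})$, and then the trajectory analysis of the greedy process breaks down. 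In the graph case Cuckler--Kahn rule this out by a $4$-cycle weight-shifting argument, but that argument does not generalise to hypergraphs. The paper's main new idea is an ``annealing'' step (Lemma~\ref{lem:exist-normal-entropy}): mix $\xx^\star$ with a well-distributed fractional perfect matching obtained from a spread measure, then run a shifting-structure argument (Lemmas~\ref{lem:weight-shift} and~\ref{lem:find-structure}) to produce a $D$-well-distributed fractional perfect matching whose entropy is within $\varepsilon n$ of $h(G)$. Only after that does the random greedy process (Theorem~\ref{lem:greedy-matching-analysis}, analysed via Freedman's inequality) produce a high-entropy distribution on perfect matchings, from which the count follows by \cref{cla:unif-maximizes}. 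Your proposal identifies a real subtlety---avoiding $\gamma$-dependent slack---but not this one, which is where the actual work lies.
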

We remark that Glock, Gould, Joos, K\"{u}hn and Osthus~\cite{Stefan-counting} explicitly asked if the entropy approach of Cuckler and Kahn can be generalized to hypergraphs; \cref{thm:num-of-PM} answers this question in the setting of perfect matchings.

For $d \geq k/2$, we are able to determine the minimum possible value of $h(G)$ over all $k$-graphs with a given minimum $d$-degree, and we are therefore able to deduce an analogue of \cref{thm:CK-2}, as follows.
\begin{theorem}\label{thm:lb-no-entropy} 
    Fix constants $1\le d<k$ satisfying $d\ge k/2$, and fix a constant $\gamma > 0$.
    Let $G$ be an $n$-vertex $k$-uniform hypergraph with minimum $d$-degree $\delta_d(G)\ge (\alpha_d(k)+\gamma)\binom{n-d}{k-d}$ (recalling \cref{def:dirac-threshold}), and let $p=\delta_d(G)/\binom{n-d}{k-d}$.    
    Then the number of perfect matchings in $G$ is at least 
    \[
        \Phi(K_n^{(k)})\cdot (p+o(1))^{n/k},
    \]
    where $\Phi(K_n^{(k)})=n!/((n/k)! (k!)^{n/k})$ is the number of perfect matchings in the complete $k$-uniform hypergraph on $n$ vertices, and asymptotics are as $n\to \infty$ (along a sequence of integers divisible by $k$).
\end{theorem}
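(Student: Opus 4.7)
The plan is to combine \cref{thm:num-of-PM} with a near-optimal lower bound on the hypergraph entropy $h(G)$. A quick Stirling computation gives
\[
\ln\Phi(K_n^{(k)}) \;=\; \frac{n}{k}\ln\!\binom{n-1}{k-1} \;-\; \left(1-\frac{1}{k}\right)n \;+\; O(\ln n),
\]
so after substituting the conclusion of \cref{thm:num-of-PM}, the theorem reduces to showing
\[
h(G) \;\geq\; \frac{n}{k}\ln\!\left(p\binom{n-1}{k-1}\right) - o(n).
\]
By iterated averaging over supersets, the hypothesis $\delta_d(G) \ge p\binom{n-d}{k-d}$ forces $\delta_{d'}(G) \ge p\binom{n-d'}{k-d'}$ for every $d' \le d$. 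In particular, writing $c \coloneqq p\binom{n-1}{k-1}$, we have $\delta_1(G) \ge c$; and when $k\ge 3$ (so that $d \ge k/2 \ge 2$) we also get the codegree bound $\delta_2(G) \ge p\binom{n-2}{k-2}$, and so on down to the full strength of the $d$-degree hypothesis.

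It would suffice to exhibit a fractional perfect matching $\xx$ of $G$ with $\xx[e] \le (1+o(1))/c$ for every edge $e$: summing the vertex constraint $\sum_{e\ni v}\xx[e]=1$ over $v$ gives $\sum_e \xx[e] = n/k$, so such an $\xx$ would automatically have
\[
h(\xx) \;=\; \sum_e \xx[e]\ln(1/\xx[e]) \;\geq\; \frac{n}{k}\ln c - o(n),
\]
matching the target. One natural first attempt is to analyse the max-entropy fractional PM directly: by Lagrange multipliers applied to \cref{def:graph-entropy}, it has the product form $\xx^*[e] = (1/\mathrm{e})\prod_{v\in e}c_v$ for some positive scalars $(c_v)_{v\in V}$, and one would aim to use the minimum-degree conditions to force the $c_v$'s to be essentially equal (so that $\xx^*$ is essentially uniform at value $1/c$). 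An alternative, closer to Cuckler--Kahn's original argument for graphs, is to first extract a nearly-$c$-regular spanning sub-hypergraph $G' \subseteq G$ (via LP duality, random sparsification, or a nibble-style argument) and then take $\xx$ proportional to $\mathbf{1}_{G'}$.

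The main obstacle --- and the reason the hypothesis $d\ge k/2$ is needed --- is that the desired bounded-weight fractional PM need \emph{not} exist under just a minimum $1$-degree assumption: Sauermann's $3$-uniform example (recalled in the introduction) has $\delta_1(G) \ge c$ yet exponentially suboptimally many perfect matchings, hence via \cref{thm:num-of-PM} entropy much smaller than $(n/k)\ln c$. The proof must therefore essentially exploit the higher-codegree consequences of the $d$-degree hypothesis when $d\ge k/2$, perhaps via a stability argument showing that any $G$ far from the extremal quasirandom configuration has strictly more perfect matchings, or by iterating the structural regularity of $G$ from $d'=d$ downwards. Making this quantitative --- whether at the level of controlling the max-entropy weights $c_v$ or at the level of nearly-regular sub-hypergraph extraction --- is what I expect to be the technical heart of the proof.
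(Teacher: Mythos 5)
Your reduction of \cref{thm:lb-no-entropy} to the entropy bound
\[
h(G) \;\geq\; \frac{n}{k}\ln\!\left(p\binom{n-1}{k-1}\right) - o(n)
\]
is correct (including the Stirling computation), and this is exactly the reduction the paper makes in the proof of \cref{thm:lb-no-entropy} via \cref{thm:lb-entropy} (note $\frac kn\cdot\frac{\binom nd}{\binom kd}\delta_d(G)=p\binom{n-1}{k-1}$). You also correctly observe that a fractional PM with uniformly bounded weights would suffice and that Sauermann's construction blocks such a bound for $d<k/2$, so the $d\ge k/2$ hypothesis must be used.

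However, the proposal stops short of actually proving the entropy bound: you sketch two candidate strategies (Lagrange-multiplier analysis of $\xx^\star$, or extraction of a nearly-regular spanning subhypergraph and taking $\xx$ proportional to its indicator) and explicitly defer the technical content. Neither of these routes is carried out, and neither is obviously salvageable. The paper itself remarks (in \cref{section:construct-PM}) that controlling the maximum-entropy fractional PM directly is precisely what \emph{fails} to generalise from the $k=2$ case; in fact the entire annealing machinery of \cref{lem:exist-normal-entropy} exists because one cannot rule out pathological high-weight edges in $\xx^\star$, so the Lagrange-multiplier approach (forcing the $c_v$'s to be essentially equal from a $d$-degree hypothesis alone) runs straight into the obstruction that motivated the paper's extra work. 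The ``extract a nearly-$c$-regular spanning subhypergraph'' idea is also much stronger than what the hypotheses deliver: a fractional PM proportional to an indicator $\mathbf 1_{G'}$ requires $G'$ to be literally $c'$-regular for some $c'$, which is a far stronger conclusion than the existence of a perfect matching, and no argument is given for why such $G'$ exists. Neither sketch engages with how the codegree information for $2\le d'\le d$ would actually be converted into a regularity statement.

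The paper's actual proof of the entropy bound takes a different and more concrete route, and you should be aware of it because it is where the $d\ge k/2$ hypothesis bites. One builds an auxiliary bipartite graph $\tilde G$ on parts $\tilde A$ (all $d$-sets of $V(G)$, each duplicated $\binom n{k-d}$ times) and $\tilde B$ (all $(k-d)$-sets, each duplicated $\binom nd$ times), joining $U\in\tilde A$ to $W\in\tilde B$ when $U\cup W\in E(G)$. The minimum degree of $\tilde G$ on the $\tilde A$-side is $\binom nd\delta_d(G)$ and on the $\tilde B$-side is $\binom n{k-d}\delta_{k-d}(G)$; the monotonicity \cref{cla:monotone} together with $k-d\le d$ shows both are at least $\tilde n/2$ (where $\tilde n=\binom nd\binom n{k-d}$). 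One then applies the Cuckler--Kahn bipartite entropy lower bound (\cref{lem:non-negative-bipartite}, proved by an explicit linear-algebraic construction) to get a high-entropy fractional PM $\tilde\xx$ of $\tilde G$, and pushes it forward to a fractional PM $\xx$ of $G$ by summing over all $\tilde e\sim e$ and normalising; Jensen then gives the desired entropy bound for $\xx$. This is qualitatively different from both of your sketched strategies: rather than controlling weights of a max-entropy object or finding a regular subhypergraph, it imports an already-known bipartite-graph result through a blow-up construction. To make your proposal into a proof you would need to either replicate this (or some other) construction or actually flesh out one of your sketches, which as noted above look problematic.

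One further small inaccuracy: you assert that iterated averaging forces $\delta_{d'}(G)\ge p\binom{n-d'}{k-d'}$ for all $d'\le d$, which is precisely \cref{cla:monotone}; this is true, but observe that the paper needs the inequality for $d'=k-d$, which is only $\le d$ when $d\ge k/2$. Making this explicit would make clearer where your (correct) instinct that $d\ge k/2$ is essential actually enters the argument.
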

As mentioned in the last subsection, the $d=k-1$ case of \cref{thm:lb-no-entropy} was already known, thanks to work of Ferber, Hardiman and Mond~\cite{Ferber-counting}.

Our proof approach follows the same approximate strategy as Cuckler and Kahn~\cite{CK-entropy,CK-counting}. Indeed, \cref{thm:num-of-PM} really consists of a lower bound and an upper bound, proved separately. The lower bound is proved via analysis of a so-called \emph{random greedy process} (as in \cite{CK-counting}), and the upper bound is proved using the so-called \emph{entropy method} (as in \cite{CK-entropy}). Actually, the desired upper bound can be quite easily deduced from a general estimate proved by Kahn~\cite{Kahn-Shamir} in his work on \emph{Shamir's conjecture}; 
the main content of this paper is the lower bound.

The most significant point of departure from \cite{CK-counting} is that in the hypergraph setting it seems to be harder to study properties of the maximum-entropy fractional perfect matching. To give some context: the proof of \cref{thm:num-of-PM} considers a random process guided by a fractional perfect matching $\xx$, and to maintain good control over the trajectory of this process it is important that $\xx$ is quite evenly distributed over the edges of $G$.

In \cite{CK-entropy}, a simple weight-shifting argument is used to show that the maximum-entropy fractional perfect matching is always quite evenly distributed. Indeed, if $\xx$ has ``unevenly distributed weights'', one can shift weights around a 4-cycle to ``even out the weights'' and increase the entropy. There does not seem to be a direct generalisation of this fact to hypergraphs: in the hypergraph setting we cannot rule out the possibility that the entropy-maximising fractional perfect matching $\xx^\star$ has a few very high-weight edges (these can be viewed as ``pockets of trapped energy''). So, we introduce an additional ``annealing'' technique: we slightly perturb $\xx^\star$ using a \emph{spread measure} on perfect matchings (obtained from \cite{spread-measure,kelly2023optimal}) to ``release the trapped energy'': namely, such a perturbation enables entropy-increasing weight-shifting operations. The resulting fractional perfect matching might not be entropy-maximising, but it is \emph{nearly} entropy-maximising, which is good enough.
Similar ideas also appeared in a recent work of Joos and Schrodt~\cite{Count-oriented-trees} on counting oriented trees in digraphs with large semidegree.

\subsection{Further directions}
Perhaps the most important problem left open by our work is to prove a result similar to \cref{thm:lb-no-entropy} for $d < k/2$. As mentioned in the introduction, the specific bound in \cref{thm:lb-no-entropy} cannot hold for all $d<k/2$, due to a construction of Sauermann (see \cite[Section~5]{Ferber-counting}) but we can still ask for the minimum possible number of perfect matchings given a $d$-degree condition. This seems to have been first explicitly asked by Ferber, Hardiman and Mond~\cite{Ferber-counting}. Given \cref{thm:num-of-PM}, this problem now comes down to an entropy-maximisation problem over fractional perfect matchings.

Second, there is the question of improving the quantitative aspects of \cref{thm:num-of-PM,thm:lb-no-entropy}. We have presented both theorems with a multiplicative error of the form $(1+o(1))^n$, but the proof really gives a multiplicative error of the form $(1+O(n^{-c_k}))^n$ for some $c_k\ge 0$. 
It is unclear what exactly is the sharpest bound one can hope for, without using more refined information about our hypergraph $G$ than its entropy. 

Third, the results in this paper only apply when the Dirac threshold is ``asymptotically exceeded'' (i.e., when the minimum $d$-degree is at least $(1+\gamma)m_d(k,n)$ for some constant $\gamma>0$). It would be interesting to prove similar results under only the assumption $d\ge m_d(k,n)$. In particular, this may be within reach in the case $d\ge k/2$, as in this case the exact value of $m_d(k,n)$ is known (thanks to Treglown and Zhao~\cite{TZ12,TZ13}).

Finally, we would like to ask if \cref{thm:num-of-PM,thm:lb-no-entropy} can be extended to other types of spanning substructures other than perfect matchings. In particular, there are several different notions of Hamilton cycles in hypergraphs (``tight cycles'', ``loose cycles'', ``Berge cycles''...), which would be a good starting point. There has actually already been quite some work in this direction, but only in the case $d=k-1$ (i.e., where we impose a condition on $\delta_{k-1}(G)$). In particular, the work of Ferber, Hardiman and Mond~\cite{Ferber-counting}, mentioned in the introduction, proves an analogue of the $d=k-1$ case of \cref{thm:lb-no-entropy} for certain types of hypergraph Hamilton cycles.

\subsection{Notation}
As convention, we abbreviate a $k$-uniform hypergraph as a $k$-graph. We also call a subset of vertices of size $d$ a $d$-set. 

Our graph-theoretic notation is for the most part standard. For a graph/hypergraph $G$, we write $V(G)$ and $E(G)$ for its sets of vertices and edges, respectively. 
For $S\subseteq V(G)$, we write $\deg_{G}(S)$ to denote its degree.
If $G$ is a graph, we write $\delta(G)$ to denote the minimum degree of $G$. If $G$ is a $k$-graph, for $0\leq d\leq k-1$, we write $\delta_d(G)$ to denote its minimum $d$-degree. 
We implicitly treat an edge as a set of vertices.
We say a bipartite graph on $2n$ vertices is \emph{balanced} if each part has size $n$.

Our use of asymptotic notation is standard as well. For functions $f=f(n)$ and $g=g(n)$, we write $f=O(g)$ to mean that there is a constant $C$ such that $|f(n)|\le C|g(n)|$ for all $n$, $f=\Omega(g)$ to mean that there is a constant $c>0$ such that $f(n)\ge c|g(n)|$ for sufficiently large $n$, and $f=o(g)$ or $g=\omega(f)$ to mean that $f/g\to0$ as $n\to\infty$.
Slightly less standardly, for $a, b\in \R$ we write $a = \pm b$ to mean $a \in [-b,b]$.

Finally, we also use standard probabilistic notation: $\Pr[\mathcal E]$ denotes the probability of an event $\mathcal E$ and $\E[X]$ denotes the expected value of a random variable $X$. We say a sequence of events $(\mathcal E_n)_{n\in \mathbb N}$ happens with high probability (abbreviated as whp) if $\lim_{n\rightarrow \infty}\Pr(\calE_n) = 1$.

\subsection{Structure of the paper}
In \cref{sec:prelim} we start with some general preliminaries, before moving on to the proofs of \cref{thm:num-of-PM,thm:lb-no-entropy}.

\cref{thm:num-of-PM} really consists of a lower bound and an upper bound on the number of perfect matchings, which are proved separately. Most of the paper is devoted to the lower bound. Indeed, in \Cref{section:construct-PM}, we will show that in the setting of \cref{thm:num-of-PM} we can always find a fractional perfect matching which simultaneously has almost maximum entropy, and which satisfies an ``approximate uniformity'' condition. 
In \Cref{section:random-greedy}, we will show how to use such a fractional perfect matching to prove the lower bound in \Cref{thm:num-of-PM}, via analysis of a random matching process.

Then, in \Cref{section:entropy-ub}, we deduce the upper bound in \Cref{thm:num-of-PM} from a result of Kahn~\cite{Kahn-Shamir} (proved using the so-called \emph{entropy method}).

Finally, in \Cref{section:lower-bound-for-upper range}, we deduce \Cref{thm:lb-no-entropy} by constructing an explicit fractional perfect matching and lower bounding its entropy. The primary ingredient here is an algebraic construction of fractional perfect matchings in bipartite graphs, due to Cuckler and Kahn~\cite{CK-counting}.

\subsection*{Acknowledgments}We would like to thank the referees for a number of helpful comments and suggestions, which have substantially improved the paper.

\section{Preliminaries}\label{sec:prelim}
First, it is convenient to introduce some notation that will be used throughout the paper.
\begin{definition}[Dirac hypergraphs]\label{def:d-Dirac-k-graph}
Fix $1\leq d \leq k-1$ and $\gamma > 0$, and let $G$ be a $k$-uniform hypergraph on $n$ vertices. We say $G$ is a \emph{$(d,\gamma)$-Dirac $k$-graph} if it satisfies $\delta_d(G) \geq (\alpha_d(k)+\gamma)\binom{n-d}{k-d}$ (recalling \cref{def:dirac-threshold}) and $k\mid n$.
\end{definition}
\begin{definition}
    For a hypergraph $G$, let $\Phi(G)$ be the number of perfect matchings in $G$.
\end{definition}
\begin{definition}[Well-distributed fractional perfect matchings] \label{def:well-distributed}
    For a $k$-graph $G$ an edge-weighting $\xx$ of $G$, and $D \geq 1$, we say that $\xx$ is \emph{$D$-well-distributed} if \[\frac1{D n^{k-1}}\le \xx[e] \le \frac D{n^{k-1}}\] for all edges $e$.
\end{definition}

\subsection{Inequalities on (fractional) perfect matchings}
We now collect some basic inequalities that will be used throughout the proof. First, the following inequality can be proved with an easy double-counting argument.
\begin{fact}[Monotonicity of minimum degrees]
\label{cla:monotone}
    Let $G$ be an arbitrary $k$-uniform hypergraph. Then,
    \[
        \frac{\delta_0(G)}{\binom{n}{k}} \geq \cdots \geq \frac{\delta_{k-1}(G)}{\binom{n-k+1}{1}}\,.
    \]
\end{fact}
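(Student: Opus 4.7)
The plan is a straightforward double-counting argument, showing that consecutive ratios $\delta_d(G)/\binom{n-d}{k-d}$ and $\delta_{d+1}(G)/\binom{n-d-1}{k-d-1}$ satisfy the desired monotonicity; iterating then gives the full chain of inequalities. So I will fix $0\le d\le k-2$ and aim to prove that
\[
    (k-d)\,\delta_d(G)\;\ge\;(n-d)\,\delta_{d+1}(G),
\]
which, using the elementary identity $\binom{n-d}{k-d}/\binom{n-d-1}{k-d-1}=(n-d)/(k-d)$, is equivalent to the claimed inequality between the $d$th and $(d{+}1)$th terms.

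To prove the displayed inequality, I would pick a $d$-set $S$ with $\deg_G(S)=\delta_d(G)$ and double count the pairs $(T,e)$ such that $S\subseteq T\subseteq e$, where $T$ is a $(d{+}1)$-set and $e$ is an edge of $G$. Counting by $e$ first: every edge $e\supseteq S$ contains exactly $k-d$ such $(d{+}1)$-sets $T$ (one for each vertex in $e\setminus S$), so the number of pairs is exactly $(k-d)\delta_d(G)$. Counting by $T$ first: there are $n-d$ choices for $T\supseteq S$ (pick any extra vertex in $V(G)\setminus S$), and each such $T$ is contained in at least $\delta_{d+1}(G)$ edges; note that any edge containing $T$ automatically contains $S$, so no constraint is lost. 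This yields at least $(n-d)\delta_{d+1}(G)$ pairs, giving the desired inequality.

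There is essentially no obstacle here; the only thing to be a little careful about is the direction of the bounds: when counting by $T$ I am \emph{lower-bounding} the contribution of each $T$ by $\delta_{d+1}(G)$ (so this side becomes a lower bound on the number of pairs), while when counting by $e$ I am using an \emph{exact} count based on the specific $S$ I chose (so this side is exactly $(k-d)\delta_d(G)$). Combining them in the correct direction gives the one-step inequality, and then I chain across $d=0,1,\dots,k-2$ to conclude.
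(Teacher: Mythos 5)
Your double-counting argument is correct, and it is essentially the same argument as in the paper: the paper compares $\delta_i$ to $\delta_j$ directly for any $i\le j$ by extending a $d$-set and dividing out the overcount, whereas you compare consecutive indices and chain, which amounts to the same thing. The one small point you handled correctly is that fixing a \emph{specific} minimizing $S$ gives an exact count on one side and a lower bound on the other, which combine in the right direction.
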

\begin{proof}
    Let $0\leq i\leq j\leq k-1$. Fix an arbitrary $S\subseteq V$ of size $i$. Then, $\deg(S) \geq \binom{n-i}{j-i}\delta_j(G)/\binom{k-i}{j-i}$ because we may first extend $S$ to an arbitrary set of size $j$, observe that this extended set is contained in at least $\delta_j(G)$ edges, and then divide by the times an edge is overcounted, which is at most $\binom{k-i}{j-i}$.
    Since $\binom{n-i}{j-i}/\binom{k-i}{j-i} = \binom{n-i}{k-i}/\binom{n-j}{k-j}$, this implies the desired conclusion.
\end{proof}
\cref{cla:monotone} implies that $\alpha_0(k)\geq \alpha_1(k)\geq \dots\geq \alpha_{k-1}(k)$. Since $\alpha_{k-1}(k)$ is known to be equal to $1/2$ (see~\cite{RRS09}), we have the following corollary.
\begin{fact}\label{fact:dirac-bound}
For any $d<k$ we have $\alpha_d(k)\ge 1/2$.
\end{fact}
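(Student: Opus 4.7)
The plan is to combine \cref{cla:monotone} with the Rödl--Ruciński--Szemerédi result $\alpha_{k-1}(k)=1/2$. The key observation is that an extremal example at the $(k-1)$-degree threshold automatically serves as an extremal example at every lower $d$-degree threshold as well, via the monotonicity of normalised degrees.

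Concretely, I would first invoke the equality $\alpha_{k-1}(k)=1/2$ to produce, for every $\epsilon>0$, arbitrarily large $k$-uniform hypergraphs $G$ on $n$ vertices (with $k\mid n$) satisfying $\delta_{k-1}(G)\geq (1/2-\epsilon)(n-k+1)$ and yet containing no perfect matching. Then I would apply \cref{cla:monotone} to this very graph $G$ at the ratio endpoints $d$ and $k-1$, which immediately gives
\[
\frac{\delta_d(G)}{\binom{n-d}{k-d}} \;\geq\; \frac{\delta_{k-1}(G)}{n-k+1} \;\geq\; \frac{1}{2}-\epsilon.
\]
Since $G$ has no perfect matching, this shows $m_d(k,n) > (1/2-\epsilon)\binom{n-d}{k-d}$, whence $\alpha_d(k)\geq 1/2-\epsilon$. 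Letting $\epsilon\to 0$ yields the desired bound $\alpha_d(k)\geq 1/2$.

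There is no substantive obstacle: the statement follows directly from the definitions of $m_d(k,n)$ and $\alpha_d(k)$ together with the two ingredients already in place, namely $\alpha_{k-1}(k)=1/2$ and the monotonicity of \cref{cla:monotone}. The only conceptual point worth emphasising in the write-up is that the transfer goes in the direction ``high-$d$ extremal example $\Rightarrow$ low-$d$ extremal example'', which is exactly what the decreasing chain of normalised degrees in \cref{cla:monotone} provides.
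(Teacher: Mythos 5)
Your proof is correct and takes the same route as the paper: combine \cref{cla:monotone} with $\alpha_{k-1}(k)=1/2$ from \cite{RRS09}, transferring the extremal example at the codegree threshold down to lower $d$-degrees. One small point worth flagging: the paper's own prose states the resulting chain of inequalities as $\alpha_0(k)\le\cdots\le\alpha_{k-1}(k)$, which appears to be a typo (it should be $\ge$, as your argument --- and indeed the known value $\alpha_1(3)=5/9>1/2=\alpha_2(3)$ --- confirm); your careful spelling-out of the transfer direction is the correct version of the argument.
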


Recall Jensen's inequality:
\begin{fact}[Jensen's inequality]\label{fact:jensen-inequality}
    Let $\alpha_1,\dots, \alpha_n\in [0,1]$ with $\sum_{i=1}^n \alpha_i = 1$. Let $f: \mathbb{R}\rightarrow \mathbb{R}$ be a convex function. Then, for any $x_1,\dots, x_n\in \mathbb R$, we have $f(\sum_{i=1}^n \alpha_i x_i)\leq \sum_{i=1}^n \alpha_i f(x_i)$. If $f$ is a concave function, then the reverse inequality holds.
\end{fact}

Next, the following inequalities are easy corollaries of Jensen's inequality. 
\begin{lemma}\label{cla:jensen-entropy}
    Let $G$ be a $k$-graph and let $\xx$ be a fractional perfect matching in $G$. Then,
    \[h(\xx) \leq (1-1/k)n\ln n\,.\]
    In addition, if $\xx[e]\leq L$ for every edge $e$, then
    \[
        h(\xx)\geq \frac{n}{k} \ln\biggl(\frac{n}{L^2 k |E(G)|}\biggr)\,.
    \]    
\end{lemma}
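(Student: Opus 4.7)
My plan is to prove the two inequalities separately via elementary applications of the identity $\sum_{e \in E(G)} \xx[e] = n/k$, which follows by double-counting: summing the vertex constraint $\sum_{e \ni v}\xx[e] = 1$ over all $v \in V(G)$ gives $k\sum_{e} \xx[e] = n$.

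For the upper bound, I would fix a vertex $v$ and observe that $\{\xx[e]\}_{e\ni v}$ is a probability distribution supported on the $\deg_G(v) \le \binom{n-1}{k-1}$ edges through $v$. Applying the standard Shannon/Jensen bound (entropy is at most log of support size) gives $-\sum_{e\ni v}\xx[e]\ln\xx[e] \le \ln\binom{n-1}{k-1} \le (k-1)\ln n$. Summing over all $v\in V(G)$ and swapping the order of summation, each edge $e$ is counted exactly $k$ times (once for each of its $k$ endpoints), so the left-hand side becomes $k\,h(\xx)$. Dividing by $k$ yields $h(\xx) \le (1-1/k)n\ln n$.

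For the lower bound, I would use the hypothesis $\xx[e] \le L$ pointwise: since $\ln$ is monotone, $\ln(1/\xx[e]) \ge \ln(1/L)$, so multiplying by $\xx[e] \ge 0$ and summing yields $h(\xx) \ge (n/k)\ln(1/L)$. To bring the statement into the form involving $|E(G)|$, I would then invoke the feasibility inequality $n/k = \sum_e \xx[e] \le L|E(G)|$ (each of the at most $|E(G)|$ summands is at most $L$), which rearranges to $1/L \ge n/(L^2 k |E(G)|)$, and hence $\ln(1/L) \ge \ln\bigl(n/(L^2 k |E(G)|)\bigr)$. Chaining these gives the stated bound.

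There is no real obstacle here: both parts are routine applications of standard information-theoretic inequalities, and the lemma is just a preliminary used later in \cref{section:construct-PM,section:random-greedy}. The stated lower bound is in fact slightly weaker than the intermediate $(n/k)\ln(1/L)$ that drops out immediately, but is evidently phrased in this symmetric form so that it can be invoked directly in contexts where $|E(G)|$ and $L$ are the natural quantities to track.
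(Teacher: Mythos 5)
Your proof is correct. The upper bound argument is essentially the paper's: both decompose $h(\xx)$ as $\frac1k\sum_v\sum_{e\ni v}\xx[e]\ln(1/\xx[e])$, note that $(\xx[e])_{e\ni v}$ is a probability distribution on at most $\binom{n-1}{k-1}$ outcomes, and apply the entropy-versus-support-size bound.

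For the lower bound, however, you take a genuinely different and in fact more elementary route. The paper applies Jensen's inequality with the convex function $t\mapsto -\ln t$ to the probability distribution $\bigl(\tfrac{k}{n}\xx[e]\bigr)_e$, obtaining $h(\xx)\ge -\tfrac{n}{k}\ln\bigl(\tfrac{k}{n}\sum_e \xx[e]^2\bigr)$, and then bounds $\xx[e]^2\le L^2$ termwise to get $\sum_e\xx[e]^2\le L^2|E(G)|$. Your argument bypasses Jensen entirely: you use only the monotonicity of $\ln$ ($\ln(1/\xx[e])\ge\ln(1/L)$ pointwise) together with $\sum_e\xx[e]=n/k$, obtaining $h(\xx)\ge\tfrac{n}{k}\ln(1/L)$ directly, and then observe that the feasibility constraint $n/k\le L|E(G)|$ lets you weaken this to the stated form. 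Your observation that $\tfrac{n}{k}\ln(1/L)$ is the sharper intermediate bound is correct; notice that the paper's Jensen route would also deliver it if one used $\xx[e]^2\le L\xx[e]$ rather than $\xx[e]^2\le L^2$, so the two arguments land at the same place with essentially the same precision. The paper's phrasing in terms of $|E(G)|$ is simply the form convenient for the later invocation in \cref{lem:normal-entropy}, where $|E(G)|\le\binom nk$ is plugged in; either proof supports that application. Your version is arguably cleaner since it makes the dependence on the hypothesis $\xx[e]\le L$ completely transparent and avoids a second appeal to convexity.
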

\begin{proof}
    Because $\xx$ is a fractional perfect matching, we know that $\sum_{e\ni v}\xx[e] = 1$.
    Applying \Cref{fact:jensen-inequality} to each $v\in V$ with the concave function $t\mapsto \ln t$, $n = |\{e:e\ni v\}|$, $\alpha_i = \xx[e]$ and $x_i = 1/\xx[e]$, we conclude
    \begin{align*}
        h(\xx) =\frac{1}{k} \sum_{v\in V}\sum_{e\ni v} \xx[e] \ln(1/\xx[e]) &\leq \frac{1}{k}\sum_{v\in V}\ln\bigl(|\{e\in E(G): e\ni v\}|\bigr)\\
        &\leq \frac{n}{k} \ln\biggl(\binom{n-1}{k-1}\biggr)\le \frac{n}{k}\ln (n^{k-1})=
        (1-1/k)n\ln n\,.
    \end{align*}

    For the lower bound, applying~\Cref{fact:jensen-inequality} with the convex function $t\mapsto -\ln(t)$,$n = |E(G)|$, $\alpha_i = k\xx[e]/n$ and $x_i = \xx[e]$, we get 
    \[
        h(\xx) = -\sum_{e\in E(G)} \xx[e] \ln(\xx[e]) = -\frac{n}{k} \sum_{e\in E(G)} \frac{k\xx[e]}{n} \ln(\xx[e]) \geq -\frac{n}{k}\ln\biggl(\frac{k}{n}\sum_{e\in E(G)}\xx[e]^2\biggr)\,.
    \]
    Using the assumption that each $\xx[e] \leq L$, and rearranging finishes the proof. 
\end{proof}

\subsection{Entropy}
We will need the notion of entropy (of a random variable), and some of its properties. These can be found in any source on information theory, e.g.\ \cite{CT-entropy}. 
Recall that $\ln$ is the natural logarithm (base-$e$), and write $\operatorname{supp}(X)$ for the support of a random variable $X$
\begin{definition}\label{def:binary-entropy}
    Let $X$ be a random variable with finite support. Then the \emph{entropy} of $X$ is defined as:
    \[
        H(X) \coloneqq \sum_{x\in \operatorname{supp}(X)} \Pr[X= x]\cdot \ln(1/\Pr[X=x])\,.
    \]
    For an event $\mathcal E$ on the same probability space as $X$, we write $H(X\,|\,\mathcal E)$ for the entropy of $X$ in the conditional probability space given $\mathcal E$. Also, for a second random variable $Y$ with finite support, also on the same probability space, the \emph{conditional entropy of $X$ given $Y$} is defined as
    \[
        H(X\mid Y) \coloneqq \sum_{y\in \operatorname{supp}(Y)} \Pr[Y=y] H(X\,|\,\{Y=y\})\,.
    \]
    (Here, for an \emph{event} $\mathcal E$, we write $H(X\,|\,\mathcal E)$ for the entropy of $X$ in the conditional probability space given $\mathcal E$).
\end{definition}
Dropping conditioning cannot decrease the entropy:
\begin{fact}\label{cla:drop-conditioning}
    For any random variables $X,Y$ with finite support, we have $H(X\mid Y)\leq H(X)$.
\end{fact}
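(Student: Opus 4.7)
The plan is to rewrite the difference $H(X) - H(X \mid Y)$ as a nonnegative quantity (the mutual information between $X$ and $Y$) and verify nonnegativity via Jensen's inequality. First, I would unpack \cref{def:binary-entropy}, writing joint probabilities $p(x,y) = \Pr[X=x, Y=y]$ and marginals $p(x), p(y)$, and expressing the conditional probabilities $\Pr[X=x \mid Y=y]$ as $p(x,y)/p(y)$ (for those $(x,y)$ with $p(y) > 0$). A short algebraic rearrangement, using $\sum_y p(x,y) = p(x)$, should yield
\[
H(X) - H(X \mid Y) \;=\; \sum_{(x,y)} p(x,y)\, \ln\!\left(\frac{p(x,y)}{p(x)\,p(y)}\right),
\]
where the sum ranges over pairs $(x,y) \in \operatorname{supp}(X,Y)$ with $p(x,y) > 0$. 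The convention $0 \ln 0 = 0$ handles terms where marginals of the joint support differ from the individual supports.

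Next, I would apply Jensen's inequality to the concave function $t\mapsto \ln t$, viewing the left-hand side below as an expectation under the joint distribution of $(X,Y)$:
\[
-\bigl(H(X) - H(X\mid Y)\bigr) \;=\; \sum_{(x,y)} p(x,y)\, \ln\!\left(\frac{p(x)\,p(y)}{p(x,y)}\right) \;\leq\; \ln\!\left(\sum_{(x,y)} p(x)\,p(y)\right) \;\leq\; \ln 1 \;=\; 0,
\]
which, upon negating, gives the desired inequality $H(X \mid Y) \leq H(X)$. The second inequality above uses that $\sum_{(x,y) \in \operatorname{supp}(X,Y)} p(x)p(y) \leq \sum_{x,y} p(x)p(y) = 1$.

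I do not anticipate any real obstacle: this is textbook information theory, and the argument is a direct application of Jensen (equivalently, Gibbs' inequality) and can be quoted from any standard reference such as \cite{CT-entropy}. The only minor care is in bookkeeping the support: we sum only over pairs with $p(x,y) > 0$, which is legitimate because terms with $p(x,y) = 0$ contribute $0$ to $H(X\mid Y)$ under the stated convention.
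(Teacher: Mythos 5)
Your proof is correct. The paper does not actually prove this fact---it records it as a standard result from information theory and refers to \cite{CT-entropy}---and your argument (rewriting $H(X)-H(X\mid Y)$ as the mutual information $\sum_{x,y} p(x,y)\ln\bigl(p(x,y)/(p(x)p(y))\bigr)$ and establishing nonnegativity via Jensen applied to $\ln$) is precisely the standard textbook derivation that the citation points to. Your handling of the support bookkeeping (summing only over pairs with $p(x,y)>0$ and noting $\sum_{(x,y):p(x,y)>0} p(x)p(y)\le 1$, which is where strictness can be lost) is accurate and complete.
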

The entropy is maximised by a uniform distribution on its support:
\begin{fact}\label{cla:unif-maximizes}
    Let $X$ be a random variable with finite support. Then $H(X)\le \ln(|\operatorname{supp}(X)|)$, with equality when $X$ is a uniform distribution.
\end{fact}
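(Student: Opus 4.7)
The plan is to apply Jensen's inequality to the concave function $\ln$, exactly as was done in the lower-bound part of \cref{cla:jensen-entropy}. Writing $p(x) \coloneqq \Pr[X = x]$ for $x\in \operatorname{supp}(X)$, and viewing the entropy as the expectation
\[
H(X) = \sum_{x\in \operatorname{supp}(X)} p(x) \ln\bigl(1/p(x)\bigr) = \E\bigl[\ln(1/p(X))\bigr],
\]
Jensen's inequality (with the concave function $t\mapsto \ln t$) gives
\[
H(X) \le \ln\E\bigl[1/p(X)\bigr] = \ln\biggl(\sum_{x\in \operatorname{supp}(X)} p(x)\cdot \frac{1}{p(x)}\biggr) = \ln\bigl(|\operatorname{supp}(X)|\bigr).
\]

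For the equality claim, the plan is a direct calculation: if $X$ is uniform on a set $S$ of size $s$, then $p(x) = 1/s$ for every $x\in S$, and so
\[
H(X) = \sum_{x\in S} \frac{1}{s}\ln s = \ln s = \ln\bigl(|\operatorname{supp}(X)|\bigr).
\]

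There is no real obstacle here, since this is a textbook fact from information theory (and indeed the lower bound in \cref{cla:jensen-entropy} is proved by essentially the same manipulation). One small point worth being careful about is the convention $0\ln 0 = 0$, which is what allows the sum defining $H(X)$ to be restricted to $\operatorname{supp}(X)$ without ambiguity; this is already built into the definition at the start of the subsection, so no further discussion is needed.
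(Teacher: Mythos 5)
Your proof is correct and is the standard Jensen's-inequality argument for this textbook fact. The paper itself does not supply a proof (it states this as a \emph{Fact} and refers the reader to standard sources on information theory such as \cite{CT-entropy}), so there is nothing in the paper to compare against; your argument is exactly the canonical one, and the observation that the convention $0\ln 0 = 0$ lets one restrict the sum to $\operatorname{supp}(X)$ is a sensible point of care.
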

We will also need the chain rule for entropy, as follows.
\begin{fact}\label{cla:chain-rule}
    Let $X_1,\dots, X_n$ be random variables on the same probability space. Then
    \[
        H(X_1,\dots, X_n) = \sum_{i=1}^n H(X_i\mid X_1,\dots, X_{i-1})\,.
    \]
\end{fact}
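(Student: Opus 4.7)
The plan is to prove this by induction on $n$, with the substantive content being the two-variable identity $H(X,Y) = H(X) + H(Y \mid X)$.

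For the two-variable case, I would expand the left side directly from \cref{def:binary-entropy}:
\[
H(X,Y) = \sum_{(x,y)\in \operatorname{supp}(X,Y)} \Pr[X=x,\,Y=y]\,\ln\!\frac{1}{\Pr[X=x,\,Y=y]}.
\]
Factoring $\Pr[X=x,Y=y] = \Pr[X=x]\cdot \Pr[Y=y\mid X=x]$ (which is valid for every $(x,y)\in \operatorname{supp}(X,Y)$, since such pairs automatically have $\Pr[X=x]>0$), and using $\ln(1/(ab)) = \ln(1/a)+\ln(1/b)$, the sum splits into two pieces. In the first piece, I would sum out $y$ to obtain $\sum_x \Pr[X=x]\ln(1/\Pr[X=x]) = H(X)$. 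In the second piece, I would factor out $\Pr[X=x]$ from the inner sum and recognize the resulting expression $\sum_x \Pr[X=x]\sum_y \Pr[Y=y\mid X=x]\ln(1/\Pr[Y=y\mid X=x])$ as exactly $\sum_x \Pr[X=x]\,H(Y\,|\,\{X=x\}) = H(Y \mid X)$ by \cref{def:binary-entropy}.

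For the inductive step, given the identity for $n-1$ tuples, I would apply the two-variable case to the pair consisting of the tuple $(X_1,\dots,X_{n-1})$, viewed as a single random variable on the product space, and $X_n$. This yields
\[
H(X_1,\dots,X_n) = H(X_1,\dots,X_{n-1}) + H(X_n \mid X_1,\dots,X_{n-1}),
\]
and then expanding the first term via the inductive hypothesis gives the claim.

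There is no real obstacle; this is bookkeeping once one is careful about the convention $0\ln 0 = 0$ and restricts summations to the supports, as in \cref{def:binary-entropy}. The only mild subtlety is noticing that the factorization $\Pr[X=x,Y=y] = \Pr[X=x]\Pr[Y=y\mid X=x]$ is only meaningful on $\operatorname{supp}(X,Y)$, but since terms outside this support contribute $0$ on both sides of the identity, this causes no trouble.
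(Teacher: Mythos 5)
Your proof is correct. The paper does not actually prove this fact---it is stated as a standard result from information theory, with a pointer to \cite{CT-entropy}---so there is no paper proof to compare against. Your argument is the standard textbook derivation: expand $H(X,Y)$ directly from \cref{def:binary-entropy}, factor the joint probability as $\Pr[X=x]\Pr[Y=y\mid X=x]$ on $\operatorname{supp}(X,Y)$, split the logarithm to get $H(X)+H(Y\mid X)$, and then induct by treating $(X_1,\dots,X_{n-1})$ as a single random variable. Your remarks about the $0\ln 0$ convention and restricting to the support are exactly the right hygiene; there is nothing missing.
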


\section{\texorpdfstring{$D$}{D}-well-distributed fractional perfect matchings with near-maximal entropy}
\label{section:construct-PM}
In this section, for any $(d,\gamma)$-Dirac $k$-graph $G$ we show how to construct an well-distributed fractional perfect matching with almost maximum entropy. Specifically, if $D$ is sufficiently large in terms of $\varepsilon>0$ (and $k$, and $\gamma$), we show that there is a $D$-well-distributed fractional perfect matching $\xx$ satisfying $h(G) - h(\xx) = \varepsilon n$. (Recall the definition of a Dirac hypergraph from \cref{def:d-Dirac-k-graph}, and the definition of well-distributedness from \cref{def:well-distributed}).
\begin{lemma} \label{lem:exist-normal-entropy}
Fix constants $1\le d\le k-1$ and $\gamma>0$, and consider a $(d,\gamma)$-Dirac $k$-graph $G$. For any $\varepsilon>0$ which is sufficiently small in terms of $k,\gamma$, there is a $O(\varepsilon^{-3k})$-well-distributed fractional perfect matching $\xx$ satisfying $h(G) - h(\xx) \le \varepsilon n$.
\end{lemma}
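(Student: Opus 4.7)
The approach follows the ``annealing'' strategy outlined in the introduction. Let $\xx^\star$ be a fractional perfect matching with $h(\xx^\star)=h(G)$. From the spread-measure results of~\cite{spread-measure,kelly2023optimal}, obtain a $q$-spread probability measure $\mu$ on perfect matchings of $G$ with $q=O(n^{-(k-1)})$, and set $\zz[e]:=\mu(\{M:e\in M\})$; then $\zz$ is a fractional perfect matching satisfying $\zz[e]\le q$. With a symmetrisation argument (using the Dirac condition to redistribute mass, e.g.\ by averaging $\mu$ against a uniformly random relabelling of the vertex set, or by combining $\mu$ with the marginals of a uniform random perfect matching of an auxiliary quasi-regular subgraph), upgrade this to a two-sided bound $\zz[e]=\Theta(n^{-(k-1)})$ uniformly over $e\in E(G)$, so that $\zz$ is itself $O(1)$-well-distributed.

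For a small mixing parameter $\lambda=\Theta(\varepsilon^{3k})$, define
\[\xx_\lambda := (1-\lambda)\xx^\star + \lambda\zz\,.\]
By convexity $\xx_\lambda$ is a fractional perfect matching; by concavity of entropy combined with the bounds $h(\xx^\star)\le(1-1/k)n\ln n$ and $h(\zz)\ge(1-1/k)n\ln n-O(n)$ from~\cref{cla:jensen-entropy}, we have $h(\xx_\lambda)\ge h(\xx^\star)-O(\lambda n)$. Moreover $\xx_\lambda[e]\ge\lambda\zz[e]=\Omega(\varepsilon^{3k}n^{-(k-1)})$, so $\xx_\lambda$ already satisfies the lower-bound half of $O(\varepsilon^{-3k})$-well-distributedness, at an entropy cost of at most $O(\varepsilon^{3k}n)\ll\varepsilon n$.

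To enforce the upper bound, I would run a weight-shifting procedure on $\xx_\lambda$: while some ``heavy'' edge $e$ satisfies $\xx[e]>Dn^{-(k-1)}$ (with $D=\Theta(\varepsilon^{-3k})$), locate a disjoint edge $e'\in E(G)$ together with a second partition $\{f_1,f_2\}$ of the $2k$-set $e\cup e'$ into two edges $f_1,f_2\in E(G)$, chosen so that $\xx[e'],\xx[f_1],\xx[f_2]=O(n^{-(k-1)})$, and transfer a small amount $\delta$ of weight from $\{e,e'\}$ to $\{f_1,f_2\}$. This preserves the fractional-matching property and, to first order in $\delta$, changes the entropy by $\delta\ln(\xx[e]\xx[e']/(\xx[f_1]\xx[f_2]))=\Omega(\delta\ln D)>0$. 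Since entropy is bounded above by $h(G)$ and each shift is strictly entropy-increasing, the procedure terminates at a matching $\xx$ which is $O(\varepsilon^{-3k})$-well-distributed (the lower bound is preserved throughout provided $\delta$ is chosen small enough at each step) and satisfies $h(\xx)\ge h(\xx_\lambda)\ge h(G)-\varepsilon n$.

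The main obstacle I foresee is the combinatorial existence claim underpinning the shifts. For a fixed heavy $e$ one needs $\Theta(n^k)$ triples $(e',f_1,f_2)$ with $e'\in E(G)$ disjoint from $e$ and $\{f_1,f_2\}$ a partition of $e\cup e'$ into two edges of $G$; these should follow from the Dirac minimum $d$-degree condition (combined with~\cref{cla:monotone} to pass to the $(k-1)$-codegree bounds controlling the partition structure of $e\cup e'$), but the details require care, especially in the regime $1\le d<k-1$ where the hypothesis controls only $d$-sets. Given this, the global count $\sum_e\xx[e]=n/k$ forces at most $n/D$ heavy edges, so only an $O(1/D)$-fraction of candidate triples are blocked by a heavy $e',f_1$ or $f_2$, yielding a valid shift for $D$ large enough in terms of $\varepsilon,k,\gamma$. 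A secondary subtlety is engineering the uniform lower bound $\zz[e]=\Omega(n^{-(k-1)})$ on the spread measure, since spreadness itself only supplies upper bounds on edge marginals and may need to be supplemented by an auxiliary uniform distribution at negligible entropy cost.
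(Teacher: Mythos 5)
Your high-level strategy matches the paper's: start with an entropy-maximiser $\xx^\star$, anneal with a spread-measure-derived fractional perfect matching to obtain a pointwise lower bound on all edge weights, then run an entropy-increasing weight-shifting loop to kill the heavy edges. However, there are two genuine gaps in the details, and one parameter issue worth flagging.

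\emph{The shifting structure.} You propose shifting weight around a ``bipartition'' configuration: two disjoint edges $e,e'$ and a repartition of $e\cup e'$ into two further edges $f_1,f_2$. The paper instead uses the shifting structure of \cref{def:shifting-structure}: two edges $e,f$ sharing exactly one vertex, plus $k-1$ pairwise-disjoint auxiliary $(k-1)$-sets $U_2,\dots,U_k$ making $2k$ edges in total. The distinction matters. Your configuration requires, for at least one of the $\binom{2k}{k}/2 - 1$ nontrivial bipartitions of the $2k$-set, that \emph{both} halves be edges. Your suggested justification --- using \cref{cla:monotone} to ``pass to the $(k-1)$-codegree bounds'' --- is wrong, because \cref{cla:monotone} gives lower bounds on $\delta_i$ only for $i\le d$, not for $i>d$. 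When $d<k-1$, specific $(k-1)$-sets (indeed, specific $i$-sets for any $d<i<k$) may have codegree zero, so there is no control on the subsets of $e$ that a partition class would have to extend. Worse, for $d<k/2$ there is no bipartition $\{f_1,f_2\}$ for which $|f_1\cap e|\le d$ \emph{and} $|f_2\cap e|\le d$ simultaneously, so the $d$-degree condition cannot be applied symmetrically to any candidate. The paper's shifting structure avoids this entirely: \cref{lem:find-structure} builds $U_2,\dots,U_k$ greedily, needing only the \emph{vertex}-degree bound $\delta_1(G)\ge(1/2+\gamma)\binom{n-1}{k-1}$, which does follow from the $d$-degree hypothesis via \cref{cla:monotone,fact:dirac-bound}.

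\emph{The two-sided bound on $\zz$.} Spread measures give only the upper bound $\Pr[e\in M]=O(n^{1-k})$, as you note, and you need a matching lower bound. Your proposed ``symmetrisation'' remedies (averaging over random relabellings, or mixing with an auxiliary quasi-regular subgraph) do not obviously work: relabelling vertices produces a random hypergraph isomorphic to $G$ rather than a distribution on matchings of $G$, and the second alternative is not specified. The paper's \cref{lem:exist-normal} instead prepends $\Theta(n)$ rounds of a random greedy matching process before invoking the spread measure on the residual Dirac hypergraph; the greedy rounds directly furnish the lower bound $\Pr[e\in M]=\Omega(n^{1-k})$. You should adopt that device or something equally concrete.

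\emph{Parameters.} You set the mixing coefficient $\lambda=\Theta(\varepsilon^{3k})$. The paper instead mixes at scale $\varepsilon/C$ for $C=O(1)$, which still costs only $\varepsilon n$ in entropy because (crucially) $h(\xx^\star)-h(\hat\xx)\le Cn$ by \cref{lem:normal-entropy}, not merely $O(n\ln n)$; the exponent $\varepsilon^{-3k}$ appears only as the final well-distributedness constant $D$, not as the annealing parameter. Your smaller $\lambda$ forces a much smaller shift amount $\Delta$, blowing up the number of shifts needed and breaking the termination/bad-edge count that the argument relies on (the paper needs only $O(n^k/\ln(1/\varepsilon))$ shifts, which keeps the set of edges that have fallen below weight $2\Delta$ small). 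Relatedly, your argument does not address how the pointwise lower bound $\xx[e]\ge\Omega(n^{1-k})$ survives the shifting loop for the edges whose weights are being \emph{decreased}; the paper tracks the auxiliary hypergraph $G_{<2\Delta}$ precisely for this purpose.
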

We emphasise that in \cref{lem:exist-normal-entropy} (and in fact throughout the paper), $d,k,\gamma$ are treated as constants (that is to say, the implicit constant in big-Oh notation is allowed to depend on $d,k,\gamma$).

The construction to prove \cref{lem:exist-normal-entropy} consists of two steps. In the first step, we construct a fractional perfect matching $\hat{\xx}$ that is $O(1)$-well-distributed (but whose entropy may be quite far from the maximum).
In the second step, we use $\hat{\xx}$ to ``anneal'' the entropy-maximising perfect matching $\xx^\star$, which enables the use of a weight-shifting argument to obtain the desired fractional perfect matching.

\subsection{\texorpdfstring{$D$}{D}-well-distributed fractional perfect matchings}\label{subsec:normal}
As stated above, the first step is to find a $C$-well-distributed fractional perfect matching $\hat{\xx}$, as follows.
\begin{lemma} \label{lem:exist-normal}
    Fix constants $1\leq d\leq k-1$ and $\gamma > 0$, let $k\mid n$, and let $G$ be an $n$-vertex $(d,\gamma)$-Dirac $k$-graph.
    Then, $G$ has a $O(1)$-well-distributed fractional perfect matching $\hat{\xx}$.
\end{lemma}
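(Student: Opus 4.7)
The plan is to construct $\hat{\xx}$ as a convex combination of two fractional perfect matchings of $G$, one providing an upper bound on edge weights and the other providing a lower bound. For the upper bound I will invoke a spread measure from the literature, and for the lower bound I will argue via a weight-shifting procedure that exploits the near-regularity of Dirac hypergraphs.

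For the upper-bound fractional perfect matching $\yy_1$, I invoke the spread measure results of \cite{spread-measure, kelly2023optimal, PSS23}: for a $(d,\gamma)$-Dirac $k$-graph $G$, there is a probability measure $\mu$ on perfect matchings of $G$ satisfying the spread bound $\Pr_{M\sim\mu}[M'\subseteq M]\le (C_1/n^{k-1})^{|M'|}$ for every partial matching $M'\subseteq E(G)$, with $C_1=C_1(k,\gamma)$. Setting $\yy_1[e]:=\Pr_{M\sim\mu}[e\in M]$ automatically gives a fractional perfect matching (since each $v$ lies in exactly one edge of $M$), and the singleton spread bound immediately yields $\yy_1[e]\le C_1/n^{k-1}$.

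For the lower-bound fractional perfect matching $\yy_2$, I propose constructing $\yy_2$ iteratively, starting from $\yy_1$: while there is an edge $e_0$ with $\yy_2[e_0]$ below the target threshold $c_2/n^{k-1}$, I locate an ``alternating structure'' -- a small collection of edges together with signs $\sigma_i\in\{\pm 1\}$ whose signed incidence sums to zero at every vertex -- and push a small amount of weight from high-weight edges onto $e_0$ along this structure, preserving the fractional perfect matching property. The Dirac hypothesis is what guarantees such structures exist at every edge: by \cref{cla:monotone} and \cref{fact:dirac-bound}, every vertex has degree $\Theta(n^{k-1})$, so each of the $k$ endpoints of $e_0$ is incident to many heavy edges whose total weight is close to $1$, and these edges can be ``rotated'' to redistribute weight. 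Setting $\hat{\xx}:=(\yy_1+\yy_2)/2$ yields the $O(1)$-well-distributed fractional perfect matching we need.

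The main obstacle is formalizing the alternating structure in the hypergraph setting. Unlike the graph case used in \cite{CK-entropy}, where a 4-cycle suffices to swap weights between two pairs of edges, in $k$-uniform hypergraphs one must consider more intricate rotations involving $k{+}1$ or more edges arranged around a vertex set, and the weight shift must simultaneously preserve vertex sums and keep the edge weights inside $[c_2/n^{k-1}, O(1/n^{k-1})]$ at every step. An alternative (and perhaps cleaner) route is to set up the lower bound as an LP feasibility problem -- writing $\yy_2[e]=c_2/n^{k-1}+w[e]$ with $w\ge 0$ and $\sum_{e\ni v}w[e]=1-c_2\deg(v)/n^{k-1}$, which lies in $[\Omega(1),1]$ by the degree bounds -- and then verifying the associated Farkas dual condition using the existence of $\yy_1$ together with the near-regularity of $G$; however, establishing this dual condition without circularity appears to be exactly where the weight-shifting argument is needed.
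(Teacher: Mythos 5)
Your upper-bound construction via the spread measure is correct and matches the first half of the paper's proof. However, your lower-bound step is a genuine gap, which you yourself flag as the ``main obstacle.'' You propose an iterative weight-shifting procedure starting from $\yy_1$, pushing weight onto any low-weight edge $e_0$ via an ``alternating structure,'' but you do not exhibit such structures, nor do you show that the shift can be carried out without creating new violations. This is precisely the difficulty the paper's authors highlight in their introductory discussion: in the hypergraph setting (unlike the $4$-cycle swap in the graph case) one cannot rule out a ``conspiratorial'' arrangement of zero- or near-zero-weight edges that blocks every shifting structure through $e_0$. The paper's weight-shifting argument in the related \cref{lem:exist-normal-entropy} only succeeds after first annealing with a well-distributed matching to force every edge weight to be $\Omega(\delta/n^{k-1})$ --- but the well-distributed matching used for that annealing is exactly what \cref{lem:exist-normal} is supposed to produce, so adopting that fix here is circular. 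Your alternative ``LP/Farkas'' route has the same problem, as you note.

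The paper sidesteps weight-shifting entirely for this lemma. It defines a random perfect matching $M$ in two phases: first run $T=\lfloor\beta n\rfloor$ rounds (with $\beta=\gamma/(10k^2)$) of a \emph{uniform} random greedy matching process (repeatedly pick a uniformly random remaining edge and delete its vertices), then apply \cref{thm:spread} to the remaining hypergraph, which is still $(d,\gamma/2)$-Dirac. The greedy phase gives the lower bound directly: for any fixed edge $e$ and any round $t\le T$, conditional on $e$ not yet being hit (which has probability at least $1/2$), the probability that $e$ is chosen at round $t$ is at least $1/\binom{n}{k}=\Omega_k(n^{-k})$; summing over $T=\Theta_{k,\gamma}(n)$ rounds gives $\Pr[e\in M]\ge\Omega(n^{1-k})$. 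Both phases contribute only $O(n^{1-k})$ to $\Pr[e\in M]$, so the upper bound from the spread measure is not damaged. This is both simpler and avoids the local-structure problem you run into.

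A minor additional confusion: your final plan $\hat\xx=(\yy_1+\yy_2)/2$ is superfluous. If your iterative procedure really keeps $\yy_2$ inside $[c_2/n^{k-1}, O(1/n^{k-1})]$ at every step (as you say it must), then $\yy_2$ alone already is the desired $\hat\xx$; and if $\yy_2$ does not satisfy the upper bound, then neither does the average.
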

We remark that the $d=k-1$ case of \cref{lem:exist-normal} was proved by Glock, Gould, Joos, K\"uhn and Osthus~\cite[Lemma 4.1]{Stefan-counting}, using a switching argument. It seems that this proof approach is also suitable for \cref{lem:exist-normal}, but given recent developments, it is more convenient to deduce \cref{lem:exist-normal} from a so-called \emph{spread measure} independently constructed by Kang, Kelly, K\"{u}hn, Osthus and Pfenninger~\cite{spread-measure} and Pham, Sah, Sawhney and Simkin~\cite{PSS23}. 
Specifically, the following theorem is a corollary of \cite[Theorem~1.5]{spread-measure}\footnote{In the language of \cite[Theorem~1.5]{spread-measure}, we take $s=0$, and use the definition of spreadness for a single edge.}.
\begin{theorem}\label{thm:spread}
Fix constants $1\leq d\leq k-1$ and $\gamma > 0$, let $k\mid n$, and let $G$ be an $n$-vertex $(d,\gamma)$-Dirac $k$-graph. Then, there is a probability distribution on the set of perfect matchings of $G$, such that if a random perfect matching $M$ is sampled from this distribution, we have
    $\Pr[e\in M] \le O(1/n^{k-1})$ for all edges $e$ of $G$.
\end{theorem}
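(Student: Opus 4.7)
The plan is to obtain \cref{thm:spread} as an immediate corollary of the spread measure construction for perfect matchings in Dirac hypergraphs developed by Kang, Kelly, K\"uhn, Osthus and Pfenninger in \cite{spread-measure} (and independently by Pham, Sah, Sawhney and Simkin in \cite{PSS23}). Recall that a probability measure $\mu$ on the perfect matchings of $G$ is said to be \emph{$q$-spread} if for every (partial) matching $T$ of $G$ one has
\[
\Pr_{M\sim \mu}[T\subseteq M] \;\le\; q^{|T|}.
\]
The single-edge bound required here is precisely the $|T|=1$ instance of this inequality.

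Theorem~1.5 of \cite{spread-measure} asserts that for every $(d,\gamma)$-Dirac $k$-graph $G$ on $n$ vertices with $k\mid n$ (and, in fact, more generally, given any pre-selected partial matching of $G$ of size $s$), there is a random perfect matching $M$ of $G$ whose law is $q$-spread with $q = C/n^{k-1}$, where $C = C(d,k,\gamma)$ depends only on the fixed parameters $d$, $k$ and $\gamma$. Specialising to $s=0$ removes the conditioning on a pre-selected partial matching and gives an unconstrained random perfect matching with the same spreadness parameter.

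Given such an $M$, applying spreadness to the singleton $T=\{e\}$ for an arbitrary edge $e$ of $G$ yields
\[
\Pr[e\in M] \;\le\; q \;=\; \frac{C(d,k,\gamma)}{n^{k-1}} \;=\; O\!\left(\frac{1}{n^{k-1}}\right),
\]
which is exactly the conclusion of \cref{thm:spread}. There is essentially no obstacle: the only thing to check is that the notion of spreadness in \cite{spread-measure} matches the single-edge form used here (this is the definitional remark the author flags in the footnote), and that the big-Oh constant is permitted to depend on the fixed parameters $d,k,\gamma$, which it is by our standing convention. One could equally well cite the corresponding statement from \cite{PSS23} to reach the same conclusion.
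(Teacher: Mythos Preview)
Your proposal is correct and matches the paper's approach exactly: the paper also states \cref{thm:spread} as a direct corollary of \cite[Theorem~1.5]{spread-measure}, noting in a footnote that one takes $s=0$ and uses the definition of spreadness for a single edge.
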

\begin{proof}[Proof of \Cref{lem:exist-normal}]
Throughout this proof, we may assume that $n$ is large in terms of $k,\gamma$. 

Note that any random perfect matching $M$ naturally gives rise to a fractional perfect matching $\xx$, taking $\xx[e] \coloneqq \Pr[e\in M]$. \cref{thm:spread} already gives us a random perfect matching $M$ with the desired upper bound on $\Pr[e\in M]$; we just need to make an adjustment to $M$ to guarantee the desired lower bound.

Specifically, we define our random perfect matching $M$ by combining \cref{thm:spread} with a few rounds of a random greedy matching process, as follows.
\begin{itemize}
    \item Set $G_0=G$ and $M_0=\emptyset$, let $\beta = \gamma/(10 k^2)$ and let $T=\lfloor\beta n\rfloor$.
        \item For $t\in \{1,\dots,T\}$, choose a uniformly random edge $e$ in $G_{t-1}$, and add it to $M_{t-1}$ to form $M_t$. Then delete all vertices of $e$ from $G_{t-1}$ to form $G_t$.
        \item Note that \[\delta_d(G_T)\ge (\alpha_d(k)+\gamma)\binom{n-d}{k-d}-k\cdot \beta n\cdot \binom{n-d-1}{k-d-1}\ge (\alpha_d(k)+\gamma/2)\binom{n-d}{k-d},\] because for a set $S$ of $d$ vertices, deleting any other vertex can reduce the degree of $S$ by at most $\binom{n-d-1}{k-d-1}$. So, $G_T$ is $(d,\gamma/2)$-Dirac, and we can apply \Cref{thm:spread} to obtain a random perfect matching of $G_T$. Add this to $M_T$ to obtain a random perfect matching $M$ of $G$.
    \end{itemize}
    The number of edges in $G_t$, for each $t\le T$, is at least
\begin{equation}(\alpha_d(k)+\gamma)\binom nk-k\cdot\beta n\cdot \binom{n-1}{k-1}\ge \frac12\binom nk\,,\label{eq:uniform-removed-edges}\end{equation}
recalling \cref{fact:dirac-bound}.
    So, for each edge $e$, we have
    \[\Pr[e\in M]=\Pr[e\in M_T]+\Pr[e\in M\setminus M_T]\le \frac{T}{\binom nk/2}+O(1/n^{k-1})=O(1/n^{k-1})\,.\]
    It remains to prove a corresponding lower bound  $\Pr[e\in M]=\Omega_{k,\gamma}(1)$.
    
    For $1\leq t\leq T$, let $\calE_t$ be the event that $e$ is selected in the $t$-th round (i.e., $e\in M_t\setminus M_{t-1}$), and let $\calF_t$ the event that $e\in G_{t-1}$ (i.e., none of the edges intersecting $e$ have been selected by round $t$).
    By definition, the events $\calE_1 \cap \calF_1,\dots, \calE_T\cap \calF_T$ are disjoint, so
    \[\Pr[e\in M]\ge \Pr[e\in M_T] = \Pr[\calF_1 \cap \calE_1] + \cdots + \Pr[\calF_T \cap \calE_T]\,.\]
So, recalling that $T=\Omega_{k,\gamma}(n)$, it suffices to prove that $\Pr[\calF_t \cap \calE_t]=\Pr[\calE_t\mid \calF_t] \Pr[\calF_t]$ is at least $\Omega_k(n^{-k})$, for each $t\le T$. To see this, note that $G$ has at most $k\cdot \binom{n-1}{k-1}$ edges intersecting $e$, and recall \cref{eq:uniform-removed-edges}, so
\[\Pr[\calF_t]\ge 1-t\cdot \frac{k\cdot \binom{n-1}{k-1}}{\binom nk/2}\ge 1/2\,,\qquad \Pr[\calE_t|\calF_t]\ge \frac{1}{\binom n k}=\Omega_k(n^{-k})\,. \qedhere\]
\end{proof}

Any $O(1)$-well-distributed fractional matching already has entropy which differs by at most $O(n)$ from the maximum entropy, as follows.
\begin{proposition}\label{lem:normal-entropy}
    Fix $k\ge 2$ and let $G$ be a $k$-graph on $n$ vertices. Suppose $\xx$ is a $C$-well-distributed fractional perfect matching in $G$ for some constant $C>0$. Then, $h(G) - h(\xx) \le (2\ln C/k) n$.
\end{proposition}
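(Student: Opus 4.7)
The plan is to prove the proposition by combining the two bounds in \cref{cla:jensen-entropy}: the upper bound on $h(G)$ against an appropriate lower bound on $h(\xx)$ coming from the $C$-well-distributedness assumption. This should give the result essentially immediately, without any further clever argument.

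First, by the upper bound in \cref{cla:jensen-entropy} (which applies to the supremum as well, since it bounds every fractional perfect matching), we have
\[
h(G) \;\le\; (1-1/k)\,n\ln n.
\]
Next, since $\xx$ is $C$-well-distributed, we have $\xx[e]\le C/n^{k-1}$ for every edge $e$, so we may apply the lower bound in \cref{cla:jensen-entropy} with $L = C/n^{k-1}$. Combined with the trivial upper bound $|E(G)|\le \binom{n}{k}\le n^k/k!$, this yields
\[
h(\xx) \;\ge\; \frac{n}{k}\,\ln\!\left(\frac{n}{(C/n^{k-1})^2\, k\, |E(G)|}\right) \;\ge\; \frac{n}{k}\,\ln\!\left(\frac{n^{2k-1}(k-1)!}{C^{2}\,n^{k}}\right) \;=\; (1-1/k)\,n\ln n \;+\; \frac{n}{k}\bigl(\ln((k-1)!) - 2\ln C\bigr).
\]
Subtracting, the $(1-1/k)n\ln n$ terms cancel, and since $\ln((k-1)!)\ge 0$ for $k\ge 2$, we obtain
\[
h(G) - h(\xx) \;\le\; \frac{n}{k}\bigl(2\ln C - \ln((k-1)!)\bigr) \;\le\; \frac{2\ln C}{k}\,n,
\]
as desired.

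There is essentially no obstacle here; the proposition is a direct consequence of the two Jensen-type bounds already recorded in \cref{cla:jensen-entropy}. The only thing to verify is that the upper bound $h(G)\le (1-1/k)n\ln n$ is genuinely an upper bound on the supremum (not just on a fixed $\xx$), which is immediate because the bound in \cref{cla:jensen-entropy} applies to every fractional perfect matching uniformly.
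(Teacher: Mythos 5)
Your proof is correct and takes essentially the same approach as the paper: combine the Jensen upper bound $h(G)\le(1-1/k)n\ln n$ with the Jensen lower bound on $h(\xx)$ instantiated at $L=C/n^{k-1}$ and $|E(G)|\le\binom{n}{k}$. Your slightly tighter estimate $\binom{n}{k}\le n^k/k!$ just produces the extra nonnegative $\ln((k-1)!)$ term, which you correctly discard.
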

\begin{proof}
    Since $\xx$ is $C$-well-distributed, we have $\xx[e] \leq C/n^{k-1}$ for all edges $e$.
    Thus, plugging in $L = C/n^{k-1}$ and using the trivial bound $|E(G)|\leq \binom{n}{k}$ in \Cref{cla:jensen-entropy}, we have 
    \[
        h(\xx) 
        \geq (1-1/k)n \ln n - \frac{2\ln C}{k}n\,.
    \]
   On the other hand, the upper bound in \Cref{cla:jensen-entropy} implies $h(G)\leq (1-1/k)n\ln n$. The desired result follows.
\end{proof}

\subsection{Shifting structures}\label{subsec:shifting}
Next, we define the notion of a \emph{shifting structure}, which we will use to ``move weight around'' in a fractional perfect matching. We remark that this same structure was considered by H\`{a}n, Person and Schacht~\cite{Absorbing} in their proof of the so-called \emph{strong absorbing lemma}.
\begin{definition}[Shifting structure]\label{def:shifting-structure}
    Let $k\ge 2$ and consider a $k$-graph $G$.
    Let $e = \{v_1,\dots, v_k\}$ and $f = \{u_1,\dots, u_k\}$ be two edges in $E$ such that $e\cap f = \{v_1\} = \{u_1\}$. A \emph{shifting structure} on $(e,f)$ is a sequence $\mathcal U = (U_2,\dots, U_k)$ of sets of $k-1$ vertices, satisfying the following conditions:
    \begin{itemize}
        \item $U_2,\dots, U_k$ do not share any vertices with $e$ or $f$.
        \item $U_2,\dots, U_k$ are pairwise disjoint. 
        \item For each $2\leq i \leq k$, both $e_i \coloneqq U_i\cup \{u_i\}$ and $f_i\coloneqq U_i \cup \{v_i\}$ form edges of $G$.
    \end{itemize}
\end{definition}
For convenience, we let $e_1=e$ and $f_1=f$. See \cref{fig:swap} for an illustration.
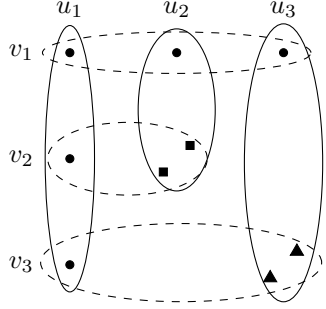
\begin{figure}[ht]
    \centering
    \begin{tikzpicture}    
    \draw[fill=black] (0,0) circle[radius=1.5pt] node(v3) {};
    \draw[fill=black] (0,40pt) circle[radius=1.5pt] node(v2) {};
    \draw[fill=black] (0,80pt) circle[radius=1.5pt] node(v1) {};
    \draw[fill=black] (40pt,80pt) circle[radius=1.5pt] node(u2) {};
    \draw[fill=black] (80pt,80pt) circle[radius=1.5pt] node(u3) {};
    \node(U21)[draw, inner sep = 1.5pt, rectangle, fill=black] at (35pt,35pt) {};
    \node(U22)[draw, inner sep = 1.5pt, rectangle, fill=black] at (45pt,45pt) {};
    \node(U31)[draw, inner sep = 1pt, regular polygon, regular polygon sides=3, fill=black] at (85pt,5pt) {};
    \node(U32)[draw, inner sep = 1pt, regular polygon, regular polygon sides=3, fill=black] at (75pt,-5pt){};
    % \draw[fill=black] (35pt,35pt) circle[radius=1.5pt] node(U21) {};
    % \draw[fill=black] (45pt,45pt) circle[radius=1.5pt] node(U22) {};
    % \draw[fill=black] (85pt,5pt) circle[radius=1.5pt] node(U31) {};
    % \draw[fill=black] (75pt,-5pt) circle[radius=1.5pt] node(U32) {};

    \node[left=10pt] at (v1) {$v_1$};
    \node[left=10pt] at (v2) {$v_2$};
    \node[left=10pt] at (v3) {$v_3$};
    \node[above=10pt] at (v1) {$u_1$};
    \node[above=10pt] at (u2) {$u_2$};
    \node[above=10pt] at (u3) {$u_3$};

    \node[fit=(v1)(v3), ellipse, draw, inner xsep=3pt, inner ysep=-8pt] {};
    \node[fit=(v1)(u3), ellipse, draw, dashed, inner xsep=-8pt, inner ysep=2pt] {};
    \node[fit=(v2)(U21)(U22), ellipse, draw, dashed, inner xsep=-4pt, inner ysep=3pt] {};
    \node[fit=(u2)(U21)(U22), ellipse, draw , inner xsep=3.5pt, inner ysep=-3.5pt] {};
    \node[fit=(v3)(U31)(U32), ellipse, draw, dashed, inner xsep=-8pt, inner ysep=3pt] {};
    \node[fit=(u3)(U31)(U32), ellipse, draw, inner xsep=3.5pt, inner ysep=-8pt] {};
\end{tikzpicture}
    \caption{We illustrate a shifting structure for $k=3$. The three dotted edges, from top to bottom, are $f_1(= f)$, $f_2$ and $f_3$, and the three solid edges, from left to right, are $e_1 (= e)$, $e_2$ and $e_3$. The set $U_2$ contains the two square-shaped vertices and the set $U_3$ contains the two triangle-shaped vertices.}
    \label{fig:swap}
\end{figure}

We can use a shifting structure to move weight from $e_1,\dots,e_k$ to $f_1,\dots,f_k$, as follows.

\begin{definition}[shift operation]
    Given a $k$-graph $G$ and two edges $e,f\in E(G)$, consider a shifting structure $\mathcal U = (U_2,\dots, U_k)$ on $(e,f)$, and consider a fractional perfect matching $\xx$. For $\Delta\ge 0$ satisfying $\Delta\le \min_i \xx[e_i]$ and $\Delta\le \min_i (1-\xx[f_i])$, let $\operatorname{shift}_\Delta(\xx, \mathcal U)$ be  the fractional perfect matching obtained from $\xx$ by changing $\xx[e_i]$ to $\xx[e_i] - \Delta$ and changing $\xx[f_i]$ to $\xx[f_i] + \Delta$ for $1\leq i\leq k$.
\end{definition}

If a fractional matching is ``imbalanced'' on a shifting structure, in the sense that the product $\xx[e_1]\cdots \allowbreak\xx[e_k]$ is larger than the product $\xx[f_1]\cdots\xx[f_k]$, we can use that imbalance to perform a shifting operation which increases the entropy. In particular, we can do this if $\xx[e_1]$ is much larger than $\xx[f_1],\dots,\xx[f_k]$, and $\xx[e_2],\dots,\xx[e_k]$ are not too small, as follows.

\begin{lemma} \label{lem:weight-shift}
   Consider a $k$-graph $G$ and a fractional perfect matching $\xx$.
    Let $\mathcal U$ be a shifting structure on $(e,f)$. For some $\Delta,\eta>0$, suppose that $\xx[e_i] \geq 2\Delta$ and $\xx[f_i] \leq \eta-\Delta$ for all $i\in \{1,\dots,k\}$. 
    Then
    \[
        h(\operatorname{shift}_\Delta(\xx, \mathcal U))-h(\xx)\ge \Delta \ln \left(\frac{\xx[e_1]\Delta^{k-1}}{2\eta^k}\right)\,.
    \]
\end{lemma}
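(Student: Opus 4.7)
The plan is to parametrize a one-parameter family of fractional perfect matchings between $\xx$ and $\operatorname{shift}_\Delta(\xx,\mathcal U)$, differentiate the entropy along this family, bound the derivative uniformly from below using the given assumptions, and integrate. First I would set, for $\delta\in[0,\Delta]$, the matching $\xx^{(\delta)}$ obtained by replacing $\xx[e_i]$ with $\xx[e_i]-\delta$ and $\xx[f_i]$ with $\xx[f_i]+\delta$ for each $1\le i\le k$. This is a valid fractional perfect matching for every such $\delta$: the defining property of a shifting structure (together with $v_1=u_1$) ensures that the multisets of endpoints of $e_1,\dots,e_k$ and of $f_1,\dots,f_k$ coincide, so every vertex-degree constraint is preserved; and the feasibility bounds $\xx[e_i]-\delta\ge 2\Delta-\Delta=\Delta\ge 0$ and $\xx[f_i]+\delta\le \eta$ follow from the hypotheses.

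Next, writing $H(\delta)\coloneqq h(\xx^{(\delta)})$, all summands in the entropy coming from edges outside $\{e_1,\dots,e_k,f_1,\dots,f_k\}$ are $\delta$-independent. Using $\tfrac{d}{dt}(-t\ln t)=-\ln t-1$ and the chain rule, term-by-term differentiation gives the clean expression
\[
H'(\delta)=\sum_{i=1}^k \ln\frac{\xx[e_i]-\delta}{\xx[f_i]+\delta}.
\]
(If some $\xx[f_i]=0$ the derivative blows up as $\delta\to 0^+$, but this is harmless because $-t\ln t$ is absolutely continuous, so the fundamental theorem of calculus still applies as an improper integral, and a lower bound on $H'$ is what we need anyway.)

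Now I would apply the hypotheses to obtain a uniform lower bound on $H'(\delta)$ for $\delta\in[0,\Delta]$. For every $i$, $\xx[f_i]+\delta\le \xx[f_i]+\Delta\le \eta$. For $i\ge 2$, the bound $\xx[e_i]\ge 2\Delta$ yields $\xx[e_i]-\delta\ge \xx[e_i]-\Delta\ge \Delta$. For $i=1$, the same bound yields the weaker estimate $\xx[e_1]-\delta\ge \xx[e_1]-\Delta\ge \xx[e_1]/2$, and this asymmetric treatment is what ultimately produces the factor of $2$ in the denominator of the claimed bound. Combining,
\[
H'(\delta)\ge \ln\frac{\xx[e_1]/2}{\eta}+(k-1)\ln\frac{\Delta}{\eta}=\ln\frac{\xx[e_1]\Delta^{k-1}}{2\eta^k}.
\]

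Finally, integrating this uniform bound over $[0,\Delta]$ gives
\[
h(\operatorname{shift}_\Delta(\xx,\mathcal U))-h(\xx)=H(\Delta)-H(0)\ge \Delta\,\ln\frac{\xx[e_1]\Delta^{k-1}}{2\eta^k},
\]
which is the desired inequality. I do not foresee a genuine obstacle: the argument is a straightforward one-variable calculus computation. The only conceptual point is the asymmetric treatment of the distinguished edge $e_1$ (we track its weight rather than bound it by $\Delta$), which is forced by the lemma's use in \cref{sec:prelim}-style arguments where $\xx[e_1]$ may be much larger than $\Delta$ and one wants to capture that gain.
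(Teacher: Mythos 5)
Your proof is correct and follows essentially the same route as the paper: parametrize by the shift amount $\delta\in[0,\Delta]$, compute $H'(\delta)=\sum_i\ln\bigl((\xx[e_i]-\delta)/(\xx[f_i]+\delta)\bigr)$, bound it uniformly from below using the hypotheses (with the asymmetric treatment of $e_1$ producing the factor of $2$), and integrate. The only additions beyond the paper's two-line argument are the explicit check that each $\xx^{(\delta)}$ is a fractional perfect matching and the remark about absolute continuity when some $\xx[f_i]=0$; both are fine and harmless.
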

Note that if $\xx[e_1]\Delta^{k-1}> 2\eta^k$, the right-hand side is positive, meaning that the shift operation increases the entropy.
\begin{proof}
    For $0\le q\le \Delta$, let  $
        f(q)=h(\operatorname{shift}_q(\xx, \mathcal U))
    $.
    Computing the derivative, we see that
    \[
        f^\prime(q) = \ln\left( \frac{(\xx[e_1]-q)\cdots (\xx[e_k]-q)}{(\xx[f_1] + q)\cdots  (\xx[f_k] + q)}\right)\ge \ln\left(\frac{(\xx[e_1]/2) \Delta^{k-1}}{\eta^k}\right)\,,
    \]
    for $q\le \Delta$, and the desired result follows.
\end{proof}

If every vertex in $e\cup f$ has sufficiently large degree, then it is easy to find shifting structures, as follows.
\begin{lemma}\label{lem:find-structure}
Let $G$ be an $n$-vertex $k$-graph, and fix two edges $e = \{v_1,\dots, v_k\}$ and $f = \{u_1,\dots, u_k\}$ intersecting in a single vertex $u_1=v_1$. Suppose each of $u_2,\dots,u_k,v_2,\dots,v_k$ has degree at least $(1/2+\beta)\binom{n-1}{k-1}$. Then, if $n$ is sufficiently large (in terms of $k,\beta$), there is a shifting structure on $(e,f)$.
\end{lemma}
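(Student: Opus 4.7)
The plan is to build the shifting structure greedily, constructing $U_2,\dots,U_k$ one at a time. At step $i$ I will need to find a $(k-1)$-set $U_i$, disjoint from $e\cup f$ and from the previously chosen $U_2,\dots,U_{i-1}$, such that both $U_i\cup\{u_i\}$ and $U_i\cup\{v_i\}$ belong to $E(G)$. Since $|e\cup f|=2k-1$ and each $U_j$ has size $k-1$, the total number of ``already-used'' vertices to be avoided is bounded by a constant depending only on $k$.

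The main point is to show that at each step there are many valid choices for $U_i$. Let $A_i$ be the collection of $(k-1)$-subsets $W\subseteq V(G)\setminus\{u_i\}$ with $W\cup\{u_i\}\in E(G)$, and define $B_i$ analogously for $v_i$; by the degree hypothesis, $|A_i|,|B_i|\ge (1/2+\beta)\binom{n-1}{k-1}$. Discarding the $(k-1)$-sets that contain the ``other'' vertex (at most $\binom{n-2}{k-2}$ in each case) and applying inclusion--exclusion inside $\binom{V(G)\setminus\{u_i,v_i\}}{k-1}$, I get that the number of $(k-1)$-subsets $U\subseteq V(G)\setminus\{u_i,v_i\}$ with both $U\cup\{u_i\},U\cup\{v_i\}\in E(G)$ is at least
\[
2\bigl((1/2+\beta)\tbinom{n-1}{k-1}-\tbinom{n-2}{k-2}\bigr)-\tbinom{n-2}{k-1}
\;=\;2\beta\tbinom{n-1}{k-1}-\tbinom{n-2}{k-2},
\]
which is $(2\beta-o(1))\binom{n-1}{k-1}=\Theta_{k,\beta}(n^{k-1})$.

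From this quantity I must subtract the $(k-1)$-subsets that hit the forbidden vertex set consisting of $(e\cup f)\cup U_2\cup\dots\cup U_{i-1}$. Since this forbidden set has size at most $2k-1+(k-2)(k-1)=O_k(1)$, the number of such bad subsets is at most $O_k(n^{k-2})$. For $n$ sufficiently large in terms of $k$ and $\beta$, this is strictly less than the $\Theta_{k,\beta}(n^{k-1})$ lower bound above, so a valid choice of $U_i$ exists. Iterating for $i=2,\dots,k$ produces the desired shifting structure.

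I do not anticipate a substantive obstacle here: the lemma is a routine greedy argument, with the only subtlety being the careful bookkeeping when converting the degree lower bound on $u_i$ and $v_i$ into a lower bound on their ``common $(k-1)$-neighbourhood'' in $V(G)\setminus\{u_i,v_i\}$ via inclusion--exclusion.
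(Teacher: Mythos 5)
Your proposal is correct and follows essentially the same route as the paper: greedily choose $U_2,\dots,U_k$, use inclusion--exclusion to show that each $u_i,v_i$ pair has $\Omega_\beta(n^{k-1})$ common link $(k-1)$-sets, and observe that the $O_k(n^{k-2})$ sets touching $e\cup f\cup U_2\cup\dots\cup U_{i-1}$ can be avoided for large $n$. Your version is marginally more careful about which universe the inclusion--exclusion takes place in, but the argument is the same.
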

\begin{proof}
A shifting structure is a sequence of sets $U_2,\dots,U_k$; we choose these sets one-by-one in a greedy fashion. Indeed, suppose that we have made a choice for $U_2,\dots,U_{i-1}$. To choose $U_i$, first note that by inclusion-exclusion, there are at least $2\cdot (1/2 + \beta)\binom{n-1}{k-1} - \binom{n}{k-1}\ge \beta\binom{n-1}{k-1}$ sets of $k-1$ vertices forming an edge with both $v_i$ and $u_i$. At most $O_k(n^{k-2})$ of these sets involve other vertices of $e\cup f$ or other vertices of $U_2\cup \dots\cup U_{i-1}$, so there is at least one such set which is a valid choice for $U_i$.
\end{proof}

\subsection{Uniformising the maximum-entropy fractional perfect matching}
Now we use the preparations in \cref{subsec:normal,subsec:shifting} to prove \cref{lem:exist-normal-entropy}. As mentioned in the introduction, in the case $k=2$ (i.e., the graph case), Cuckler and Kahn~\cite{CK-counting} showed that the maximum-entropy fractional matching $\xx^\star$ is already well-distributed \footnote{For the curious reader, this argument appears in \cite[Lemma~3.2]{CK-counting}. (It is stated with an assumption of ``$\xi$-normality'', which is automatically satisfied in the setting of this paper)}.
Specifically, they proved that if the entropy-maximising fractional perfect matching $\xx^\star$ were not well-distributed, there would be a way to shift weight between high and low weight edges to increase the entropy, contradicting the entropy-maximality of $\xx^\star$. (This shifting is done with a 4-cycle, which can actually be interpreted as the $k=2$ case of the shifting structure defined in \cref{def:shifting-structure}).

One might hope to do the same for higher-uniformity hypergraphs,  using the shifting operation defined in the previous section. Unfortunately, the Cuckler--Kahn argument does not seem to generalise to higher-uniformity hypergraphs: we were not able to rule out the possibility that the maximum-entropy fractional matching $\xx^\star$ has a very high-weight edge $e$ (and is therefore not well-distributed), but that there is a ``conspiratorial'' arrangement of zero-weight edges 
in all the shifting structures which involve $e$, which prevent us from using shifting operations to increase the entropy of $\xx^\star$ by redistributing the weight of $e$.

To overcome this, we introduce an ``annealing'' technique, using the well-distributed fractional perfect matching $\hat \xx$ from \cref{lem:exist-normal}. Indeed, for the maximum-entropy fractional perfect matching $\xx^\star$, we can consider the ``annealed'' fractional perfect matching $\xx=(1-\delta)\xx^\star+\delta\hat \xx$, for some small $\delta>0$. The contribution from $\hat\xx$ means that all edges have weight at least $\Omega_{k,\gamma}(\delta/n^{k-1})$ 
, meaning that we can use \cref{lem:weight-shift,lem:find-structure} to perform shifting operations to redistribute some of the weight on high-weight edges, increasing the entropy. Either we can do so many shifting operations that the entropy rises above $h(\xx^\star)$ (i.e., the entropy gain from shifting overcomes the entropy loss from annealing), or we end up at a well-distributed fractional perfect matching (whose entropy is not much less than $h(\xx)$, if the annealing parameter $\delta$ is sufficiently small). The former case is impossible, by the entropy-maximality of $\xx^\star$, so we obtain the desired well-distributed fractional perfect matching.

\begin{proof}[Proof of \cref{lem:exist-normal-entropy}]
Throughout this proof, we can (and do) assume that $n$ is large, and $\varepsilon$ is small, with respect to $k,\gamma$.

By \cref{lem:exist-normal,lem:normal-entropy}, there is $C=O(1)$ and a fractional perfect matching $\hat{\xx}$ such that $h(\xx^\star)-h(\hat{\xx})\le C n$, and $\hat{\xx}[e]\ge 1/(Cn^{k-1})$ for each edge $e$.

Let $\xx_0=(1-\varepsilon/C) \xx^\star+ (\epsilon/C)\hat\xx$. We think of $\xx_0$ as an ``annealed'' version of $\xx^\star$, which has slightly lower entropy but has a guaranteed lower bound on the weight of each edge. Indeed, by concavity of the entropy function $x\mapsto x\ln(1/x)$, we have
\begin{equation}h(\xx_0)\geq (1-\varepsilon/C)h(\xx^\star) - (\varepsilon/C)h(\hat{\xx}) =  h(\xx^\star)-(\varepsilon/C)(h(\xx^\star)-h(\hat \xx))\ge h(\xx^\star)-\varepsilon n\,,\label{eq:near-maximal-entropy}\end{equation}
i.e., $\xx_0$ still has near-maximum entropy, and we have
\begin{equation}\xx_0[e]\ge \frac{\varepsilon}{C^2n^{k-1}}=\frac{\Omega_{k,\gamma}(\varepsilon)}{n^{k-1}}\text{ for each edge }e\,.\label{eq:weight-lower-bound}\end{equation}

Now, recall that our objective is to find a fractional perfect matching $\xx$ satisfying $h(\xx)\ge h(\xx^\star)-\varepsilon n$ and $\Omega_{k,\gamma}(\varepsilon^{3k})/n^{k-1}\le \xx[e]\le O(\varepsilon^{-3k})/n^{k-1}$ for every edge $e$. Note that $\xx_0$ already satisfies the desired entropy lower bound, and the desired lower bound on all the edge-weights; the only problem is that it might not satisfy the desired upper bound on the edge-weights. We will use a sequence of shift operations, starting from $\xx_0$, to obtain a fractional perfect matching which satisfies all the desired properties.

\medskip\noindent\textbf{An iterated shifting algorithm.} 
Define
\[\eta=\frac{4/\gamma}{\binom{n-1}{k-1}}=\frac{\Omega_{k,\gamma}(1)}{n^{k-1}}, \qquad \Delta=\frac{\varepsilon}{2C^2n^{k-1}}=\frac{O(\varepsilon)}{n^{k-1}}, \qquad D=\varepsilon^{-3k}.\] 
We greedily shift to obtain a sequence of fractional perfect matchings $\xx_1,\dots,\xx_T$, according to the following algorithm. For each $t\ge 1$, a \emph{good configuration} at step $t$ is a pair of edges $(e,f)$, intersecting in a single vertex, together with a shifting structure for $(e,f)$, such that (with respect to this shifting structure) we have $\xx_t[e_1]\ge D/n^{k-1}\ge 2\Delta$ (for small enough $\varepsilon$)
and $\xx_t[e_1],\dots,\xx_t[e_k]\ge 2\Delta$ and $\xx_t[f_1],\dots,\xx_t[f_k]\le \eta-\Delta$. If there is at least one good configuration at step $i$, then we choose one arbitrarily and set $\xx_{i+1}=\operatorname{shift}_\Delta(\xx_i,\mathcal U)$ (where $\mathcal U$ is the shifting structure associated with the good configuration). Otherwise, if no good configuration exists, set $T=i$ 
and terminate the algorithm.

Assuming $\varepsilon$ is sufficiently small, by \cref{lem:weight-shift} each step of our algorithm significantly increases the entropy. Namely, for each $0\le i<T$ we have
\[h(\xx_{i+1})\ge h(\xx_i)+\Delta\ln\left(\frac{D \Delta^{k-1}}{2n^{k-1}\eta^k}\right)\ge h(\xx_i)+\Delta\ln(D)/2\]
(here we are using that $\Delta^{k-1}/(2n^{k-1}\eta^k) \geq 1/\sqrt{D}$, 
assuming that $\varepsilon$ is sufficiently small with respect to $k,\gamma$).
So, recalling \cref{eq:near-maximal-entropy} and the definitions of the parameters $\Delta,D$, we have \begin{equation}T\le \frac{\varepsilon n}{\Delta\ln(D)/2}=O\biggl(\frac{n^k}{\ln(1/\varepsilon)}\biggr)\,.\label{eq:T-bound}\end{equation}

\medskip\noindent\textbf{Reducing to an edge-weight upper bound.} Recalling \cref{eq:near-maximal-entropy}, and noting that each shifting operation can only increase the entropy, we have 
\[h(\xx_T)\ge h(\xx_0)\ge h(\xx^\star)-\varepsilon n\,.\]
Also, recall from \cref{eq:weight-lower-bound} that in $\xx_0$ every edge has weight at least $2\Delta$. We only ever decrease the weight of an edge (by $\Delta$) when the weight of that edge is at least $2\Delta$, so in $\xx_T$ every edge has weight at least $\Delta\ge O(\varepsilon)/n^{k-1}$. That is to say, to prove that $\xx=\xx_T$ is  $O_{k,\gamma}(\varepsilon^{-3k})$-well-distributed (completing the proof of the lemma), it suffices to show that in $\xx_T$ every edge has weight less than $D/n^{k-1}=O(\varepsilon^{-3k})/n^{k-1}$.

\medskip\noindent\textbf{High-weight edges imply good configurations.}
Suppose for the purpose of contradiction that there is an edge $e$ with $\xx_T[e]\ge D/n^{k-1}$. We will show that there is a good configuration at step $T$ (contradicting the definition of $T$ as the final step of our iterated shifting algorithm). This will be accomplished by deleting a few edges of $G$ (whose weights are not compatible with a good configuration) and then applying \cref{lem:find-structure}.

First, let $G_{>\eta-\Delta}$ be the $k$-graph consisting of all edges $g$ of $G$ with $\xx_T[g]>\eta-\Delta$. By the definition of $\eta$ and $\Delta$, assuming $\varepsilon$ (therefore $\Delta$) is sufficiently small, we have $\eta-\Delta>0$ and $1/(\eta-\Delta) \le (\gamma/2)\binom{n-1}{k-1}$. 
For every vertex $v$, since $\xx_T$ is a fractional perfect matching, the sum of weights of edges including $v$ is exactly 1, so there are at most $1/(\eta-\Delta)\le (\gamma/2)\binom{n-1}{k-1}$ edges containing $v$ which have weight greater than $\eta-\Delta$. That is to say, $G_{>\eta-\Delta}$ has maximum degree at most $(\gamma/2)\binom{n-1}{k-1}$.

Then, let $G_{<2\Delta}$ be the $k$-graph consisting of the edges $g$ of $G$ which satisfy $\xx_T[g]<2\Delta$. Recalling \cref{eq:weight-lower-bound}, each edge of $G_{<2\Delta}$ must have been involved in one of our $T$ shifting operations (each of which decreases the weight of exactly $k$ edges), so by \cref{eq:T-bound}, $G_{<2\Delta}$ has at most $kT\le O(n^k/\ln(1/\varepsilon))\le (\gamma/(8k))\binom nk$ edges (assuming $\varepsilon$ is sufficiently small).
That is to say, $G_{<2\Delta}$ has average degree at most $(\gamma/(8k))\binom{n-1}{k-1}$, meaning that it has at most $n/(4k)$ vertices which have degree greater than $(\gamma/2)\binom{n-1}{k-1}$. Call these vertices \emph{bad}.

Fix a vertex $v_1\in e$. By \cref{cla:monotone}, $v_1$ is contained in at least $(1/2+\gamma)\binom{n-1}{k-1}$ edges of $G$. At most $(n/(4k)+k-1)\binom{n-2}{k-2}\le (1/2)\binom{n-1}{k-1}$ of these edges also contain a bad vertex or a vertex in $e\setminus\{v_1\}$. So, there is an edge $f$, containing no bad vertices, such that $e\cap f=\{v_1\}$.

Let $G'$ be the hypergraph obtained from $G$ by deleting all edges of $G_{>\eta-\Delta}$ which do not intersect $f$, and deleting all edges of $G_{<2\Delta}$ which do not intersect $e$. By construction, we have deleted at most $(\gamma/2)\binom{n-1}{k-1}$ edges incident to every vertex in $(e\cup f)\setminus\{v_1\}$. By \cref{cla:monotone,fact:dirac-bound}, we have $\delta_1(G)\ge (1/2+\gamma)\binom{n-1}{k-1}$, so in $G'$, every vertex in $(e\cup f)\setminus\{v_1\}$ has degree at least $(1/2+\gamma/2)\binom{n-1}{k-1}$.  Using \cref{lem:find-structure}, we deduce that there is a shifting structure $\mathcal U$ for $(e,f)$ in $G'$. By the definition of $G'$ via $G_{>\eta-\Delta}$ and $G_{<2\Delta}$, with respect to $\mathcal U$ we have $\xx_2[e_1],\dots,\xx_2[e_k]\ge 2\Delta$ and $\xx_2[f_1],\dots,\xx_2[f_k]\le \eta-\Delta$. Recalling our assumption $\xx_T[e_1]=\xx_T[e]\ge D/n^{k-1}$, we see that $\mathcal U$, together with $(e,f)$, forms a good configuration, yielding the desired contradiction.
\end{proof}

\section{A random greedy matching process} \label{section:random-greedy}
The goal of this section is prove the lower bound in \Cref{thm:num-of-PM}. We do this by analysing a random greedy matching process, guided by a fractional perfect matching.

\begin{definition}[Random greedy matching process]\label{def:random-greedy}
Consider a $k$-graph $G$, and let $\xx$ be a fractional perfect matching in $G$. 
Then, the \emph{greedy matching process} in $G$ with respect to $\xx$ is defined
as follows. Let $G(0)=G$ and for each $i\ge1$: choose a random edge
$e(i)$ in $G(i-1)$, with probability proportional to $\xx[e(i)]$,
and delete all the vertices in $e(i)$ from $G(i-1)$ to obtain a
$k$-graph $G(i)$. This process cannot continue forever: let $G(M)$ be the first graph
in the sequence with no positive-weight edges.
\end{definition}

First, in \cref{sec:random-greedy-analysis}, we consider certain statistics that evolve with the process, and show that these statistics concentrate around certain explicit values. This analysis requires that $\xx$ is well-distributed (recall the definition from \cref{def:well-distributed}).
Then, in \cref{sec:random-greedy-deduction}, we use this analysis to show that the greedy matching process gives rise to a high-entropy probability distribution on perfect matchings. By \cref{cla:unif-maximizes}, this implies a lower bound on the number of perfect matchings.

\subsection{Analysis of the greedy matching process}\label{sec:random-greedy-analysis}
Our objective in this section is to show that if $\xx$ is $n^{c}$-well-distributed for sufficiently small $c$, then our greedy matching process is likely to continue until nearly all vertices are gone. We are also able to track the evolution of various important statistics of the process. These statistics are chosen to evince that the almost-perfect matching produced by the process can typically be completed to a perfect matching, and that the resulting distribution on perfect matchings has high entropy.
\begin{theorem}
\label{lem:greedy-matching-analysis}For any constant $k\ge 2$, there is a constant $c>0$ such
that the following holds. Let $G$ be a $k$-graph, and let $\xx$ be an $n^{c}$-well-distributed fractional perfect matching of $G$. Then, consider
the greedy matching process defined in \cref{def:random-greedy} (in $G$, with respect to $\xx$), producing a sequence of $k$-graphs $G(0),\dots,G(M)$. With probability at least $1-\exp(-{n^c})$: we have 
\begin{equation}
M\ge(1-n^{-c})n/k\,,\label{eq:M-1}
\end{equation}
and for each $i\le (1-n^{-c})n/k$ and each set $S$ of at most $k-1$ vertices of $G(i)$,
\begin{align}
\sum_{e\in E(G(i))}\xx[e] & =(1\pm n^{-c})\left(\frac{n/k-i}{n/k}\right)^{k}\frac{n}{k}\,,\label{eq:weight-2}\\
\sum_{e\in E(G(i))}\xx[e]\ln\left(\frac{1}{\xx[e]}\right) & =(1\pm n^{-c})\left(\frac{n/k-i}{n/k}\right)^{k}h(\xx)\,,\label{eq:entropy-3}\\
\deg_{G(i)}(S)&=\left(\frac{n/k-i}{n/k}\right)^{k-|S|}\deg_{G}(S)\pm n^{k-|S|-c}\,.\label{eq:degree-4}
\end{align}
\end{theorem}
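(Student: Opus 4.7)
The plan is to apply the differential equation method, simultaneously tracking all the quantities of interest and concluding with a martingale concentration inequality. For each set $S\subseteq V(G)$ with $|S|\le k-1$, introduce the variables
\[
    D_S(i)=\deg_{G(i)}(S),\qquad W_S(i)=\sum_{e\in E(G(i)),\,e\supseteq S}\xx[e],\qquad H(i)=\sum_{e\in E(G(i))}\xx[e]\ln(1/\xx[e]),
\]
so that the conclusions \cref{eq:weight-2,eq:entropy-3,eq:degree-4} assert concentration of $W_\emptyset(i)$, $H(i)$ and $D_S(i)$ around the explicit trajectories $\hat W(i)=(n/k)(1-ki/n)^k$, $\hat H(i)=h(\xx)(1-ki/n)^k$ and $\hat D_S(i)=\deg_G(S)(1-ki/n)^{k-|S|}$. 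These trajectories come from the heuristic in which each vertex survives past step $i$ with probability $\approx 1-ki/n$ (independently), so an edge containing $S$ survives with probability $\approx (1-ki/n)^{k-|S|}$.

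The one-step expected change is straightforward. Given the history $\calH_{i-1}$, the edge $e(i)$ is selected with probability $\xx[e(i)]/W(i-1)$; an edge $e\supseteq S$ is removed exactly when $e(i)\cap(e\setminus S)\ne\emptyset$, so inclusion--exclusion gives
\[
    \Pr[e\text{ dies}\mid\calH_{i-1}]=\frac{1}{W(i-1)}\Bigl(\sum_{v\in e\setminus S}W_{\{v\}}(i-1)-\text{lower-order overcount}\Bigr),
\]
where the overcount involves the $W_T(i-1)$ for $|T|\ge 2$. Under a bootstrap assumption that every $W_T$ is currently within an $n^{-c}$-relative factor of its trajectory, this probability evaluates to $\bigl((k-|S|)/(n/k-i+1)\bigr)\bigl(1+O(n^{-c})\bigr)$, which matches the logarithmic derivative of $\hat D_S$ at time $i-1$. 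An analogous computation for $W$ and $H$ gives the common relative rate $-k/(n/k-i+1)$.

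To promote these trend statements to concentration, I would consider the normalised processes $\tilde Y(i)=Y(i)/(1-ki/n)^{k-s}$ (with $s=|S|$ for $Y=D_S$ and $s=0$ for $Y\in\{W,H\}$), which become approximate martingales modulo an $O(n^{-1-c})$ per-step relative drift. Each increment is controlled by $|W(i)-W(i-1)|\le O(1)$, $|D_S(i)-D_S(i-1)|\le O(n^{k-|S|-1})$, and $|H(i)-H(i-1)|\le O(\log n)$ (using $\ln(1/\xx[e])=O(\log n)$, which follows from $n^c$-well-distributedness). Applying Freedman's inequality with a stopping time equal to the first step at which any tracked process strays by more than $n^{-c}$ (in relative terms) from its target, and union-bounding over the $O(n^{k-1})$ possible $S$, shows that this stopping time exceeds $(1-n^{-c})n/k$ with probability $1-\exp(-\Omega(n^{c'}))$ for some $c'>0$. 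Finally \cref{eq:M-1} follows from the concentration of $W$ itself: whenever $i\le(1-n^{-c})n/k$, we have $W(i)\ge \hat W(i)/2\ge n^{-O(1)}>0$, so $G(i)$ still contains positive-weight edges and the process must continue.

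The main obstacle is closing the bootstrap. Each trend computation for a $D_S$ (or $H$) invokes concentration of the $W_T$'s, and vice versa; moreover the induced per-step error, coming from both the martingale fluctuation and the trend approximation, must be $o(1/n)$ in relative terms so as not to accumulate over $\Theta(n)$ steps. This forces $c$ to be a sufficiently small absolute constant depending only on $k$. The entropy quantity $H$ is the most delicate to control because its increments involve logarithms of the weights; fortunately, the $n^c$-well-distributedness of $\xx$ keeps these logarithms of size $O(\log n)$, so Freedman's bound for $H$ is essentially as sharp as for the other tracked quantities.
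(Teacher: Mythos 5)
Your overall plan---track degree, weight, and entropy statistics along the process, derive one-step expected changes, and close the bootstrap via Freedman's inequality applied to supermartingales---is precisely the strategy the paper uses (there the tracked quantities are $f_v$, $d_S$, and $a$, organised by the power of $p(i)=(n/k-i)/(n/k)$ governing their anticipated trajectory). However, there is a genuine gap in how you propose to close the bootstrap. You propose to stop the process the first time any tracked quantity deviates by a \emph{fixed} relative factor $n^{-c}$ from its trajectory, and then argue the per-step drift is $o(1/n)$ so it cannot accumulate. This cannot be made to work: if every tracked quantity is within relative error $\delta$, the one-step drift of the normalised process $\tilde Y(i)=Y(i)/p(i)^r$ has magnitude $\Theta(\delta/(n/k-i))$ relative to $Y(0)$ (not $\Theta(\delta/n)$; the relevant time-scale is $n/k-i$, which shrinks to $n^{1-c}/k$ by the end of the run). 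Summed over $i$ up to $(1-n^{-c})n/k$, this accumulates to $\Theta(\delta\log n)$, which exceeds $\delta$; no choice of $c$ removes the $\log n$ factor, so the stopping time will be hit by drift alone well before the process ends. The standard and necessary fix is a \emph{time-growing} tolerance band. The paper takes $\varepsilon(i)=p(i)^{-C}n^{-1/4}$ with $C=1/(8c)$ large, and works with the supermartingales $F^{\pm}q(i)=\pm q(i\wedge T)/q(0)\mp(1\pm\varepsilon(i\wedge T))p(i)^{r}$; the key is that $-\Delta(\varepsilon p^r)(i)$ contributes a negative term that dominates the $O(\varepsilon(i)p^{r-1}(i)/n)$ error precisely because $C$ is large (see \cref{eq:Deltaepr}). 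You should replace your fixed tolerance with a widening one of this type.

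Two smaller issues. First, your one-step increment bounds for $W$ and $H$ are stated as $O(1)$ and $O(\log n)$, but with an $n^{c}$-well-distributed $\xx$ one can only guarantee $O(n^{c})$ and $O(n^{c}\log n)$ respectively (deleting one edge removes up to $O(n^{k-1})$ edges, each with weight up to $n^{c}/n^{k-1}$). These larger bounds are still adequate for Freedman, but the constants matter for choosing $c$. Second, you propose to bootstrap relative concentration of $W_T$ for every $|T|\ge 2$ in order to control the inclusion--exclusion overcount; this is both unnecessary and problematic (for $T$ with small $\deg_G(T)$, relative concentration is simply not attainable). It suffices, as the paper does, to bound the overcount deterministically by well-distributedness: the total weight of edges meeting a given set in two or more vertices is at most $\binom{k}{2}\binom{n}{k-2}\cdot n^{c}/n^{k-1}=O(n^{c-1})$, which is negligible compared to the main term $\Theta(p(i)^{k-1})$. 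Similarly, you should only track $D_S$ for $S$ with $\deg_G(S)\ge n^{k-|S|-c}$; for smaller $\deg_G(S)$ the conclusion \cref{eq:degree-4} holds vacuously because the additive error $n^{k-|S|-c}$ dominates.
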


To prove \cref{lem:greedy-matching-analysis}, we will use an approach popularised by Bohman, Frieze and Lubetzky~\cite{BFL10,BFL15}: we anticipate the trajectories of the relevant statistics, and use the difference between the actual and anticipated trajectories, to define supermartingales to which we can apply an exponential tail bound for martingales, namely \emph{Freedman's inequality}\footnote{Freedman's inequality was originally stated for martingales, but it also holds for
supermartingales with the same proof.}~\cite[Theorem~1.6]{Fre75}. The statement of Freedman's inequality is as follows, writing $\Delta X(i)$ for the one-step change $X(i+1)-X(i)$.
\begin{lemma}
\label{lem:freedman}Let $X(0),X(1),\dots$ be a supermartingale\footnote{Recall that a process $(X(i))_i$ is a \emph{supermartingale} with respect to a filtration $(\mathcal F_i)_i$ if $\mathbb E[\Delta X(i)\,|\,\mathcal F_i]\le 0$ for all $i$.} with
respect to a filtration $(\mathcal{F}_{i})_{i}$. Suppose that $|\Delta X(i)|\le K$
for all $i$, and let $V(i)=\sum_{j=0}^{i-1}\mathbb{E}\left[(\Delta X(j))^{2}\,\middle|\,\mathcal{F}_{j}\right]$.
Then for any $t,v>0$,
\[
\Pr\left[X(i)\ge X(0)+t\mbox{ and }V(i)\le v\mbox{ for some }i\right]\le\exp\left(-\frac{t^{2}}{2(v+Kt)}\right)\,.
\]
\end{lemma}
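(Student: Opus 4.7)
The plan is to deploy the standard exponential supermartingale machinery that underlies all Bernstein-, Bennett-, and Freedman-type concentration inequalities. Specifically, I will construct a one-parameter family of exponential supermartingales $\{Z_\lambda\}_{\lambda > 0}$, apply Doob's optional stopping theorem at the hitting time of the event in question, and finally optimize over $\lambda$.

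For each $\lambda > 0$, define
\[
Z_\lambda(i) \coloneqq \exp\!\left(\lambda\bigl(X(i) - X(0)\bigr) - c(\lambda)\, V(i)\right), \qquad c(\lambda) \coloneqq \frac{e^{\lambda K} - 1 - \lambda K}{K^2},
\]
so that $Z_\lambda(0) = 1$. Since the increment $V(i+1) - V(i) = \mathbb{E}[(\Delta X(i))^2 \mid \mathcal{F}_i]$ is $\mathcal{F}_i$-measurable, the supermartingale property for $Z_\lambda$ reduces to showing
\[
\mathbb{E}\!\left[e^{\lambda \Delta X(i)} \,\middle|\, \mathcal{F}_i\right] \le \exp\!\left(c(\lambda)\,\mathbb{E}\!\left[(\Delta X(i))^2 \,\middle|\, \mathcal{F}_i\right]\right).
\]
This in turn follows from the pointwise bound $e^y \le 1 + y + c(\lambda)\, y^2/\lambda^2$, valid for all $y \le \lambda K$ by the classical global monotonicity of $\psi(y) \coloneqq (e^y - 1 - y)/y^2$, which is transparent from the integral representation $\psi(y) = \int_0^1 (1-s)\, e^{sy}\, ds$. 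Applying this bound pointwise with $y = \lambda \Delta X(i)$ (allowed since $\Delta X(i) \le K$), taking conditional expectations, and then using the supermartingale hypothesis $\mathbb{E}[\Delta X(i) \mid \mathcal{F}_i] \le 0$ together with $1 + a \le e^a$ yields the required inequality.

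Next, let $\tau \coloneqq \inf\{i : X(i) \ge X(0) + t \text{ and } V(i) \le v\}$ (with $\tau = \infty$ if the event never occurs); this is a stopping time since $V(i)$ is $\mathcal{F}_{i-1}$-measurable. On $\{\tau < \infty\}$ the definition of $\tau$ forces $Z_\lambda(\tau) \ge \exp(\lambda t - c(\lambda) v)$. Optional stopping applied to the bounded stopping time $\tau \wedge n$ gives $\mathbb{E}[Z_\lambda(\tau \wedge n)] \le 1$, and Markov's inequality followed by sending $n \to \infty$ (with monotone convergence of $\{\tau \le n\} \uparrow \{\tau < \infty\}$) gives
\[
\Pr[\tau < \infty] \le \exp\!\left(-\lambda t + c(\lambda) v\right).
\]

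The final step is optimization. The Bennett choice $\lambda = K^{-1}\ln(1 + tK/v)$ simplifies the exponent to $-(v/K^2)\,\Psi(tK/v)$, where $\Psi(u) \coloneqq (1+u)\ln(1+u) - u$. Converting this Bennett-type bound into the stated Freedman form comes down to the elementary inequality $\Psi(u) \ge u^2/(2(1+u))$ for $u \ge 0$, which I would verify by checking that both sides vanish together with their first derivatives at $u = 0$, while $\Psi''(u) = 1/(1+u)$ dominates $(u^2/(2(1+u)))'' = 1/(1+u)^3$ throughout $u \ge 0$. Plugging $u = tK/v$ then yields $\Pr[\tau < \infty] \le \exp(-t^2/(2(v+Kt)))$, as required. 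No step here is a genuine obstacle; the only mildly delicate point is handling the sign ambiguity of $\lambda \Delta X(i)$ in the exponential bound, which is precisely what the global monotonicity of $\psi$ is designed to address.
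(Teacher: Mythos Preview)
Your proof is correct and is essentially the classical argument due to Freedman. However, the paper does not actually prove \cref{lem:freedman}: it simply quotes the result from \cite[Theorem~1.6]{Fre75} (noting in a footnote that the supermartingale version follows by the same proof). So there is nothing to compare against; you have supplied a full proof where the paper only gives a citation. The exponential-supermartingale construction, the optional-stopping step, and the Bennett-to-Bernstein reduction via $\Psi(u)\ge u^2/(2(1+u))$ are exactly the standard route and match Freedman's original argument.
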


We remark that, for the graph case ($k=2$), Cuckler and Kahn~\cite{CK-counting} also analysed a random greedy process, though their analysis was much more complicated. Partially this is due to their consideration of Hamilton cycles as well as perfect matchings, but it is also due to the fact that the field of random (hyper)graph processes has become much more mature (with more streamlined methods available) since \cite{CK-counting} first appeared.

\begin{proof}[Proof of \cref{lem:greedy-matching-analysis}]
Instead of specifying the value of $c$ at the outset, we will assume
that $c$ is sufficiently small to satisfy certain inequalities that
arise in few places throughout the proof.

It is convenient to ``freeze'' the process instead of aborting it at the point where no positive-weight edges remain: recalling that $G(M)$ is the first graph in the sequence with no positive-weight edges, we artificially define $G(i)=G(M)$ for $i\ge M$. 

\medskip\noindent\textbf{Tracked statistics.} First, we define an ensemble of statistics
that we will keep track of as the process evolves.

For a set $S$ of at most $k-1$ vertices, we write $\Gamma_{S}(i)$ for the collection of
sets $U$ of $k-|S|$ vertices such that $S\cup U$ is an edge
of $G(i)$. For each vertex $v$ we define a process $f_{v}$ by
\[
f_{v}(i)=\sum_{U\in\Gamma_{v}(i)}\xx[\{v\}\cup U]\,.
\]
Also, for each set $S$ of at most $k-1$ vertices of $G$, such that $\deg_G(S)\ge n^{k-|S|-c}$, define a process
\[
d_{S}(i)=|\Gamma_S(i)|\,.
\]
Finally, define a process $a$ by
\[
a(i)=\sum_{e\in E(G(i))}\xx[e]\ln\left(\frac{1}{\xx[e]}\right).
\]
Note that $E(G(i))=\Gamma_{\emptyset}(G(i))$ (so we can interpret $a(i)$ as a sum over elements of $\Gamma_{\emptyset}(G(i))$).

Note that \begin{equation}f_{v}(0)=1\,,\qquad d_S(0)=\deg_G(S)\ge n^{k-|S|-c}\,,\qquad a(0)=h(\xx)=\Theta(n\ln n)\label{eq:q0}\end{equation} for each appropriate $v,S$ (for the final estimate we used \cref{cla:jensen-entropy}).

We organise all these processes $f_v,d_S,a$ into collections $\mathcal Q_1,\dots,\mathcal Q_{k}$, as follows. Let $\mathcal Q_{r}$ contain the processes of the form $d_S$ with $|S|=k-r$, and in addition put all $n$ processes of the form $f_{v}$ in $\mathcal Q_{k-1}$ and put $a$ in $\mathcal Q_k$. The idea is that every process in $\mathcal Q_r$ can be interpreted as a sum over $r$-sets in $\Gamma_S(i)$, for some $S$ of size $k-r$.

Then, let $\mathcal Q=\mathcal Q_1\cup \dots \cup \mathcal Q_k$. 

\medskip\noindent\textbf{Anticipated trajectories.} Let 
\[
p(i)=\frac{n-ki}{n}= \frac{n/k-i}{n/k} = 1 - \frac{ki}{n}\,.
\]
This is the proportion of vertices that remain in $G(i)$ (unless
the process freezes before then). For $q\in \mathcal Q_r$, we anticipate that $q(i)$ will evolve like $p(i)^rq(0)$,
because it is a sum over $r$-sets of vertices.

\medskip\noindent\textbf{Error tolerances.} 
Let $C=1/(8c)$ and let $\varepsilon(i)=p(i)^{-C}n^{-1/4}$. We wish to show that,
with high probability, for all $1\le r\le k$ 
and $q\in\mathcal{Q}_r$ and all $i\le(1-n^{-c})n/k$,
\[
(1-\varepsilon(i))p(i)^rq(0)\le q(i)\le(1+\varepsilon(i))p(i)^rq(0)\,.
\]
Let $T_{0}$ be the smallest index $i$ such that one of the these
inequalities is violated (let $T_{0}=\infty$ if this never happens). Writing $x\land y=\min(x,y)$, let $T=T_{0}\land(1-n^{-c})n/k$. For $r\le k$ and $q\in\mathcal{Q}_r$, define processes $F^-q,F^+q$ by
\begin{align*}
F^+q(i) & =q(i\land T)/q(0)-(1+\varepsilon(i\land T))p(i)^r,\\
F^-q(i) & =-q(i\land T)/q(0)+(1-\varepsilon(i\land T))p(i)^r.
\end{align*}
Note that
\begin{equation}
    F^+q(0)=F^-q(0)=-n^{-1/4}.\label{eq:starting-diff}
\end{equation}

\medskip\noindent\textbf{The plan.} We want to show that for each $q\in\mathcal{Q}$
and each $s\in\left\{ +,-\right\} $, the process $F^sq$ is a supermartingale,
and then we want to use \cref{lem:freedman} and the union bound to
show that whp each $F^sq$ only takes negative values, i.e., $T_0>(1-n^{-c})n/k$. 

Before doing this, we show that the statement of \cref{lem:greedy-matching-analysis}
follows. Indeed, assume that $T_0>(1-n^{-c})n/k$; we will deduce \cref{eq:M-1,eq:weight-2,eq:entropy-3,eq:degree-4}. 

So, consider $i\le (1-n^{-c})n/k<T_0$. Note that $p(i)>n^{-c}$, so $\varepsilon(i)<n^{-1/8}<n^{-c}$.
For each vertex $v$ and $s\in \{-,+\}$, since $F^sf_{v}(i)<0$ we have $f_v(i)=(1\pm\varepsilon(i))p(i)^{k-1}$, and
\begin{equation}
\sum_{e\in E(G(i))}\xx[e]=\frac{1}{k}\sum_{v\in V(G(i))}f_{v}(i)=\frac{1}{k}\cdot p(i)n\cdot (1\pm\varepsilon(i))p(i)^{k-1}\,,
\label{eq:weight-intermediate}
\end{equation}
which implies \cref{eq:weight-2}.

In particular, since the above expression is positive, whenever $i\le (1-n^{-c})n/k$
there is at least one positive-weight edge in $G(i)$. 
That is to say, the process
cannot freeze until after step $(1-n^{-c})n/k$, i.e., $(1-n^{-c})n/k=T\le M$, which
is \cref{eq:M-1}. For \cref{eq:entropy-3}, 
again consider $i\le T=(1-n^{-c})n/k<T_0$.
Since $F^s a<0$ for each $s\in\{-,+\}$,
recalling \cref{eq:q0} we have
\begin{align*}
\sum_{e\in E(G(i))}\xx[e]\ln\left(\frac{1}{\xx[e]}\right) =a(i)=(1\pm n^{-1/8})\cdot p(i)^{k} h(\xx)
\end{align*}
which implies \cref{eq:weight-2}. Finally, note that \cref{eq:degree-4} trivially holds when $\deg_G(S)\le n^{k-|S|-c}$. For any other $S\subseteq V(G(i))$, using that $F^sd_S(i)<0$, we see that 
\[\deg_{G(i)}(S)=|\Gamma_S(i)|=(1\pm n^{-1/8})p(i)^{k-|S|}\deg_{G(i)}(S)\,,\]
which implies \cref{eq:degree-4}.

\medskip\noindent\textbf{Expected and worst-case differences.} We now estimate the
expected and worst-case per-step differences $\Delta f_{v}(i),\Delta a(i),\Delta d_S(i)$. (Recall that $\Delta X(i)$ means $X(i+1)-X(i)$). 

Fix a set $S$ of at most $k-1$ vertices, and consider $U\in\Gamma_{S}(i)$, for $i<T$. The only
way we can have $U\notin\Gamma_{S}(i+1)$ is if $e(i)$ intersects
$U$. For each vertex $u\in U$, the total weight of edges in $G(i)$
which intersect $u$ is $f_{u}(i)=(1\pm\varepsilon(i))p(i)^{k-1}$ (since we are assuming $i<T\le T_0$). 
The total weight of the edges which intersect more than one vertex in
$U$ is at most $\binom{k}{2}\binom{n}{k-2}(n^{c}/n^{k-1})=O(n^{-1+c})$ 
, recalling
that $\xx$ is $n^{c}$-well-distributed.

So, the total weight of edges intersecting $U$ in $G(i)$ is
\[
(1\pm\varepsilon(i))(k-|S|)p(i)^{k-1}-O(n^{c-1})=(1\pm O(\varepsilon(i)))(k-|S|)p(i)^{k-1}\,.
\]
On the other hand, \cref{eq:weight-intermediate} gives an expression
for the total weight of all edges in $G(i)$. Dividing these two quantities
by each other, we see that 
\[
\Pr\left[U\notin\Gamma_{S}(i+1)\,\middle|\, G(1),\dots,G(i)\right]=(1\pm O(\varepsilon(i)))\frac{k-|S|}{p(i)\cdot n/k}\,.
\]
Now, for $1\le r\le k$ and $q\in \mathcal Q_r$, 
we have $q(i)=(1\pm \varepsilon(i))p(i)^rq(0)$ (still assuming that $i<T$). Recall that $q(i)$ can be interpreted as a sum over elements of $\Gamma_S(i)$, for some $S$ of size $k-r$, so by linearity of expectation,
\begin{align}
\mathbb E\left[\Delta q(i)\,\middle|\, G(1),\dots,G(i)\right] & =-(1\pm O(\varepsilon(i)))\frac{rp(i)^{r-1}}{n/k}q(0)\nonumber \\
&=-(1\pm O(\varepsilon(i)))p(i)^rq(0)\cdot \frac{r}{p(i)\cdot n/k}\nonumber\\
 & =-\frac{rp(i)^{r-1}}{n/k}q(0)\pm O\left(\frac{\varepsilon(i)p(i)^{r-1}}{n}q(0)\right)\,.\label{eq:EDeltaf}
\end{align}
Note also that we have the deterministic bound 
\begin{equation*}
|\Delta f_{v}(i)|\le k\binom{n}{k-2}(n^{c}/n^{k-1})=O(n^{c-1})\,,
\end{equation*}
using the $n^{c}$-well-distributedness of $\xx$, and the fact that deleting the
$k$ vertices of an edge results in at most $k\binom{n}{k-2}$ edges
of $\Gamma_{v}(i)$ being deleted. Similarly, we have 
\begin{equation*}
|\Delta a(i)|\le k\binom{n}{k-1}\left((n^{c}/n^{k-1})\ln\left(\frac{1}{n^{-c}/n^{k-1}}\right)\right)=O(n^{c}\ln n)\,.
\end{equation*}
and
\begin{equation*}
|\Delta d_S(i)|\le k\binom{n}{k-|S|-1}=O(n^{k-|S|-1})\,.
\end{equation*}
Recalling \cref{eq:q0}, for all $q\in \mathcal Q$ we deduce
\begin{equation}|\Delta q(i)|\le O(n^{c-1})\cdot q(0)\,.\label{eq:Deltaq}\end{equation}

\medskip\noindent\textbf{Computations on the trajectories and tolerances.} In order
to estimate expressions of the form $\Delta F^sq$ (for $q\in\mathcal{Q}$
and $s\in\{-,+\}$), we also need some control over expressions of
the form $\Delta p^r$ and $\Delta(\varepsilon p^r)$ (where we write $p^{r}$ for the function $i\mapsto p(i)^{r}$ and we write $\varepsilon p^{r}$ for the function $i\mapsto\varepsilon(i)p(i)^{r}$). The
required computations are basically a calculus exercise.

For any $r=O(1)$, we have
\begin{align}
\Delta p^{r}\left(i\right) & =\left(1-\frac{i+1}{n/k}\right)^{r}-\left(1-\frac{i}{n/k}\right)^{r}\nonumber \\
 & =\left(1-\frac{i}{n/k}\right)^{r}\left(\left(\frac{n/k-i-1}{n/k-i}\right)^{r}-1\right)\nonumber \\
 & =p^{r}(i)\left(\left(1-\frac{1}{n/k-i}\right)^{r}-1\right)\nonumber \\
 & =p^{r}(i)\left(-\frac{r}{n/k-i}+O\left(\frac{1}{(n/k-i)^{2}}\right)\right)\nonumber \\
 & =-\frac{rp^{r-1}(i)}{n/k}\left(1+O\left(\frac{p(i)}{n}\right)\right)\nonumber \\
 & =-\frac{rp^{r-1}\left(i\right)}{n/k}+o\left(\frac{\varepsilon(i)p^{r-1}(i)}{n}\right)\,,\label{eq:Deltapr}
\end{align}
where we used the fact that $p(i)/\varepsilon(i) = o(n)$ in the last inequality (with lots of room to spare) and we deduce
\begin{align}
\Delta(\varepsilon p^{r})(i) & =n^{-1/4}\Delta p^{r-C}(i)\nonumber \\
 & \ge n^{-1/4}\frac{(C-r-o(1))p^{r-C-1}\left(i\right)}{n/k}\nonumber \\
 & =\frac{(C-r-o(1))\varepsilon\left(i\right)p^{r-1}\left(i\right)}{n/k}\,.\label{eq:Deltaepr}
\end{align}

\medskip\noindent\textbf{Verifying the supermartingale property.} We are assuming $c$
is small (so $C=1/(8c)$ is large). So, for any $r\le k$, $q\in \mathcal Q_r$ and
$i<T$, combining \cref{eq:EDeltaf,eq:Deltapr,eq:Deltaepr} we see
that
\begin{align*}
\mathbb E\left[\Delta F^+q(i)\,\middle|\, G(1),\dots,G(i)\right] & =\mathbb E\left[\Delta q(i)\,\middle|\, G(i)\right]/q(0)-\Delta p^{r}(i)-\Delta(\varepsilon p^{r})(i)\le 0\,.
\end{align*}
 and similarly
\[
\mathbb E\left[\Delta F^-q(i)\,\middle|\, G(1),\dots,G(i)\right]=-\mathbb E\left[\Delta q(i)\,\middle|\, G(i)\right]/q(0)+\Delta p^{r}(i)-\Delta(\varepsilon p^{r})(i)\le0\,.
\]
We have proved that each $F^sq$ is a supermartingale.

With a similar calculation, we see
\begin{equation}
    \mathbb E\left[\vphantom\sum|\Delta F^sq(i)|\,\middle|\,G(1),\dots,G(i)\right]\le O(1/n)\,.\label{eq:EabsDeltaq}
\end{equation}
for all $q\in \mathcal Q$ and $s\in\{-,+\}$.

\medskip\noindent\textbf{Applying Freedman's inequality.} Combining \cref{eq:Deltaq}, \cref{eq:Deltapr,eq:Deltaepr},
we see that (with
probability 1), for each $q\in \mathcal Q$ and $s\in\{-,+\}$,
\[
|\Delta F^sq|\le O(n^{c-1}).
\]
So, using \cref{eq:EabsDeltaq},
\[
\mathbb E\left[(\Delta F^sq(i))^{2}\,\middle|\, G(1),\dots,G(i)\right]\le O(n^{c-1})\cdot\mathbb E\left[\vphantom\sum|\Delta F^sq(i)|\,\middle|\,G(1),\dots,G(i)\right]\le O(n^{c-2})\,.
\]
Since $T\le n/k$ (and $\Delta F^sq(i)=0$ for $i\ge T$) this
yields
\[
\sum_{i=0}^{\infty}\mathbb E\left[(\Delta F^sq(i))^{2}\,\middle|\,G(1),\dots,G(i)\right]=O(n^{c-1})\,.
\]
Applying \cref{lem:freedman} with $t=-F^sq(0)=n^{-1/4}$ 
(recalling \cref{eq:starting-diff}) and $v=O(n^{c-1})$
then gives
\[
\Pr\left[F^sq(i)\ge 0\mbox{ for some }i\right]
\le\exp\left(-\Omega\left(\frac{(n^{-1/4})^{2}}{n^{c-1}+n^{c-1}\cdot n^{-1/4}}\right)\right)
=\exp(-n^{1/4})\,.
\]
The desired result then follows from a union bound over $q\in\mathcal{Q}$
-and $s\in\{-,+\}$.
\end{proof}

\subsection{Deducing the lower bound in \texorpdfstring{\cref{thm:num-of-PM}}{Theorem~\ref{thm:num-of-PM}}}\label{sec:random-greedy-deduction}
Now we prove the lower bound in \cref{thm:num-of-PM}, using \cref{lem:greedy-matching-analysis,lem:exist-normal-entropy}.

\begin{proof}[Proof of the lower bound in \cref{thm:num-of-PM}]
    Let $\mathcal{S}(G)$ be the set of \emph{sequences }of disjoint edges
$(e(1),\dots, \allowbreak e(n/k))$ (i.e, perfect matchings with an ordering on
their edges). Our objective is to prove that 
\begin{equation}
\ln\left(|\mathcal{S}(G)|\right)=h(G)+\frac{n}{k}\ln\left(\frac{n}{k}\right)-n + o(n)\,,\label{eq:hope-entropy}
\end{equation}
Recalling that $\Phi(G)$ denotes the number of perfect matchings in $G$, we have $|\calS(G)|=(n/k)!\Phi(G)$. So, given \cref{eq:hope-entropy}, the desired result on $\Phi(G)$ will follow, using Stirling's approximation $\ln((n/k)!)=(n/k)\ln(n/(ek))+o(n)$.

\medskip\noindent\textbf{The plan.} Let $c$ be as in \cref{lem:greedy-matching-analysis}. By \cref{lem:exist-normal-entropy}, there is an $n^{c}$-well-distributed
fractional perfect matching $\xx$ with 
\[
h(\xx)\ge h(G)-O(n^{1-c/(3k)})=h(G)-o(n)\,.
\]

We will consider the random greedy matching process in \cref{def:random-greedy}, guided
by the fractional perfect matching $\xx$. By \cref{lem:greedy-matching-analysis}, this almost always
produces a sequence of disjoint edges $e(1),\dots,e(I)$, for some
$I=(1-o(1))n/k$, which can be completed to an ordered perfect matching
$(e(1),\dots,e(n/k))$ (by \cref{eq:degree-4}, and the fact that $G$ is a $(d,\gamma)$-Dirac
$k$-graph). That is to say, the random greedy matching process gives
rise to a probability distribution on $\mathcal{S}(G)$, and the entropy
of this probability distribution gives a lower bound on $\ln\left(|\mathcal{S}(G)|\right)$
(recalling \cref{cla:unif-maximizes}). We will estimate the desired entropy using the chain
rule (\cref{cla:chain-rule}), and \cref{eq:entropy-3} and \cref{eq:weight-2}. (The details of this are a bit fiddly, as
we need to handle the small probability that the random greedy matching process fails to satisfy the conclusion of \cref{lem:greedy-matching-analysis}). 

\medskip\noindent\textbf{Random greedy matching.} Let $I=(1-n^{-c/(2k)})n/k$. We run
the random greedy matching process defined in \cref{def:random-greedy} for $I$ steps to
produce a sequence of disjoint edges $e(1),\dots,e(M\land I)$, where
$M$ is the first step for which $G(M)$ has no positive-weight edges (here we write $x\land y=\min(x,y)$).
If $M<I$, we artificially set $e(i)=*$ for $M\land I<i\le I$.

Let $p(i)=(n/k-i)/(n/k)$ and let $\mathcal{E}$ be the event that the
estimates in \cref{eq:M-1,eq:weight-2,eq:entropy-3,eq:degree-4} (in \cref{lem:greedy-matching-analysis}) hold for the first $I$ steps. In particular, this
implies that $M\ge I$, and it implies that for $i\le I$,
\begin{align}
\sum_{e\in E(G(i))}\xx[e] & =(1\pm n^{-c})p(i)^{k}\frac{n}{k}\,,\label{eq:weight-use-1}\\
\sum_{e\in E(G(i))}\xx[e]\ln\left(\frac{1}{\xx[e]}\right) & \ge(1\pm n^{-c})p(i)^{k}h(\xx)\ge p(i)^{k}(h(G)-o(n))\,,\label{eq:entropy-use-2}\\
\delta_{d}(G(i)) & \ge p(i)^{k-d}\delta_{d}(G)-n^{k-|S|-c}\ge(\alpha_{d}(k)+\gamma/2)\binom{p(i)n-d}{k-d}\,.\nonumber 
\end{align}
When $\mathcal{E}$ occurs, the definition of $\alpha_{d}(k)$ implies
that $G(i)$ has a perfect matching, meaning that we can extend $e(1),\dots,e(I)$
to a sequence of $n/k$ disjoint edges $e(1),\dots,e(n/k)$. Using \cref{cla:unif-maximizes}, this implies that
\[
\ln\left(|\calS(G)|\right)\ge H\left(e(1),\dots,e(I)\,\middle|\,\mathcal{E}\right)\,.
\]

\medskip\noindent\textbf{Removing the conditioning.} Now, let $I_{\mathcal{E}}$ be
the indicator random variable for the event $\mathcal{E}$, and let
$\mathcal{E}^{c}$ be the complement of $\mathcal{E}$. 
Observe that $H(I_{\mathcal E}\mid e(1),\dots, e(I)) = 0$ since the outcome of $e(1),\dots, e(I)$ completely determines $I_{\mathcal E}$. 
Thus, on the one hand, by \cref{cla:chain-rule}, we have 
$$H(e(1),\dots,e(I),I_{\mathcal E}) = H(e(1),\dots, e(I)) + H(I_{\mathcal E}\mid e(1),\dots, e(I)) = H(e(1),\dots, e(I))\,.$$ 
On the other hand, using \cref{cla:chain-rule} again, we have
\begin{align*}
H(e(1),\dots,e(I), I_{\mathcal E}) 
&=H(I_{\mathcal{E}})+H\left(e(1),\dots,e(I)\,\middle|\, I_{\mathcal{E}}\right)\\
 & =H(I_{\mathcal{E}})+\Pr[\mathcal{E}]\cdot H\left(e(1),\dots,e(I)\,\middle|\,\mathcal{E}\right)+\Pr[\mathcal{E}^{c}]\cdot H\left(e(1),\dots,e(I)\,\middle|\,\mathcal{E}^{c}\right).
\end{align*}
By \cref{lem:greedy-matching-analysis}, we have $\Pr[\mathcal{E}]=1-n^{-\omega(1)}$. Also note
that $H\left(e(1),\dots,e(I)\,\middle|\,\mathcal{E}^{c}\right)\le\ln\left(\binom{n}{k}!\right)=O(n^{k}\ln n)$.
So, we deduce
\begin{equation}
H\left(e(1),\dots,e(I)\,\middle|\,\mathcal{E}\right)\ge H(e(1),\dots,e(I))-o(n)\,.\label{eq:entropy-conditional}
\end{equation}
So, it suffices to study $H(e(1),\dots,e(I))$. 

\medskip\noindent\textbf{Computing the entropy.} By \cref{cla:chain-rule} we have
\begin{equation}
H(e(1),\dots,e(I))=\sum_{i=0}^{I-1}H\left(e(i+1)\,\middle|\, e(1),\dots,e(i)\right)\,.\label{eq:chain-rule-use}
\end{equation}
Consider any $i\le I$, and consider some possible outcomes $e_{1},\dots,e_{i}$
of $e(1),\dots,e(i)$. By \cref{cla:unif-maximizes}, we always have the crude bound
\[
H\left(e(i+1)\,\middle|\, e(1)=e_{1},\dots,e(i)=e_{i}\right)\le\ln\left(\binom{n}{k}\right)=O(\ln n)\,.\label{eq:crude-entropy}
\]
On the other hand, suppose that $e_{1},\dots,e_{i}\ne*$. Then these
outcomes of $e(1),\dots,e(i)$ determine an outcome $G_{i}$ of $G(i)$;
let $\mu_{i}=\sum_{e\in E(G_{i})}\xx(e)\le(1+o(1))p(i)^{k}(n/k)$
(here we are using~\eqref{eq:weight-2}) and note that if $\mu_{i}>0$ then (by the definition of the greedy matching process)
\[
H\left(e(i+1)\,\middle|\, e(1)=e_{1},\;\dots,\;e(i)=e_{i}\right)=\sum_{e\in E(G_{i})}\frac{\xx[e]}{\mu_{i}}\ln\left(\frac{\mu_{i}}{\xx[e]}\right)=\frac{1}{\mu_{i}}\sum_{e\in E(G_{i})}\xx[e]\ln\left(\frac{1}{\xx[e]}\right)+\ln\mu_{i}\,.
\]
If $G(i)=G_{i}$ satisfies \cref{eq:weight-use-1,eq:entropy-use-2},
then we compute
\[
H\left(e(i+1)\,\middle|\, e(1)=e_{1},\;\dots,\;e(i)=e_{i}\right)=\frac{h(G)}{n/k}+\ln\left(\frac{n}{k}\right)+k\ln\left(\frac{n/k-i}{n/k}\right)-o(1)\,.
\]
Combining this with \cref{eq:crude-entropy}, and recalling that $\Pr[\mathcal E^c]=n^{-\omega(1)}$ (by \cref{lem:greedy-matching-analysis}), we have
\begin{align*}
H\left(e(i+1)\,\middle|\, e(1),\dots,e(i)\right) & \ge\Pr[\mathcal{E}]\left(\frac{h(G)}{n/k}+\ln\left(\frac{n}{k}\right)+k\ln\left(\frac{n/k-i}{n/k}\right)-o(1)\right)+\Pr[\mathcal{E}^{c}]\cdot O(\ln n)\\
 & \ge\frac{h(G)}{n/k}+\ln\left(\frac{n}{k}\right)+k\ln\left(\frac{n/k-i}{n/k}\right)-o(1)\,.
\end{align*}
Now, recall \cref{eq:chain-rule-use}. Recalling that $I=(1-n^{-c/(2k)})n/k$,
and computing 
\[
\sum_{i=1}^{I}k\ln\left(\frac{n/k-i}{n/k}\right)\ge k\ln\left(\frac{(n/k)!}{(n/k)^{n/k}}\right)= -n+o(n)
\]
using Stirling's approximation, we have
\[
H(e(1),\dots,e(I))=h(G)+\frac{n}{k}\ln\left(\frac{n}{k}\right)-n+o(n)\,.
\]
Via \cref{eq:chain-rule-use}, this implies the desired bound \cref{eq:hope-entropy}.
\end{proof}

\section{Upper bound in 
\texorpdfstring{\cref{thm:num-of-PM}}{Theorem~\ref{thm:num-of-PM}}
}\label{section:entropy-ub}
In this section we prove the upper bound in \cref{thm:num-of-PM}, as a consequence of the following technical theorem of Kahn~\cite[Theorem~4.2]{Kahn-Shamir}
(which is essentially a hypergraph generalisation of the main result of \cite{CK-entropy}).
We introduce some notation that will be used throughout the section:
fix $k\ge2$, let $G$ be a $k$-graph and let $f$
be a perfect matching in $G$. 
We implicitly regard $f$ as a set of edges.
\begin{itemize}
\item Let $\Lambda=(k-1)n/k$.
\item For a vertex $v$, let $f_{v}$ be the edge of $f$ containing $v$,
and let $f(v)= f_{v}\setminus\{v\}$.

\item For a vertex $v$, a perfect matching $f$, and a $(k-1)$-set of
vertices $Y$, let 
\[
T(v,f,Y)=\left\{ B\in f:B\ne f_{v},\,B\cap Y\ne\emptyset\right\} 
\]
and $\tau(v, f,Y)=|T(v,f,Y)|$.
\item For a vertex $v$ and a $(k-1)$-set of vertices $Y$, let $\Gamma_{v}(Y)$
be the set of perfect matchings $f$ satisfying $\tau(v,f,Y)<k-1$.
\end{itemize}
Also, given a (not necessarily uniform) random perfect matching $\mathbf f$ in $G$: for a vertex $v$ and a $(k-1)$-set of vertices $Y$, define $p_{v}(Y)=\Pr[\mathbf f(v)=Y]$ and $\gamma_{v}(Y)=\Pr[\mathbf{f}\in\Gamma_{v}(Y)]$.

\begin{theorem}\label{thm:technical-kahn}
Fix $k\ge2$, let $G$ be a $k$-graph and let $\mathbf{f}$
be a (not necessarily uniformly) random perfect matching in $G$.
Then 
\[
H(\mathbf{f})<\frac{1}{k}\sum_{v\in V(G)}H(\mathbf{f}(v))-\Lambda+O\left(\sum_{v\in V(G)}\sum_{Y}p_{v}(Y)\gamma_{v}(Y)^{1/(k-1)}\right)+O(\ln n)\,,
\]
where, in the third sum, $Y$ ranges over all $(k-1)$-subsets of $V(G)$.
\end{theorem}
In order to deduce the desired bound from \cref{thm:technical-kahn} we use the following auxiliary lemma, appearing as \cite[Lemma~4.3]{Kahn-Shamir}.
\begin{lemma}\label{lem:technical-kahn}
Consider $l,D,t\in \mathbb N$ and $\lambda,\varepsilon > 0$, and $p_{i},\gamma_{i}\in[0,1]$ for $i\in\{1,\dots,l\}$,
satisfying
\begin{itemize}
\item $l=nD,$
\item $\sum_{i=1}^l p_{i}=n$,
\item $n\ln(D)-\sum_{i=1}^l p_{i}\ln(1/p_{i})<\lambda n$
\item $\sum_{i=1}^l \gamma_{i}\le \varepsilon nD$.
\end{itemize}
Then 
\[
\sum_{i=1}^l p_{i}\gamma_{i}^{1/(t-1)}=\delta_{k,\lambda}(\varepsilon)n,
\]
for some $\delta_{k,\lambda}(\varepsilon)>0$ that tends to zero as $\varepsilon\to 0$ (holding $k,\lambda$ fixed).
\end{lemma}
In other words, if we can prove a $O(n)$ bound on the expression in the third bullet point, and an $o(nD)$ bound on the expression in the last bullet point, we deduce a $o(n)$ bound in the final expression.

We now deduce the upper bound in \cref{thm:num-of-PM}.
\begin{proof}
[Proof of the upper bound in \cref{thm:num-of-PM}]Let $\mathbf{f}$ be a uniformly random perfect matching
of $G$, and define a fractional matching $\xx$ by $\xx[e]=\Pr[e\in\mathbf{f}]$.
Then, for each vertex $v$ we have
\[
H(\mathbf{f}(v))=\sum_{e\ni v}\xx[e]\ln\left(\frac{1}{\xx[e]}\right)\,,
\]
so
\[
\sum_{v\in V(G)}H(\mathbf{f}(v))=kh(\xx)\,.
\]
The desired result will follow from \cref{thm:technical-kahn} if we can show that
\begin{equation}
\sum_{v\in V(G)}\sum_{S}p_{v}(S)\gamma_{v}(S)^{1/(k-1)}=o(n)\,,\label{eq:technical-hope}
\end{equation}
where, in the second sum (and all sums over $S$ for the remainder of this proof), $S$ ranges over all $(k-1)$-subsets of $V(G)$.
Note that $p_{v}(S)=0$ when $\{v\}\cup S$ is not an edge of $G$,
so the sum in \cref{eq:technical-hope} can be viewed as a sum over the $k|E(G)|$ choices of $v,S$
such that $\{v\}\cup S$ is an edge. 
Order these $(v,S)$ pairs arbitrarily so that each $1\leq i\leq k|E(G)|$ corresponds to some $(v,S)$ pair.
We will prove \cref{eq:technical-hope} by verifying the conditions in \cref{lem:technical-kahn}, with $t = k$, $l=k|E(G)|$, $D= l/n$, $p_i = p_v(S)$,  $\gamma_i = \gamma_v(S)$, some $\varepsilon=o(1)$ and some $\lambda=O(1)$.

First, note that $\sum_{S}p_{v}(S)=1$
by the definition of $p_{v}(S)$, so $\sum_{v}\sum_Sp_{v}(S)=n$.

Second, writing $D=k|E(G)|/n=\Theta(n^{k-1})$, we also have
\[
\sum_{v\in V(G)}\sum_{S}p_{v}(S)\ln\left(\frac{1}{p_{v}(S)}\right) = kh(\xx) \geq k\ln\left(\Phi(G)\right)\ge(k-1)n\ln n-O(n)=n\ln(D)-O(n)\,.
\]
Indeed, in the first inequality, viewing $\mathbf f$ as a random vector on $\{0,1\}^{E(G)}$, we have
\[
    \ln (\Phi(G)) = H(\textbf{f}) = H(\mathbbm{1}_{e_1\in \mathbf f}, \cdots,\mathbbm{1}_{e_{|E(G)|}\in \mathbf f}) \leq \sum_{e\in E(G)} H(\mathbbm{1}_{e\in \mathbf f}) =  \frac{1}{k}\sum_{v\in V(G)}\sum_{e\ni v} \xx[e]\ln(1/\xx[e])= h(\xx)\,,
\]
which follows from \Cref{cla:drop-conditioning} and \Cref{cla:chain-rule}; in the last inequality, we are using a crude lower bound on $\Phi(G)$ which appears
for example as \cite[Corollary 1.7(i)]{spread-measure} (it can also be easily
deduced from results in \cite{PSS23} or \cite{kelly2023optimal}, or from the lower bound in \cref{thm:num-of-PM} proved in \cref{section:random-greedy} together with \cref{cla:jensen-entropy}). 
Thus, we may take $\lambda = O(1)$.

Then, for any vertex $v$, perfect matching $f$, 
and $(k-1)$-set of vertices $S$, note that $f\in\Gamma_{v}(S)$ if and only if
$\{v\}\cup S$ intersects some edge of $f$ in more than
one vertex. So, for each perfect matching $f$, 
\[
\left|\left\{ (v,S):f\in\Gamma_{v}(S)\right\} \right|\le\frac{n}{k}\binom{k}{2}\binom{n-2}{k-2}k=o(n^{k})\,.
\]
Indeed, there are at most $\frac{n}{k}\binom{k}{2}\binom{n-2}{k-2}$ sets of $k$ vertices which intersect an edge of $f$ in more than one vertex, and there are at most $k$ ways to represent this set of $k$ vertices as $\{v\}\cup S$.
It follows that
\[
\sum_{v\in V(G)}\sum_S\gamma_{v}(S)=\sum_{f}\Pr[\mathbf{f}=f]\cdot\left|\left\{ (v,S):f\in\Gamma_{v}(S)\right\} \right|=o(n^{k})=o(nD)\,,
\]
where the sum on the right-hand side ranges over all perfect matchings $f$ of $G$.
Thus, we may take $\epsilon = o(1)$.
So, \cref{eq:technical-hope} follows from \cref{lem:technical-kahn}.
\end{proof}

\section{Lower bound on \texorpdfstring{\(h(G)\)}{h(G)} when \texorpdfstring{\(d\ge k/2\)}{d≥k/2}}
\label{section:lower-bound-for-upper range}
The goal of this section is to give a tight lower bound on the entropy of a $k$-graph with a given minimum $d$-degree, when $d\ge k/2$. 
Specifically, we will prove the following theorem.
\begin{theorem}\label{thm:lb-entropy}
    For $d,k\in \mathbb N$ satisfying $k/2\leq d\leq k-1$ and any constant $\gamma > 0$, and let $G$ be a $(d,\gamma)$-Dirac $k$-graph on $n$ vertices.
    Then,
    \begin{equation}\label{eqn:lb-entropy}
        h(G) \geq \frac{n}{k} \ln\left(\frac{k}{n}\cdot\frac{\binom{n}{d}}{\binom{k}{d}}\cdot\delta_{d}(G)\right)\,.
    \end{equation}
\end{theorem}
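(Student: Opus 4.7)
The strategy is to exhibit an explicit fractional perfect matching $\xx$ of $G$ whose maximum edge-weight is controlled, and then bound $h(\xx)$ from below by a direct argument.

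\textbf{Reformulating the target.} Using the identity $\binom{n}{d}/\binom{k}{d} = \binom{n}{k}/\binom{n-d}{k-d}$ together with $k\binom{n}{k} = n\binom{n-1}{k-1}$, the right-hand side of \cref{eqn:lb-entropy} can be rewritten as $(n/k)\ln\bigl(p\binom{n-1}{k-1}\bigr)$, where $p \coloneqq \delta_d(G)/\binom{n-d}{k-d} \ge \alpha_d(k) + \gamma \ge 1/2 + \gamma$ (invoking \cref{fact:dirac-bound}).

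\textbf{From bounded weight to entropy bound.} If $\xx$ is a fractional perfect matching of $G$ with $\xx[e] \le L$ for every edge $e$, then for each vertex $v$,
\[
\sum_{e\ni v}\xx[e]\ln(1/\xx[e]) \;\ge\; \ln(1/L)\sum_{e\ni v}\xx[e] \;=\; \ln(1/L).
\]
Summing over $v$ and dividing by $k$ (to account for each edge being counted $k$ times) gives $h(\xx) \ge (n/k)\ln(1/L)$. Thus it suffices to construct a fractional perfect matching $\xx$ with $\xx[e] \le L_0 \coloneqq 1/\bigl(p\binom{n-1}{k-1}\bigr)$ for every $e$.

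\textbf{Constructing the matching.} This is the key step, for which I plan to adapt Cuckler and Kahn's algebraic construction of nearly-uniform fractional perfect matchings in bipartite Dirac graphs~\cite{CK-counting}. By \cref{cla:monotone}, the $d$-degree condition propagates downward, giving $\delta_i(G)/\binom{n-i}{k-i} \ge p$ for every $i \le d$; in particular, $\delta_1(G) \ge p\binom{n-1}{k-1}$, so the target bound $L_0$ is the inverse of the guaranteed minimum vertex degree. To exploit the hypothesis $d \ge k/2$, I would work with the bipartite auxiliary graph $H$ on vertex classes $\binom{V(G)}{d}$ and $\binom{V(G)}{k-d}$, with $(S,T)$ an edge iff $S\cap T=\emptyset$ and $S\cup T \in E(G)$. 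Since $k-d \le d$, \cref{cla:monotone} ensures that both sides of $H$ inherit Dirac-type degree conditions with density at least $p$. Applying CK's bipartite construction to $H$ produces a weight assignment on $E(G)$ with maximum value $\le L_0$, and the remaining task is to calibrate the construction so that the resulting weights satisfy $\sum_{e\ni v}\xx[e] = 1$ for each individual vertex $v\in V(G)$.

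\textbf{Main obstacle.} The most delicate point is this last calibration: a fractional matching of $H$ naturally satisfies ``summation constraints'' at the level of $d$-sets and $(k-d)$-sets, whereas a fractional perfect matching of $G$ requires constraints at the level of individual vertices. Bridging this gap is the heart of the argument, and the condition $d\ge k/2$ is essential here---it ensures that the bipartite side indexed by $d$-sets is ``thick'' enough to allow a symmetrisation (for example, by averaging the construction over random identifications between $d$-set slots and actual vertex labels) that translates set-level constraints into vertex-level ones. This is the main departure from the pure graph case treated by Cuckler and Kahn, and I expect it to require either such a symmetrisation step or a carefully designed explicit weighting that respects both levels of constraint simultaneously.
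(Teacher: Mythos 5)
Your plan is aimed in the right direction---the paper's proof also constructs an auxiliary bipartite graph from $d$-sets and $(k-d)$-sets and applies Cuckler--Kahn's bipartite Theorem~3.1, and your reformulation of the target as $\tfrac nk\ln\bigl(p\binom{n-1}{k-1}\bigr)$ is the same arithmetic. But the step you defer as the ``main obstacle'' is the entire proof, and your sketch of it would not work as stated. Your $H$ on parts $\binom{V(G)}{d}$ and $\binom{V(G)}{k-d}$ has sides of \emph{unequal} sizes whenever $d>k/2$, so Cuckler--Kahn's lemma (stated for a bipartite graph on $n+n$ vertices) does not apply to it; and a fractional perfect matching of $H$ assigns total weight~$1$ to each $d$-set and to each $(k-d)$-set, which is not the per-vertex constraint you need in $G$. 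The role of $d\ge k/2$ is also not a ``thickness'' condition enabling a symmetrisation: it is used via \cref{cla:monotone} only to determine which side of the auxiliary graph is the bottleneck, so that $\delta(\tilde G)=\binom nd\delta_d(G)$ exactly.

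The paper's resolution, which your plan lacks, is a multiplicity trick: take $\tilde A$ to consist of $\binom{n}{k-d}$ identical copies of each $d$-set and $\tilde B$ of $\binom nd$ copies of each $(k-d)$-set, so $|\tilde A|=|\tilde B|=\binom nd\binom n{k-d}$. Then Cuckler--Kahn applies, and the pull-back $\xx[e]=\tfrac1L\sum_{\tilde e\sim e}\tilde\xx[\tilde e]$ with $L=\tfrac kn\binom nd\binom n{k-d}$ satisfies $\sum_{e\ni v}\xx[e]=1$ \emph{automatically}: the $\tilde A$-copies containing $v$ carry total weight $\binom{n}{k-d}\binom{n-1}{d-1}$, the $\tilde B$-copies carry $\binom nd\binom{n-1}{k-d-1}$, and these sum to $L$. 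This bookkeeping is precisely the ``calibration'' you were hoping to achieve by random identifications, and the duplicate counts are chosen so it comes out exactly. Finally, your reduction to a uniform bound $\xx[e]\le L_0$ would be sufficient, but it is not what \cref{lem:non-negative-bipartite} as quoted gives (only an entropy lower bound on $\tilde\xx$, not a max-weight bound); the paper instead transfers the entropy of $\tilde\xx$ to $\xx$ directly via Jensen applied to $t\mapsto t\ln(1/t)$, sidestepping any need for a pointwise weight bound.
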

Note that when $G = K_n^{(k)}$ is the complete $k$-graph on $n$ vertices, this lower bound exactly matches the upper bound given by Jensen's inequality (see \Cref{cla:jensen-entropy}).
We first give the (quick) deduction of \Cref{thm:lb-no-entropy} from \Cref{thm:lb-entropy} and \cref{thm:num-of-PM}.
\begin{proof}[Proof of \Cref{thm:lb-no-entropy} given \Cref{thm:lb-entropy}]
Our goal is to show that
\[\ln \Phi(G) \geq \ln(\Phi_n^{(k)}) + \frac{n}{k} \ln p + o(n)\,.\]
From \Cref{thm:num-of-PM}, we know that 
\[
    \ln \Phi(G) \geq h(G) - (1-1/k)n + o(n)\,.
\]

Applying \Cref{thm:lb-entropy} and recalling $p=\delta_{d}(G)/\binom{n-d}{k-d}$, and noting that $\ln \binom nk=k\ln n+o(1)-\ln(k!)$, we deduce
\begin{align*}
    \ln \Phi(G) &\geq \frac{n}{k} \ln\left(\frac{k}{n}\cdot\frac{\binom{n}{d}}{\binom{k}{d}}\cdot\binom{n-d}{k-d}\cdot p\right) - \left(1-\frac 1 k\right)n + o(n)\\
    &= \frac{n}{k}\ln\biggl(\frac kn\cdot \binom{n}{k}p\biggr) - \left(1-\frac 1 k\right)n + o(n)\\
    &= \frac nk\ln\left(\frac kn\right)+n\ln n-\frac nk\ln(k!)+\frac{n}{k} \ln p - \left(1-\frac 1 k\right)n+ o(n)\,.
\end{align*}
On the other hand, we have \begin{align*}\ln\left(\Phi(K_n^{(k)})\right)=\ln\left(\frac{n!}{(n/k)!\cdot (k!)^{n/k}}\right)&=n\ln \left(\frac n e\right)-\frac n k\ln\left(\frac{n/k}e\right)-\frac nk\ln(k!)\\
&=n\ln n+\frac nk\ln\left(\frac kn\right)-\left(1-\frac 1 k\right)n-\frac nk\ln(k!)\end{align*}
by Stirling's approximation; the desired result follows.
\end{proof}

We will prove \Cref{thm:lb-entropy} by constructing an auxiliary bipartite graph $G$ from our hypergraph $H$, and applying the following result of Cuckler and Kahn \cite[Theorem 3.1]{CK-counting} (proved via a linear-algebraic construction). 

\begin{lemma}\label{lem:non-negative-bipartite}
    Let $G$ be a balanced bipartite graph on $2n$ vertices with minimum degree $\delta(G) \geq n/2$. Then, $h(G) \geq n\ln(\delta(G))$. 
\end{lemma}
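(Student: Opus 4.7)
The plan is to explicitly construct a fractional perfect matching $\xx$ of $G$ whose entropy meets the target $n\ln\delta$, writing $\delta \coloneqq \delta(G)$. The cleanest scenario would be if $G$ contained a fractional $\delta$-regular spanning subgraph, i.e.\ a weight function $y\colon E(G) \to [0,1]$ with $\sum_{e\ni v} y[e] = \delta$ for every vertex $v$: then $\xx \coloneqq y/\delta$ would be a fractional perfect matching, and a direct calculation would give
\[
h(\xx) = \sum_e (y[e]/\delta)\ln(\delta/y[e]) = n\ln\delta - \frac{1}{\delta}\sum_e y[e]\ln y[e] \geq n\ln\delta,
\]
since $y[e]\le 1$ forces each $\ln y[e]\le 0$. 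Unfortunately such a $y$ need not always exist: small $4+4$-vertex irregular examples with minimum degree $2$ can already fail to contain any fractional $2$-regular spanning subgraph (e.g.\ take $a_i\sim b_i,b_4$ for $i=1,2,3$ together with $a_4\sim b_1,b_2,b_3$), so a more flexible construction is required.

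My main approach would be Sinkhorn--Knopp scaling. Since $\delta\ge n/2$ forces (via Hall's theorem) that $G$ has a perfect matching, the bipartite adjacency matrix $A$ of $G$ has positive permanent and therefore admits positive scaling vectors $x,y\in \R^n_{>0}$ such that $X_{ij}\coloneqq A_{ij}\,x_iy_j$ is doubly stochastic. This $X$ is itself a fractional perfect matching of $G$, and using the row/column sum constraints a short calculation gives
\[
h(X) = -\sum_{i=1}^n \ln x_i - \sum_{j=1}^n \ln y_j.
\]
Thus the lemma reduces to the scalar inequality $\prod_i x_i\prod_j y_j \le \delta^{-n}$. For each row $i$ the Sinkhorn constraint reads $\sum_{j\sim i} y_j = 1/x_i$, so AM--GM yields $\prod_{j\sim i} y_j \le (\deg(i)\,x_i)^{-\deg(i)} \le (\delta\,x_i)^{-\deg(i)}$. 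Multiplying over $i\in A$ and rearranging gives the weighted-degree bound $\prod_i x_i^{\deg(i)}\prod_j y_j^{\deg(j)} \le \prod_i \deg(i)^{-\deg(i)}$, with a symmetric inequality from the column side. In the $\delta$-regular case all exponents collapse to $\delta$ and the desired bound is immediate.

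The main obstacle is precisely the passage from this weighted-degree inequality to the clean unweighted bound when vertex degrees vary. I would attack it by showing that the ``excess'' contribution $\prod_i x_i^{\deg(i)-\delta}\prod_j y_j^{\deg(j)-\delta}$ is at most $1$, exploiting the hypothesis $\delta\ge n/2$: this condition forces any two vertices on the same side to share at least $2\delta-n$ common neighbours, which together with the Sinkhorn equations restricts how large any single scaling factor can be relative to its neighbours. If this direct argument turns out to be too delicate, a natural fallback is to ``regularise'' by passing to a bipartite multigraph obtained from $G$ by adding an appropriate number of parallel edges at each vertex so that the result is exactly $\delta$-regular, deduce the target bound there from the easy regular case, and then descend back to $G$ via an averaging step that does not decrease entropy.
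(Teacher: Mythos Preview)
The paper does not actually prove this lemma: it is quoted as \cite[Theorem~3.1]{CK-counting}, where Cuckler and Kahn establish it via an explicit linear-algebraic construction of a fractional perfect matching (not Sinkhorn scaling). So your approach is genuinely different from the one the paper relies on.

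Your Sinkhorn reduction is correct and attractive: the scaled matrix $X$ is a fractional perfect matching, the identity $h(X)=-\sum_i\ln x_i-\sum_j\ln y_j$ is right, and the AM--GM step does give the bound $\prod_i x_i^{\deg(i)}\prod_j y_j^{\deg(j)}\le\prod_i\deg(i)^{-\deg(i)}$, which settles the $\delta$-regular case cleanly. (In fact $X$ is the entropy-maximising fractional perfect matching, so your inequality $\prod_i x_i\prod_j y_j\le\delta^{-n}$ is \emph{equivalent} to the lemma, not merely sufficient for it.)

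The gap is exactly where you flag it, and neither proposed fix is convincing as stated. For route (a), bounding the ``excess'' $\prod_i x_i^{\deg(i)-\delta}\prod_j y_j^{\deg(j)-\delta}$ by $1$ would need, roughly, that Sinkhorn factors at high-degree vertices are at most $1$; nothing in the Sinkhorn equations forces this, and the common-neighbourhood observation from $\delta\ge n/2$ does not by itself pin down which side of $1$ a given $x_i$ lies on. For route (b), ``adding parallel edges so that the result is exactly $\delta$-regular'' is not possible: adding edges can only raise degrees above $\delta$, and your own $4{+}4$ example already shows that $G$ need not contain any fractional $\delta$-regular spanning subgraph, so you cannot reach $\delta$-regularity by deletion either. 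Extending instead to a $\Delta$-regular supergraph gives a bound for the wrong graph with the wrong parameter and does not descend back to $G$. As it stands, the proposal is a correct reduction plus a solved special case, but the general case remains open; you would need either a new idea to control the Sinkhorn factors in the irregular case, or to fall back on the Cuckler--Kahn construction.
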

\begin{proof}[Proof of \Cref{thm:lb-entropy}]
    Using the $k$-graph $G$, we construct a bipartite graph $\tilde{G}$, with parts $\tilde A$ and $\tilde B$, as follows. First, $\tilde A$ consists of all subsets of $d$ vertices of $V(G)$, each duplicated exactly $\binom{n}{k-d}$ times, and $\tilde B$ consists of all subsets of $(k-d)$ vertices of $V(G)$, each duplicated $\binom{n}{d}$ times. This means that
    \[|\tilde A|=|\tilde B|= \binom{n}{d}\binom{n}{k-d}\,.\]
    Then, in $\tilde G$ we put an edge between $U\in \tilde A$ and $W\in \tilde B$ whenever $U\cup W$ forms an edge in $G$.
    Thus, each edge $e\in E(G)$ leads to $\binom{k}{d}\binom{n}{d}\binom{n}{k-d}$ corresponding edges in $E(\tilde{G})$. We write $\tilde{e} \sim e$ to denote that an edge $\tilde{e}\in E(\tilde G)$ arose from an edge $e\in E(G)$.

    Since $\tilde B$ has $\binom nd$ copies of each set of $k-d$ vertices, in the graph $\tilde G$ each $U\in \tilde A$ has degree at least $\binom{n}{d}\delta_d(G)$, and each $W\in \tilde B$ has degree at least $\binom{n}{k-d}\delta_{k-d}(G)$. Since $d\geq k/2$, \cref{cla:monotone} implies that $\binom{n}{d}\delta_d(G) \leq \binom{n}{k-d}\delta_{k-d}(G)$, and therefore 
    \begin{equation}\delta(\tilde{G}) = \binom{n}{d}\delta_d(G)\,.\label{eq:min-degree-translation}\end{equation}
Using \cref{fact:dirac-bound}, we compute
     \[
        \delta(\tilde{G})
        = \binom{n}{d}\delta_d(G) \geq (1/2+\gamma)\binom{n}{d}\binom{n-d}{k-d} 
        \geq (1/2)\binom{n}{d}\binom{n}{k-d} \ge \tilde n/2\,,
    \]
    writing $\tilde n=|\tilde A|=|\tilde B|$. That is to say, $\tilde{G}$ is a balanced bipartite graph on $2\tilde n$ vertices with minimum degree at least $\tilde n/2$.
    
    By \Cref{lem:non-negative-bipartite}, there exists a fractional perfect matching $\tilde{\xx}$ in $\tilde{G}$ satisfying
    \begin{equation}\label{eqn:large-entropy}
          \sum_{e\in E(\tilde{G})}\tilde{\xx}[e] \ln(1/\tilde{\xx}[e]) \ge \tilde n\ln(\delta(\tilde G))\,.
    \end{equation}
    
    Now, define
    \[L\coloneqq \frac{k}{n}\binom{n}{d}\binom{n}{k-d},\quad \xx[e] = \sum_{\tilde{e}\sim e} \tilde{\xx}[\tilde{e}]/L\text{ for each edge }e\in E(G)\,.\]
    We will show $\xx\in \R^m$ is a fractional perfect matching of $G$, certifying \cref{eqn:lb-entropy}.

First, it is a straightforward calculation to see that $\xx$ is a fractional perfect matching: certainly each $\xx[e]$ is nonnegative, and for every $v\in V(G)$ we have
    \begin{align*}
        \sum_{e\ni v}\xx[e] 
        &= \frac{1}{L}\sum_{e\ni v}\sum_{\tilde{e}\sim e} \tilde{\xx}[\tilde{e}] 
        = \frac{1}{L} \left(\sum_{\substack{U\in \tilde A: \\ v\in U}} \sum_{\substack{\tilde{e}\in E(\tilde{G}):\\ U\in \tilde{e}}} \tilde{\xx}[\tilde{e}] + \sum_{\substack{W\in\tilde B: \\ v\in W}}\sum_{\substack{\tilde{e}\in E(\tilde{G}):\\ W\in \tilde{e}}} \tilde{\xx}[\tilde{e}] \right)
        = \frac{1}{L}\left(\sum_{\substack{U\in \tilde A: \\ v\in U}} 1 + \sum_{\substack{W\in\tilde B: \\ v\in W}} 1\right)\\
        &= \frac{1}{L} \biggl(\binom{n}{k-d}\binom{n-1}{d-1} + \binom{n}{d}\binom{n-1}{k-d-1}\biggr) = 1\,,
    \end{align*}
    recalling that $\tilde A$ contains $\binom n{k-d}$ copies of each $d$-set of vertices, and $\tilde B$ contains $\binom nd$ copies of each $(k-d)$-set of vertices.
    It remains to lower bound the entropy of $\xx$.

Let $Q=\binom{k}{d}\binom{n}{d}\binom{n}{k-d}=\binom{k}{d}\tilde n$ be the number of edges
$\tilde{e}\in E(G)$ arising from each $e\in E(G)$. We compute
\begin{align*}
h(\xx) = \sum_{e\in E(G)}\xx[e]\ln(1/\xx[e]) &=  \sum_{e\in E(G)} \left(\frac{1}{L}\sum_{\tilde{e}\sim e}\tilde{\xx}[\tilde{e}]\right) \ln \left(\frac{L}{\sum_{\tilde{e}\sim e}\tilde{\xx}[\tilde{e}]}\right)\\
 & =\frac{\ln(L/Q)}{L}\sum_{e\in E(G)}\sum_{\tilde{e}\sim e}\tilde{x}[\tilde{e}]+\frac{Q}{L}\sum_{e\in E(G)}\left(\frac{\sum_{\tilde{e}\sim e}\tilde{x}[\tilde{e}]}{Q}\right)\ln\left(\frac{Q}{\sum_{\tilde{e}\sim e}\tilde{x}[\tilde{e}]}\right)\\
 & \ge\frac1 L\ln\left(\frac kn\cdot \frac 1{\binom kd}\right)\sum_{\tilde{e}\in E(\tilde{G})}\tilde{x}[\tilde{e}]+\frac{Q}{L}\sum_{e\in E(G)}\sum_{\tilde{e}\sim e}\frac{1}{Q}\tilde{x}[\tilde{e}]\ln\left(\frac{1}{\tilde{x}[\tilde{e}]}\right)\\
 & \ge\frac1 L\ln\left(\frac kn\cdot \frac 1{\binom kd}\right)\tilde{n}+\frac{1}{L}\tilde{n}\ln(\delta(\tilde G))\\
 & \ge \frac{n}{k}\ln\left(\frac{k}{n}\cdot\frac{\binom{n}{d}}{\binom{k}{d}}\cdot\delta_{d}(G)\right)\,,
\end{align*}
where for the third line we used Jensen's inequality with the concave function $t\mapsto t\log(1/t)$, for the
fourth line we used \cref{eqn:large-entropy}, and for the last line we used \cref{eq:min-degree-translation}.
\end{proof}


\begin{thebibliography}{}
\bibitem{AFHRRS}
N.~Alon, P.~Frankl, H.~Huang, V.~R\"{o}dl, A.~Ruci\'{n}ski, and B.~Sudakov, \emph{Large matchings in uniform hypergraphs and the conjecture of {E}rd{\H{o}}s and {S}amuels}, J. Combin. Theory Ser. A \textbf{119} (2012), no.~6, 1200--1215.

\bibitem{BFL10}
T.~Bohman, A.~Frieze, and E.~Lubetzky, \emph{A note on the random greedy triangle-packing algorithm}, J. Comb. \textbf{1} (2010), no.~3-4, 477--488.

\bibitem{BFL15}
T.~Bohman, A.~Frieze, and E.~Lubetzky, \emph{Random triangle removal}, Adv. Math. \textbf{280} (2015), 379--438.

\bibitem{Bondy-book}
J.~A. Bondy, \emph{Basic graph theory: paths and circuits}, Handbook of combinatorics, {V}ol. 1, 2, Elsevier Sci. B. V., Amsterdam, 1995, pp.~3--110.

\bibitem{CT-entropy}
T.~M. Cover and J.~A. Thomas, \emph{Elements of information theory}, Wiley Series in Telecommunications, John Wiley \& Sons, Inc., New York, 1991, A Wiley-Interscience Publication.

\bibitem{CK-entropy}
B.~Cuckler and J.~Kahn, \emph{Entropy bounds for perfect matchings and {H}amiltonian cycles}, Combinatorica \textbf{29} (2009), no.~3, 327--335.

\bibitem{CK-counting}
B.~Cuckler and J.~Kahn, \emph{Hamiltonian cycles in {D}irac graphs}, Combinatorica \textbf{29} (2009), no.~3, 299--326.

\bibitem{Ferber-counting}
A.~Ferber, L.~Hardiman, and A.~Mond, \emph{Counting {H}amilton cycles in {D}irac hypergraphs}, Combinatorica \textbf{43} (2023), no.~4, 665--680.

\bibitem{FKS-counting-hypergraph}
A.~Ferber, M.~Krivelevich, and B.~Sudakov, \emph{Counting and packing {H}amilton {$\ell$}-cycles in dense hypergraphs}, J. Comb. \textbf{7} (2016), no.~1, 135--157.

\bibitem{FM-Dirac-threshold}
A.~Ferber and M.~Kwan, \emph{Dirac-type theorems in random hypergraphs}, J. Combin. Theory Ser. B \textbf{155} (2022), 318--357.

\bibitem{Erdos-matching-conj}
P.~Frankl and A.~Kupavskii, \emph{The {E}rd{\H{o}}s matching conjecture and concentration inequalities}, J. Combin. Theory Ser. B \textbf{157} (2022), 366--400.

\bibitem{Fre75}
D.~A. Freedman, \emph{On tail probabilities for martingales}, Ann. Probability \textbf{3} (1975), 100--118.

\bibitem{Stefan-counting}
S.~Glock, S.~Gould, F.~Joos, D.~K\"{u}hn, and D.~Osthus, \emph{Counting {H}amilton cycles in {D}irac hypergraphs}, Combin. Probab. Comput. \textbf{30} (2021), no.~4, 631--653.

\bibitem{Absorbing}
H.~H\`{a}n, Y.~Person, and M.~Schacht, \emph{On perfect matchings in uniform hypergraphs with large minimum vertex degree}, SIAM J. Discrete Math. \textbf{23} (2009), no.~2, 732--748.

\bibitem{Kahn-Shamir}
J.~Kahn, \emph{Asymptotics for {S}hamir's problem}, Adv. Math. \textbf{422} (2023), Paper No. 109019, 39.

\bibitem{spread-measure}
D.~Y. Kang, T.~Kelly, D.~K{\"u}hn, D.~Osthus, and V.~Pfenninger, \emph{Perfect matchings in random sparsifications of dirac hypergraphs}, Combinatorica (2024).

\bibitem{kelly2023optimal}
T.~Kelly, A.~Müyesser, and A.~Pokrovskiy, \emph{Optimal spread for spanning subgraphs of dirac hypergraphs}, 2023.

\bibitem{PSS23}
H.~T. Pham, A.~Sah, M.~Sawhney, and M.~Simkin, \emph{A toolkit for robust thresholds}, 2023.

\bibitem{RR-survey}
V.~R\"{o}dl and A.~Ruci\'{n}ski, \emph{Dirac-type questions for hypergraphs---a survey (or more problems for {E}ndre to solve)}, An irregular mind, Bolyai Soc. Math. Stud., vol.~21, J\'{a}nos Bolyai Math. Soc., Budapest, 2010, pp.~561--590.

\bibitem{RRS09}
V.~R\"{o}dl, A.~Ruci\'{n}ski, and E.~Szemer\'{e}di, \emph{Perfect matchings in large uniform hypergraphs with large minimum collective degree}, J. Combin. Theory Ser. A \textbf{116} (2009), no.~3, 613--636.

\bibitem{SSS}
G.~N. S\'{a}rk\"{o}zy, S.~M. Selkow, and E.~Szemer\'{e}di, \emph{On the number of {H}amiltonian cycles in {D}irac graphs}, Discrete Math. \textbf{265} (2003), no.~1-3, 237--250.

\bibitem{TZ12}
A.~Treglown and Y.~Zhao, \emph{Exact minimum degree thresholds for perfect matchings in uniform hypergraphs}, J. Combin. Theory Ser. A \textbf{119} (2012), no.~7, 1500--1522.

\bibitem{TZ13}
A.~Treglown and Y.~Zhao, \emph{Exact minimum degree thresholds for perfect matchings in uniform hypergraphs {II}}, J. Combin. Theory Ser. A \textbf{120} (2013), no.~7, 1463--1482.

\bibitem{Zhao-survey}
Y.~Zhao, \emph{Recent advances on {D}irac-type problems for hypergraphs}, Recent trends in combinatorics, IMA Vol. Math. Appl., vol. 159, Springer, [Cham], 2016, pp.~145--165.

\bibitem{Count-oriented-trees}
Felix Joos and Jonathan Schrodt.
\newblock Counting oriented trees in digraphs with large minimum semidegree.
\newblock {\em J. Combin. Theory Ser. B}, 168:236--270, 2024.

\end{thebibliography}
\end{document}